\title{Tensor rank of the direct sum of two copies of $2 \times 2$ matrix multiplication tensor is 14.}
\author{Filip Rupniewski\footnote{\nolinkurl{filip.rupniewski@unibe.ch}, 
Mathematical Institute, University of Bern, Alpeneggstrasse 22, 3012 Bern, Switzerland}}
\date{\today}
\newcommand*{\fullref}[1]{\hyperref[{#1}]{\autoref*{#1}: \nameref*{#1}}}
\begin{document}

\allowhyphens
\selectlanguage{english}

\maketitle

\begin{abstract}
The article is concerned with the problem of the additivity of the tensor rank.
That is for two independent tensors we study when the rank of their direct sum is equal to the sum of their individual ranks.
The statement saying that additivity always holds was previously known as Strassen's conjecture (1969) 
until Shitov proposed counterexamples (2019). They are not explicit and only known to exist asymptotically for very large tensor spaces.
In this article, we show that for some small three-way tensors the additivity holds.
For instance, we give a proof that another conjecture stated by Strassen (1969) is true. It is the particular case of the general Strassen's additivity conjecture where tensors are a pair of $2 \times 2$ matrix multiplication tensors. 
In addition, we show that the Alexeev-Forbes-Tsimerman substitution method preserves the structure of a direct sum of tensors.
\end{abstract}

\medskip
{\footnotesize
\noindent\textbf{Keywords:}
tensor rank, additivity of tensor rank, Strassen's conjecture, slices of tensor, secant variety.\\
\noindent\textbf{AMS Mathematical Subject Classification 2020:}
15A69, 
14N07, 
15A03. 

\section*{Acknowledgments}

First of all, I would like to express my deep gratitude to Jarosław Buczyński, for the introduction to the topic and numerous discussions and explanations. 

I would like to thank 
Maciej Gałązka,
Joachim Jelisiejew, 
Joseph Landsberg, 
Tomasz Mańdziuk,
Mateusz Micha{\l}ek and 
Elisa Postinghel
for their valuable comments.

Finally, I would like to thank the National Science Center for supporting my research (project number 2019/33/N/ST1/00068).

\addtolength{\headheight}{6mm}

\section{Introduction}\label{ch:introduction}
\begin{notation}\label{n:ABCabc}
Let $A, A', A'', B, B', B'', C, C', C''$ be finitely dimensional vector spaces over a field $\kk$ of dimensions $\bfa, \bfa',\bfa'',\bfb, \bfb',\bfb'',\bfc, \bfc',\bfc''$ respectively.  
\end{notation}

We consider order 3 tensors, i.e. tensors $p \in A \otimes B \otimes C$. 
The classical question 
is how complicated they are, i.e. what is their tensor rank?

\begin{definition}\label{d:3way_tensor_rank}
 The subset $W \subseteq A \otimes B \otimes C $ is of rank $R(W)=r$ if and only if $r$ is a minimal number such that there exists a tensor decomposition of length $r$, i.e. there exist $\{a_1,a_2,...,a_{r}\} \subseteq A$, $\{b_1,b_2,...,b_{r}\} \subseteq  B$ and $\{c_1,c_2,...,c_{r} \} \subseteq C$ such that $W$ belongs to the linear span $\langle a_1 \otimes b_1 \otimes c_1, a_2 \otimes b_2 \otimes c_2, \ldots, a_r \otimes b_r \otimes c_r \rangle$. In case $W$ consist of just one tensor, we recover the classical definition of tensor rank.
 Tensors of rank 1 are called \emph{simple tensors}.
\end{definition}

The tensor decomposition, also known as \emph{Canonical Polyadic Decomposition} and \emph{CANDECOMP/PARAFAC (CP)} tensor decomposition, can be considered to be higher order generalizations of the matrix singular value decomposition (SVD).  In the analogy to the analyzing complicated data coming from physical world, the rank should correspond to the number of simple ingredients affecting our complicated state.

The interest in the subject has expanded to other fields. Examples include signal processing \cite{signal_processing}, numerical linear algebra \cite{numerical_linear_algebra}, coding theory \cite{BNRS19}, computer vision \cite{computer_vision}, numerical analysis \cite{numerical_analysis}, data mining \cite{data_mining}, graph analysis \cite{graph_analysis}, neuroscience \cite{neuroscience}, and more. 
Tensors and tensor rank appear also in physics. 
In quantum mechanics, the rank of a tensor is a measure of degree of entanglement. 
Detailed introduction to problems of pure states entanglement and connection with variants of tensor rank is contained in \cite{BFZ20}.
More about tensor decomposition one can read in the survey \cite{Kolda-survey}. 
Motivations to study tensor rank are included in for instance \cite{comon_tensor_decompositions_survey}, \cite{landsberg_tensorbook},
   \cite{carlini_grieve_oeding_four_lectures_on_secants} and references therein.

For a matrix, i.e. an order two tensor, the rank of a tensor and rank of a matrix coincide. 
The computation of the matrix rank is usually obtained by applying the Gaussian elimination process. 
A classical result says that it is  computable in a polynomial time \cite[p.12]{farebrother}.
 There are numerical algorithms to decompose a tensor \cite{Tlab3deco, TTB}. However, 
 in contrast to matrix rank, there is no effective algorithm calculating the rank of a given tensor. Hastad \cite{Hastad} proved that the tensor rank is NP-hard to compute.
Since an upper bound is obtained by exhibiting a decomposition, the main challenge is to find lower bounds for the rank. 

One of known methods which apply in general is the Alexeev-Forbes-Tsimerman substitution method \cite{alexeev_forbes_tsimerman_Tensor_rank_some_lower_and_upper_bounds}. 
It gives a lower bound for $R(p)$ by a rank of another tensor $\tilde{p}$. 
For coordinate free rephrasement see \cite{BPR} and Propositions \ref{proposition_for_AFT_method_coordinate_free}, \ref{proposition_for_AFT_method_slice_A}. 
After fixing a basis for each vector spaces, the tensor $p$ is a 3-dimensional array. One obtains $\tilde{p}$ from $p$ after distinguishing a slice $M$ of $p$ and adding $w_i M$ to the $i$-th slice of $p$, where $w_i \in \kk$ are some uknown weights.
In addition, if the choosen slice is of rank one then the method gives the precise difference between ranks of both tensors. 
We show that in this case also the structure of a direct sum of two tensors is preserved and the „hook” structure of tensors does not change as well (see Proposition \ref{proposition_for_AFT_method_slice_A_improved} and Corollary \ref{c:removing_rank_1_slice}).

 Our main interest is the question about additivity of tensors.  More precisely, given two tensors from independent tensor spaces, when is the rank of their direct sum equal the sum of their ranks?
 The statement of the Strassen's conjecture was that additivity always holds
 \cite{strassen_vermeidung_von_divisionen}, 
 Shitov \cite{shitov_counter_example_to_Strassen} disproved it, but did not give an explicit counterexample. One way to find an explicit one is to analyze small dimensional cases. 

 In this article we give some sufficient conditions for additivity and improve results from \cite{BPR}. The following theorem summarizes our main outcomes.
 \begin{thm}\label{thm_additivity_rank_intro_filip}
 With Notation \ref{n:ABCabc},
  assume $p' \in A' \otimes B' \otimes C'$ and $p'' \in A'' \otimes B'' \otimes C''$ and let
  \[
      p = p' \oplus p'' \in (A'\oplus A'') \otimes (B'\oplus B'') \otimes(C'\oplus C'').
  \]
  If at least one of the following conditions holds, 
     then the additivity of the rank holds for $p$, that is $R(p) = R(p') + R(p'')$:
  \begin{enumerate}
   \item $p''\in A'' \otimes (B''\otimes \kk^1 + \kk^2 \otimes C'')$ (this part of the statement is valid for any base field $\kk$).
   \label{it:4_thm_additivity_rank_intro_filip}
    \item $\kk=\CC$ and the pair $((\bfa', \bfb', \bfc'), (\bfa'', \bfb'', \bfc''))$ equals either $((4,4,3),(4,4,3))$ or $((4,4,3),(4,3,4))$,
    \label{it:5_thm_additivity_rank_intro_filip}
    \item $\kk=\CC$ and 
both tensors have ranks less or equal 7. 
    In particular, $R(\mu_{2,2,2} \oplus \mu_{2,2,2})=R(\mu_{2,2,2}) + R(\mu_{2,2,2})$, where $\mu_{2,2,2}$ denotes the $2 \times 2$ matrix multiplication tensor.
    \label{it:6_thm_additivity_rank_intro_filip}
\end{enumerate}
  Analogous statements hold if we exchange the roles of $A$, $B$, $C$ and/or of ${}'$ and ${}''$. 
\end{thm}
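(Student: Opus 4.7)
My plan is to prove the three parts using the substitution method of Propositions~\ref{proposition_for_AFT_method_coordinate_free} and \ref{proposition_for_AFT_method_slice_A_improved} as a common tool, together with the structural Corollary~\ref{c:removing_rank_1_slice} which preserves the direct-sum decomposition whenever a rank-one slice is peeled off. The underlying idea is always the same: assume a decomposition of $p$ shorter than $R(p')+R(p'')$, use the substitution method to strip $p$ down to a smaller tensor still of direct-sum type, and contradict minimality.

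For Part (\ref{it:4_thm_additivity_rank_intro_filip}), I would induct on $\bfa''+\bfb''+\bfc''$. The hypothesis $p''\in A''\otimes(B''\otimes\kk^1+\kk^2\otimes C'')$ is a hook condition: the image of $p''$ in $B''\otimes C''$ is contained in the sum of a one-dimensional ``$C$-strip'' and a two-dimensional ``$B$-strip''. The plan is to exhibit a rank-one slice of $p''$ lying inside this hook and apply Corollary~\ref{c:removing_rank_1_slice} to remove it while keeping both the direct-sum and the hook structure intact, then close by induction. The key point is that the hook assumption forces every $A''$-slice of $p''$ to be a matrix of rank at most two, and such matrices always admit a useful rank-one reduction; the main obstacle is checking that the rank-one slice being peeled off genuinely contributes a full unit to any minimal decomposition, which is exactly what the refined Proposition~\ref{proposition_for_AFT_method_slice_A_improved} provides.

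For Part (\ref{it:5_thm_additivity_rank_intro_filip}), the dimension triple $(4,4,3)$ gives a privileged axis of size three. I would assume a minimal decomposition of $p$ of length strictly less than $R(p')+R(p'')$ and apply the substitution method along the $C$-direction. Either the rank drops by the expected amount at each step, in which case Corollary~\ref{c:removing_rank_1_slice} preserves the direct sum and induction on $\bfc'+\bfc''$ finishes the proof, or a rank-one slice simplification appears prematurely, which by Part (\ref{it:4_thm_additivity_rank_intro_filip}) reduces the problem to a smaller case already handled in \cite{BPR}. I expect this step to consist mostly of a finite case analysis that enumerates how slices of the hypothetical shorter decomposition distribute between $A'$ and $A''$.

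For Part (\ref{it:6_thm_additivity_rank_intro_filip}), the upper bound $R(p'\oplus p'')\leq R(p')+R(p'')\leq 14$ is trivial by concatenation, and for $\mu_{2,2,2}$ it is witnessed by two copies of Strassen's algorithm. For the matching lower bound, I would assume $R(p)<R(p')+R(p'')$ and iterate Part (\ref{it:4_thm_additivity_rank_intro_filip}) on both summands until neither admits any removable rank-one slice; using the rank bound $R(p'),R(p'')\le 7$, the residual tensors are forced into dimensions covered by Part (\ref{it:5_thm_additivity_rank_intro_filip}), yielding the contradiction. For $\mu_{2,2,2}$ itself this is particularly clean, because $\mu_{2,2,2}$ is known to have no rank-one slice and sits in $\CC^4\otimes\CC^4\otimes\CC^4$, so the reduction lands directly inside the $((4,4,3),(4,4,3))$ case. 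The hardest step, which I expect to carry the bulk of the technical work, is controlling ``mixed'' rank-one slices in the ambient space $(A'\oplus A'')\otimes(B'\oplus B'')\otimes(C'\oplus C'')$ that do not come from a rank-one slice of either summand; ruling these out is precisely what the improved substitution method of Proposition~\ref{proposition_for_AFT_method_slice_A_improved} is designed to handle.
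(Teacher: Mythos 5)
Your toolkit is the right one, but at the three places where the actual work happens the proposal either misidentifies the mechanism or relies on a reduction that does not exist.

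For part~\ref{it:4_thm_additivity_rank_intro_filip}, the claim that ``the hook assumption forces every $A''$-slice of $p''$ to be a matrix of rank at most two, and such matrices always admit a useful rank-one reduction'' is not the mechanism and is not even correct: an $A''$-slice of a tensor in $A''\otimes(B''\otimes\kk^1+\kk^2\otimes C'')$ can have rank $3$, and in any case the refined substitution method (Proposition~\ref{proposition_for_AFT_method_slice_A_improved}, Corollary~\ref{c:removing_rank_1_slice}) only controls the rank drop when the slice being removed has rank exactly $1$. The rank-one slices one must use are the $B''$-slices $p''(\beta)$ for $\beta$ vanishing on the distinguished $\kk^2\subset B''$: these kill the $\kk^2\otimes C''$ part and land in $A''\otimes\kk^1$, hence have rank at most one. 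This is exactly Proposition~\ref{p:1_n_hook_shaped}. Moreover your induction has no base case: the slice-removal process terminates once $p''$ has been shrunk to a $(0,2)$-hook shaped tensor, i.e.\ one living in $A''\otimes\kk^2\otimes C''$, which in general admits no further rank-one slices; at that point one must invoke the J\'aJ\'a--Takche theorem (Theorem~\ref{t:jaja_2_0_hook}) to conclude, and your sketch never does.

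For parts~\ref{it:5_thm_additivity_rank_intro_filip} and~\ref{it:6_thm_additivity_rank_intro_filip} the gap is more serious, because the paper's argument is not an iterated slicing argument at all. A concise rank-$7$ tensor in $\CC^4\otimes\CC^4\otimes\CC^3$ need not have any rank-one slice, so your dichotomy (``either the rank drops by the expected amount at each step, or a rank-one slice appears prematurely'') cannot be run: Proposition~\ref{proposition_for_AFT_method_coordinate_free}\ref{item_AFT_exists_choice_droping_rank} applied to a slice of rank $\ge 2$ destroys both the exact rank bookkeeping and the direct-sum structure. What actually carries these cases is the combinatorial splitting of a hypothetical too-short decomposition $V$ into the seven types of Lemma~\ref{lemma_reduction}, the inequalities of Proposition~\ref{prop_projection_inequalities} and Lemma~\ref{lemma_bound_r'_e'_R_w'}, the external fact that the maximal rank in $\CC^4\otimes\CC^4\otimes\CC^3$ is $7$, and the asymmetric conclusion of Lemma~\ref{lem_if_rank_3_more_than_dimension} ($R(W')=\hl+\vl+2$ while $R(W'')=\hr+\vr+1$), which contradicts itself when the roles of ${}'$ and ${}''$ are exchanged (Corollary~\ref{cor_if_p1_p2_the_same_and_rank_3_more_than_dimension}). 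None of this appears in your plan. Finally, for part~\ref{it:6_thm_additivity_rank_intro_filip} your assertion that the $\mu_{2,2,2}$ case ``lands directly inside the $((4,4,3),(4,4,3))$ case'' is false for the reason you yourself state: $\mu_{2,2,2}$ is concise in $\CC^4\otimes\CC^4\otimes\CC^4$ and has no rank-one slice, so nothing reduces its $C$-dimension to $3$. The paper handles the concise $(4,4,4)$, rank-$7$ situation directly, since the hypotheses $\bfa'+3=R(W')=7\le\bfb'+\bfc'=8$ of Corollary~\ref{cor_if_p1_p2_the_same_and_rank_3_more_than_dimension} are still satisfied there; without that lemma your argument has no way to close the $(4,4,4)$ case.
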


 Following \cite{BPR}, we distinguish seven types of matrices from a minimal decomposition and show that to prove the additivity of the tensor rank, one can get rid of two of those types. Buczyński, Postinghel and Rupniewski called the process ``repletion and digestion''. 
   In other words, there is a smaller example, a pair $(\tilde{p_1}, \tilde{p_2})$ without those two types in its minimal decomposition. If the additivity property holds for $(\tilde{p_1}, \tilde{p_2})$, then it also holds for the original pair. 
This was the core observation, which let to prove one of the main results of the article \cite{BPR} (see Theorem~\ref{thm_additivity_rank_intro_jarek}). In this article we review the process and ``replete and digest'' with respect to one distinguished rank one matrix in place of the family of matrices at once (Sections \ref{ss:replete_pairs} and \ref{ss:digestion}). It gives more control over the structure of the outcome. 
 In result, 
    we are able to improve the substitution method and prove additivity in cases written above (Theorem \ref{thm_additivity_rank_intro_filip}).
    In particular, point \ref{it:4_thm_additivity_rank_intro_filip} generalize the proposition of Buczyński, Postinghel and Rupniewski \cite[Prop. 3.17]{BPR} to an arbitrary base field. The proof itself is also easier and shorter.

 It let us 
 answers another open question given in 1969 by Strassen \cite{strassen_vermeidung_von_divisionen} about additivity of a pair of $2 \times 2$ matrix multiplication tensors. We prove that given arbitrary four matrices  $\{ M',N',M'',N'' \} \subseteq \kk^{2 \times 2}$ there is no faster way to calculate both products $M'N'$ and $M''N''$ simultaneously, than doing it independently --- calculating $M'N'$ first and $M''N''$ afterwords.

\section{Overview}\label{s:overview}
Section \ref{s:preliminaries} contains motivation and introduction to the problem of additivity of tensor rank. Following sections (Section \ref{s:ranks_and_slices} and \ref{s:dir_sums}) contains definitions and tools
needed to prove main claims in Section \ref{s:rank_one_matrices_and_additive_rank}. 
In particular, one can find there connection of tensors with the space of matrices, definition of \emph{``hook''-shaped space} and \emph{substitution method}.
To give more information of the structure of the output of the substitution method and to give conditions for the additivity, there is a necessity of the analysis of slices of $(p_1 +p_2)((A'\oplus A'')^*)$. It is located in Section~\ref{s:rank_one_matrices_and_additive_rank} together with introduction of ``repletion'' and ``digestion'' processes with respect to one matrix. In the same section one can find proofs of main theorems.
In particular, there is shown why a pair of $2 \times 2$ matrix multiplication tensor has the rank additivity property, i.e.  $R(\mu_{2,2,2} \oplus \mu_{2,2,2})=R(\mu_{2,2,2}) + R(\mu_{2,2,2})$.

\section{Preliminaries}\label{s:preliminaries}
\subsection{Matrix multiplication}
The standard way to calculate the product of two $2\times2$ matrices
%
uses 8 multiplications and 4 additions. 
In 1969, Strassen presented an algorithm 
using 18 additions, but only 7 multiplications. 
\begin{thm}[Strassen's algorithm for multiplication of two $2 \times 2$ matrices, \cite{strassen_gaussian_elimination_is_not_optimal}]\label{t:strassen_multiplication}
Let $M = (a_{i,j} )$ and $N = (b_{i,j})$ be two $2 \times 2$ matrices and $M N = (c_{i,j})$ be their product. Then calculating 7 products (of numbers):
\begin{align*}
I :=& (a_{1,1} + a_{2,2} )(b_{1,1} + b_{2,2} ); &
II :=& (a_{2,1} + a_{2,2} )b_{1,1} ; &
III :=& a_{1,1} (b_{1,2} - b_{2,2} ); \\
IV :=& a_{2,2} (-b_{1,1} + b_{2,1} ); &
V :=& (a_{1,1} + a_{1,2} )b_{2,2} ; &
VI :=& (-a_{1,1} + a_{2,1} )(b_{1,1} + b_{1,2} );\\
VII :=& (a_{1,2} - a_{2,2} )(b_{2,1} + b_{2,2} );  &
\end{align*}
we can present $(c_{i,j})$ using just their sums:
\[
\begin{bmatrix}
 c_{1,1} & c_{1,2}\\
 c_{2,1} & c_{2,2}
\end{bmatrix}
=
\begin{bmatrix}
 I + IV - V + VII & II + IV\\
 III + V & I + III - II + VI
\end{bmatrix}
\]
\end{thm}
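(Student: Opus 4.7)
The statement is a purely algebraic identity in the $16$ indeterminates $a_{i,j}, b_{k,l}$, so the natural plan is direct verification: expand each of the seven products $I, II, \ldots, VII$ as a sum of degree-two monomials, form the four prescribed linear combinations appearing in the matrix on the right-hand side, and check that each one reduces to the appropriate bilinear form $c_{i,j} = a_{i,1} b_{1,j} + a_{i,2} b_{2,j}$.

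Concretely, each of $I, II, \ldots, VII$ is a product of at most two binomials, hence expands into at most $4$ monomials. I would carry the expansion out once, producing a small table whose rows are indexed by $I$ through $VII$ and whose columns are indexed by the relevant monomials $a_{i,j} b_{k,l}$ appearing anywhere; entries are the $\pm 1$ coefficients. The four identities
\[
c_{1,1} = I+IV-V+VII, \quad c_{2,1} = II+IV, \quad c_{1,2} = III+V, \quad c_{2,2} = I+III-II+VI
\]
then reduce to checking that, in each case, the sum of the corresponding rows of the table has coefficient $1$ on the two ``matched'' monomials $a_{i,1}b_{1,j}$ and $a_{i,2}b_{2,j}$, and $0$ on every other monomial.

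The only real obstacle is bookkeeping: there are many cross-terms ($a_{i,j}b_{k,l}$ with $j \neq k$, or with $j=k$ but $(i,l)$ not matching $(i',j')$) that must cancel. Organizing the check by the table above makes the cancellations transparent and avoids mistakes with signs. I would do one representative case in detail (for example $c_{1,1}$, where all four summands $I, IV, V, VII$ contribute and therefore the cancellation pattern is richest) and simply indicate that the remaining three cases are analogous. Finally, it is worth recording the conceptual content of the identity: it exhibits a rank-$7$ decomposition of the $2\times 2$ matrix multiplication tensor $\mu_{2,2,2}$, establishing the upper bound $R(\mu_{2,2,2}) \leq 7$ that underlies the statement $R(\mu_{2,2,2} \oplus \mu_{2,2,2}) = 14$ in Theorem~\ref{thm_additivity_rank_intro_filip}.
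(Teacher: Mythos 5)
The paper does not actually prove this theorem: it is stated as a citation of Strassen's 1969 paper and used as a black box, so there is no in-paper argument to compare against. Your plan --- expand each of $I,\dots,VII$ into its (at most four) monomials $a_{i,j}b_{k,l}$, tabulate the $\pm1$ coefficients, and check that each of the four linear combinations collapses to $a_{i,1}b_{1,j}+a_{i,2}b_{2,j}$ --- is the standard and essentially only elementary verification, and it succeeds; doing $c_{1,1}$ in full and noting the other cases are analogous is reasonable. One concrete point you should address: the four identities you write down, namely $c_{2,1}=II+IV$ and $c_{1,2}=III+V$, are the \emph{correct} Strassen identities, but they are not what the displayed matrix in the statement literally says --- there the off-diagonal entries are transposed ($c_{1,2}=II+IV$, $c_{2,1}=III+V$). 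Indeed $II+IV=(a_{2,1}+a_{2,2})b_{1,1}+a_{2,2}(-b_{1,1}+b_{2,1})=a_{2,1}b_{1,1}+a_{2,2}b_{2,1}=c_{2,1}$, not $c_{1,2}$. So your verification proves the theorem with the off-diagonal entries corrected; as literally printed the display is a typo (one that recurs in Example~\ref{ex:mu_222}), and you should say explicitly that you are verifying the corrected version rather than silently substituting it. This does not affect the conceptual payoff you record, the rank-$7$ decomposition of $\mu_{2,2,2}$ and hence $R(\mu_{2,2,2})\le 7$.
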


Since the multiplication of two $n \times n$ matrices can be made in blocks, by \emph{divide-and-conquere} approach one can generalize the Strassen’s algorithm to bigger matrices. As a consequence, matrix multiplication can be performed by using on the order of $n^{log_2(7)} \approx n^{2.81}$ arithmetic operations, in contrary to the standard algorithm which uses the order of $n^3$. The natural question is: what is the smallest possible exponent?

In \cite{burgisser_clausen_shokrollahi} authors proved that looking for the answer, we do not need to worry about the number of additions. To be more precise --- the exponent of the order of the required arithmetic operations equals the exponent of the order of the required multiplications. 
From 1990 until 2010 the smallest known exponent was $2.375477$ \cite{CW87}, given by the Coppersmith–Winograd algorithm.
The state of the art is $2.3728639$ \cite{G14}.
The famous conjecture in algebraic complexity theory states that the number is exactly 2 \cite[Subsect. 3.9]{Landsberg_Complexity}. Roughly speaking, it says that as matrices get large, it becomes as easy to multiply them as to add them. For a more detailed description see \cite{Lan17}.

The matrix multiplication is a bilinear map $\mu_{i,j,k}\colon \kk^{i\times j} \times \kk^{j\times k} \to \kk^{i \times k}$. 
   We can interpret $\mu_{i,j,k}$ as a three-way tensor $\mu_{i,j,k}  \in (\kk^{i \times j})^* \otimes (\kk^{j \times k})^*\otimes \kk^{i \times k}$. 
      The question about the minimal number of multiplications required to calculate the product of two matrices $M, N$, 
  for any $M\in \kk^{i \times j}$ and $N \in \kk^{j \times k}$
is the same as 
``what is the \emph{tensor rank} of $\mu_{i,j,k}$?''.

\begin{example}\label{ex:mu_222}
 The $2\times2$ matrix multiplication tensor is $\mu_{2,2,2} =
 (a_{1,1}\otimes b_{1,1} + a_{1,2}\otimes b_{2,1})\otimes c_{1,1} + (a_{1,1}\otimes b_{1,2} + a_{1,2} \otimes b_{2,2})\otimes c_{1,2} + ( a_{2,1}\otimes b_{1,2} + a_{2,2}\otimes b_{2,1})\otimes c_{2,1} + (a_{2,1}\otimes b_{1,2} + a_{2,2}\otimes b_{2,2})\otimes c_{2,2}$.
 
 Using Theorem \ref{t:strassen_multiplication} we can rewrite it as $\mu_{2,2,2} = 
 (a_{1,1} + a_{2,2} )\otimes (b_{1,1} + b_{2,2} )\otimes (c_{1,1}+ c_{2,2}) 
 + (a_{2,1} + a_{2,2} )\otimes b_{1,1} \otimes (c_{1,2} - c_{2,2}) +
a_{1,1} \otimes  (b_{1,2} - b_{2,2}) \otimes (c_{2,1}+ c_{2,2}) +
a_{2,2} \otimes (-b_{1,1} + b_{2,1}) \otimes (c_{1,1}+ c_{1,2}) +
(a_{1,1} + a_{1,2} )\otimes b_{2,2} \otimes (-c_{1,1}+ c_{1,2}) +
(-a_{1,1} + a_{2,1} )\otimes (b_{1,1} + b_{1,2} )\otimes  c_{2,2} +
(a_{1,2} - a_{2,2} )\otimes (b_{2,1} + b_{2,2} )\otimes c_{1,1}
 $ and see that the rank is at most 7.
\end{example}

\subsection{Strassen additivity problem}
   
One of our main interest is the \emph{additivity} of the tensor rank. Given arbitrary four matrices 
  $M'\in \kk^{i' \times j'}$, $N' \in \kk^{j' \times k'}$, $M''\in \kk^{i'' \times j''}$, $N'' \subseteq \kk^{j'' \times k''}$,
  suppose we want to calculate both products $M' N'$ and $M''N''$ simultaneously.
What is the minimal number of multiplications needed to obtain the result? 
Is it equal to the sum of the ranks $R(\mu_{i',j',k'}) + R(\mu_{i'',j'',k''})$?
More generally, the same question can be asked for  arbitrary tensors. 
A positive answer was widely known as Strassen's Conjecture \cite[p.~194, \S4, Vermutung~3]{strassen_vermeidung_von_divisionen}, \cite[Sect.~5.7]{landsberg_tensorbook}.

\begin{definition}\label{def_rank_additivity}
Assume $A = A' \oplus A''$, $B = B' \oplus B''$, and $C = C' \oplus C''$, where all $\fromto{A}{C''}$ are finite dimensional vector spaces over a field $\kk$.
Pick $p' \in A' \otimes B' \otimes C'$ and $p'' \in A'' \otimes B'' \otimes C''$ 
  and let $p= p' + p''$, which we will write as $p= p'\oplus p''$. We say that the pair $p',p''$ has a \emph{rank additivity property} if the following equality holds
\begin{equation}\label{equ_additivity}
  R(p) = R(p') + R(p'').
\end{equation}
\end{definition}

\begin{problem}[Strassen's additivity problem]\label{prob_strassen}
Given $p',p''$ as in the definition \ref{def_rank_additivity} decide if they poses rank additivity property.
\end{problem}

\begin{thm}[Strassen's additivity does not always hold, \cite{shitov_counter_example_to_Strassen}] \label{t:shitov_counter_example}
There exist
$p' \in A' \otimes B' \otimes C'$ and $p'' \in A'' \otimes B'' \otimes C''$, where $A'=A''=...=C'=C''=\CC^n$ and $n\geq 450$ such that 
\begin{equation}
  R(p' \oplus p'') < R(p') + R(p'')?
\end{equation}
\end{thm}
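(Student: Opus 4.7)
The plan is to follow Shitov's approach, since the theorem as stated is essentially his counterexample to Strassen's direct sum conjecture. The starting point would be prior counterexamples to the analogous additivity conjecture for symmetric tensor rank (Waring rank), which were established before Shitov's work by, for instance, Buczy\'nski--Han--Mella--Teitler and related results. The main task is then to lift a symmetric counterexample to the three-way tensor setting while preserving the direct sum structure.

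Concretely, I would proceed as follows. First, fix a pair of homogeneous polynomials $f', f''$ in disjoint sets of variables for which the Waring rank is known to be strictly subadditive, that is, the symmetric rank of $f'+f''$ is strictly less than the sum of the symmetric ranks of $f'$ and $f''$. Next, convert each $f'$ and $f''$ into a three-way tensor $p' \in A'\otimes B' \otimes C'$ and $p''\in A''\otimes B''\otimes C''$ via an appropriate flattening or catalecticant-type construction, so that a decomposition of the three-tensor can be pulled back to a decomposition of the symmetric one, possibly after a controlled modification. The strategy is to then show that any rank decomposition of $p' \oplus p''$ essentially encodes a decomposition of $f'+f''$, and hence inherits the strict inequality. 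Auxiliary padding dimensions can be introduced to ensure that partial structure is forced upon the summands of an optimal decomposition.

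The main obstacle is precisely this translation step: tensor rank can in general be strictly smaller than symmetric rank, so a strict symmetric inequality does not automatically yield a strict asymmetric one. Shitov's innovation is to circumvent this by working in very high dimension and performing a careful dimension count: he compares the dimension of the variety of tensors of a given rank with that of the subvariety of ``additive'' direct-sum tensors, and uses $n$ large enough that the counting gap forces the existence of the desired counterexample. This is also the reason the theorem is non-constructive and requires $n \geq 450$: the estimate has slack, and one needs enough parameters for the dimension inequality to be strict.

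I would expect the hard part to be the quantitative dimension estimate, rather than the qualitative idea. Producing an explicit counterexample, as opposed to an existential one, appears genuinely out of reach with this method, which is consistent with the remark in the introduction that Shitov's examples are only known to exist asymptotically. For the purposes of this paper, one only needs the conclusion of the theorem as a contrast to the positive additivity results proved here, so a reference to Shitov's construction suffices; a self-contained proof would essentially amount to reproducing his argument.
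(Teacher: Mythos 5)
The paper does not prove this statement at all: it is quoted verbatim as an external result of Shitov, and the surrounding text only records that his proof is non-constructive. Your final conclusion --- that a citation to \cite{shitov_counter_example_to_Strassen} suffices and that a self-contained proof would amount to reproducing his argument --- therefore matches exactly what the paper does. However, your sketch of the argument itself misrepresents Shitov's method. His counterexample to the tensor (non-symmetric) direct sum conjecture does not proceed by lifting a prior Waring-rank counterexample through flattenings or catalecticants; it works directly with three-way tensors, using ranks of tensors over polynomial rings and substitution/clone-type arguments to show that a generic tensor, direct-summed with a suitable auxiliary tensor, has rank strictly below the sum. The non-constructiveness and the bound $n\ge 450$ come from the quantitative estimates in that argument, not from a comparison of dimensions of secant-type varieties as you describe. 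If you intend the sketch only as motivation, this is harmless, but as a proof outline the symmetric-to-asymmetric transfer step is exactly the part you flag as an ``obstacle,'' and no mechanism is offered to overcome it --- so the proposal, read as a proof, has a genuine gap there. Read as a justification for citing Shitov, it is fine and consistent with the paper.
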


Shitov did not gave a constructive proof, so there is still work needed to find an explicit example of a pair without rank additivity property.
It is known that if one of the factor vector spaces is small, then the additivity of the tensor rank holds. 
    
\begin{thm}[\cite{jaja_takche_Strassen_conjecture}]\label{t:jaja_2_0_hook}
Using notation from Definition~\ref{def_rank_additivity}, if one of the vector space $A'$, $A''$, $B'$, $B''$, $C'$, $C''$ over an arbitrary field $\kk$ has dimension bounded by 2, then
\begin{equation*}
  R(p' \oplus p'') = R(p') + R(p'').
\end{equation*} 
\end{thm}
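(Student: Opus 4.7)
The upper bound $R(p' \oplus p'') \le R(p') + R(p'')$ is trivial by concatenating individual decompositions of $p'$ and $p''$. For the reverse inequality, by the symmetry of the statement in $\{A',A'',B',B'',C',C''\}$ we may assume $\bfa'' \le 2$, and in fact $\bfa'' \ge 1$ (otherwise $p''=0$).

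Consider first $\bfa''=1$, so $A''=\kk\, a''$ and $p'' = a'' \otimes N$ for some matrix $N \in B'' \otimes C''$ with $R(N) = R(p'')$. Take a minimal decomposition $p = \sum_{i=1}^r a_i \otimes b_i \otimes c_i$, $r=R(p)$, and decompose each $a_i = a'_i + \lambda_i a''$ with $a'_i \in A'$ and $\lambda_i \in \kk$. Contracting with the linear form $a''{}^* \in A^*$ that equals $1$ on $a''$ and vanishes on $A'$ yields $N = \sum_i \lambda_i\, (\pi_{B''} b_i) \otimes (\pi_{C''} c_i)$, a matrix decomposition of length at most $|I|$, where $I = \{i : \lambda_i \ne 0\}$. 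Hence $|I| \ge R(N) = R(p'')$. Conversely, $p' = p - a'' \otimes N = \sum_i a'_i \otimes b_i \otimes c_i$, which after projection to $A' \otimes B' \otimes C'$ becomes $p' = \sum_i a'_i \otimes b'_i \otimes c'_i$. If we knew that $a'_i = 0$ for every $i \in I$, only the $|J|=r-|I|$ terms with $i \notin I$ would contribute, giving $R(p') \le r - R(p'')$ and finishing the case.

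The core of the proof is therefore a modification argument showing that in a minimal decomposition one may indeed assume $a'_i = 0$ for all $i \in I$. Because $p$ has no components in the six ``mixed'' direct summands of $A \otimes B \otimes C$ (such as $A' \otimes B'' \otimes C''$ or $A'' \otimes B' \otimes C''$), the decomposition satisfies six cross-vanishing relations, e.g.\ $\sum_i a'_i \otimes b''_i \otimes c''_i = 0$ and $\sum_{i\in I} \lambda_i\, b'_i \otimes c'_i = 0$. Combined with the minimality of $r$, these relations permit an iterative row-replacement on the decomposition that absorbs any ``mixed'' term ($i \in I$ with $a'_i \ne 0$) into the $J$-terms at no extra cost, or else produces a strictly shorter decomposition of $p$, contradicting minimality. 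Carrying out this absorption cleanly is the hardest part of the argument and the main obstacle.

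The case $\bfa'' = 2$ runs along the same lines but with $p''$ corresponding to a two-dimensional pencil of matrices in $B'' \otimes C''$ obtained by contracting $p$ with a basis of $(A'')^*$. The analogue of the lower bound $|I| \ge R(p'')$ on the set of relevant indices is then supplied by the classical theory of $2$-dimensional matrix pencils (Kronecker normal form), and the modification argument proceeds much as before. The restriction $\bfa'' \le 2$ is essential: only for pencils do the cross-relations supply enough structural control to force the absorption step, which is consistent with Shitov's counterexamples being possible only when all six dimensions grow large.
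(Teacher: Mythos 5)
This theorem is not proved in the paper at all: it is quoted from Ja'Ja'--Takche \cite{jaja_takche_Strassen_conjecture}, and the text only points to the original proof and to an alternative proof via the substitution method (valid over algebraically closed fields) in \cite[Prop.~3.2.12]{rupniewski_mgr}. So the question is whether your argument stands on its own, and it does not: you have written a correct reduction and then explicitly left the crux unproved. In the case $\bfa''=1$ everything up to the sentence ``If we knew that $a'_i=0$ for every $i\in I$'' is fine, but that hypothesis is exactly what has to be established, and your ``modification argument'' is only described in outline (``permits an iterative row-replacement \ldots or else produces a strictly shorter decomposition''), with the admission that ``carrying out this absorption cleanly is the hardest part of the argument and the main obstacle.'' A proof that names its own main step as an unresolved obstacle is a proof plan, not a proof. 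The case $\bfa''=2$, which is where all the genuine difficulty of Ja'Ja'--Takche lives, is deferred entirely to ``runs along the same lines'' plus an appeal to Kronecker pencil theory; no argument is given for how the pencil structure forces the absorption, and over a non-closed field the regular part of a pencil needs separate care.

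Two further remarks. First, the case $\bfa''=1$ can be closed much more cheaply than by an absorption argument: after restricting to the concise supports one may assume $N\in B''\otimes C''$ has full rank $\rho=\bfb''=\bfc''$, so $R(p'')=\rho=\dim p''((B'')^*)$, i.e.\ the gap in the $B$-slicing is zero, and additivity follows from Corollary~\ref{cor_bounds_on_es_and_fs} (equivalently \cite[Prop.~10.3.3.3]{landsberg_tensorbook}). Second, your closing heuristic that ``only for pencils do the cross-relations supply enough structural control'' is plausible but is not a substitute for the missing step. To repair the proposal you would need either to actually carry out the iterative replacement (which is essentially the content of \cite{jaja_takche_Strassen_conjecture}), or to reroute through the machinery the paper does develop, e.g.\ the slice/projection bounds of Lemma~\ref{lemma_bound_r'_e'_R_w'} and Proposition~\ref{prop_SAC_if_E'=0} applied to the two-dimensional factor.
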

 
 See \cite{jaja_takche_Strassen_conjecture} for the original proof     and Section~\ref{sec_hook_shaped_spaces} for a discussion of more recent approaches.

 In the article \cite{BPR} authors address several cases of Problem~\ref{prob_strassen} and its gen\-er\-al\-isa\-tions.
The following theorem summarizes their main results regarding tensor rank.

\begin{thm}[{\cite[Thm. 1.2]{BPR}}]\label{thm_additivity_rank_intro_jarek}
Using notation as in Theorem \ref{thm_additivity_rank_intro_filip}.
  If at least one of the following conditions holds, 
     then the additivity of the rank holds for $p$, that is $R(p) = R(p') + R(p'')$:
  \begin{enumerate}
   \item $R(p'')\le \bfa'' +2$ and $p''$ is not contained in $\tilde{A''} \otimes B'' \otimes C''$
          for any linear subspace $\tilde{A''} \subsetneqq A''$
          (this part of the statement is valid for any base field $\kk$).
          \label{it:1_thm_additivity_rank_intro_jarek}
   \item $\kk=\RR$ (real numbers) or $\kk$ 
            is an algebraically closed field of characteristic $\ne 2$ 
            and $R(p'')\le 6$, 
            \label{it:2_thm_additivity_rank_intro_jarek}
\item $\kk=\CC$ or $\kk=\RR$ (complex or real numbers) and $p''\in A'' \otimes \kk^{3} \otimes \kk^{3}$
   \label{it:3_thm_additivity_rank_intro_jarek}
\end{enumerate}
  Analogous statements hold if we exchange the roles of $A$, $B$, $C$ and/or of ${}'$ and ${}''$. 
\end{thm}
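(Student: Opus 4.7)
The direction $R(p) \le R(p') + R(p'')$ is immediate by concatenating minimal decompositions, so in each case the task is to establish $R(p) \ge R(p') + R(p'')$. My common framework is to fix a minimal decomposition $p = \sum_{i=1}^{R(p)} a_i \otimes b_i \otimes c_i$, split each vector as $a_i = a_i' + a_i''$, $b_i = b_i' + b_i''$, $c_i = c_i' + c_i''$, and analyze the projections onto the eight subspaces $A^\sharp \otimes B^\sharp \otimes C^\sharp$ for $\sharp \in \{',''\}$. The projection onto $A' \otimes B' \otimes C'$ must recover $p'$, the projection onto $A'' \otimes B'' \otimes C''$ must recover $p''$, and the six mixed projections must vanish; these six linear identities provide the leverage to reshape the decomposition. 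The engine driving the induction is the Alexeev--Forbes--Tsimerman substitution method in its slice form (Propositions \ref{proposition_for_AFT_method_coordinate_free} and \ref{proposition_for_AFT_method_slice_A}): peeling off a rank-one slice of $p$ in one of the three directions reduces the corresponding dimension by one and the rank by exactly one.

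For part \ref{it:1_thm_additivity_rank_intro_jarek}, my plan is to induct on $\bfa''$. The $A''$-conciseness hypothesis forces the set $\{a_i'' : 1 \le i \le R(p)\}$ to span $A''$, so at least $\bfa''$ of the summands contribute nontrivially to $p''$; the hypothesis $R(p'') \le \bfa'' + 2$ then allows at most two ``extra'' contributing summands beyond a basis. I would locate a summand whose $A''$-part is a simple basis vector (after a change of basis) and use it to induce a rank-one $A''$-slice, peel it off via the substitution method, and verify that both $R(p)$ and $R(p') + R(p'')$ drop by $1$. The base case $\bfa'' \le 2$ is handled by Theorem \ref{t:jaja_2_0_hook}.

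For parts \ref{it:2_thm_additivity_rank_intro_jarek} and \ref{it:3_thm_additivity_rank_intro_jarek}, the plan is to reduce to part \ref{it:1_thm_additivity_rank_intro_jarek}. If $p''$ is not concise in $A''$, restrict to the minimal subspace $\tilde{A''} \subseteq A''$ with $p'' \in \tilde{A''} \otimes B'' \otimes C''$ (and analogously in $B''$ and $C''$): $R(p)$ and $R(p'')$ are unaffected, so we may assume $p''$ is concise in each factor. For part \ref{it:2_thm_additivity_rank_intro_jarek}, conciseness combined with $R(p'') \le 6$ over $\RR$ or an algebraically closed field of characteristic $\ne 2$ pins $\bfa''$ to a small range, and using the classification of concise tensors of rank at most six (Kruskal-type bounds and normal forms over the specified fields) one verifies $R(p'') \le \bfa'' + 2$, so part \ref{it:1_thm_additivity_rank_intro_jarek} applies. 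For part \ref{it:3_thm_additivity_rank_intro_jarek}, concise tensors in $A'' \otimes \kk^3 \otimes \kk^3$ over $\CC$ or $\RR$ satisfy $\bfa'' \le 9$ and admit explicit structural classifications; the case $\bfa'' \le 3$ is handled by Theorem \ref{t:jaja_2_0_hook}, and for the remaining (finitely many) types one checks that either $R(p'') \le \bfa'' + 2$ holds (reducing to part \ref{it:1_thm_additivity_rank_intro_jarek}) or a rank-one $B''$- or $C''$-slice exists that the substitution method peels off to reduce the configuration.

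The main obstacle is part \ref{it:1_thm_additivity_rank_intro_jarek} in the boundary regime $R(p'') = \bfa'' + 2$: here every application of the substitution method must be tight, so one must guarantee that a genuine rank-one slice is always available during the induction, and must carefully track what happens to ``mixed-type'' summands $a_i \otimes b_i \otimes c_i$ with both $a_i'$ and $a_i''$ nonzero. This is exactly the role of the repletion/digestion toolkit announced in the introduction; verifying that mixed summands can always be exchanged for a pure pair without increasing the rank is the delicate combinatorial step that drives the whole argument.
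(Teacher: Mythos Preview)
This theorem is not proved in the present paper: it is quoted verbatim from \cite[Thm.~1.2]{BPR} as background, and the paper only reviews the surrounding machinery (the matrix-space translation $W'' = p''((A'')^*)$, the overflow subspaces $E',E'',F',F''$, Corollary~\ref{cor_bounds_on_es_and_fs}, Proposition~\ref{prop_SAC_if_E'=0}, Corollary~\ref{cor_if_diff_betw_R_and_dim_Wbis_eq_2_then}) without assembling a self-contained argument. So there is no in-paper proof to compare against; you are attempting something the paper itself does not do.

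That said, the route taken in \cite{BPR}, as one can reconstruct from the exposition here, is quite different from yours for part~\ref{it:1_thm_additivity_rank_intro_jarek}. It does not induct on $\bfa''$. Instead it passes once and for all to $W'' = p''((A'')^*) \subset B''\otimes C''$ (so $\dim W'' = \bfa''$ by $A''$-conciseness), assumes additivity fails, and invokes Corollary~\ref{cor_bounds_on_es_and_fs} to force $\bfe'',\bff'' \le R(W'') - \dim W'' - 1 \le 1$. The case where one of them vanishes is Proposition~\ref{prop_SAC_if_E'=0}; the residual case $\bfe''=\bff''=1$ is then handled by the repletion/digestion and hook-shape analysis (cf.\ Corollary~\ref{cor_if_diff_betw_R_and_dim_Wbis_eq_2_then} and the $(1,2)$-hook discussion). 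No rank-one slice of $p''$ in the $A''$ direction is ever invoked.

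Your inductive scheme has a genuine gap precisely where you suspect. The substitution method (Propositions~\ref{proposition_for_AFT_method_coordinate_free}, \ref{proposition_for_AFT_method_slice_A}) peels off a \emph{slice} $p(\alpha)$, not a \emph{summand} $a_i\otimes b_i\otimes c_i$ of a decomposition; the fact that some $a_i''$ is a basis vector after a change of basis says nothing about the rank of the corresponding slice $p''(\alpha'')$, which is typically $\min(\bfb'',\bfc'')$. In general there is no rank-one matrix in $W''=p''((A'')^*)$ at all (take $W''$ a generic $4$-dimensional subspace of $\CC^4\otimes\CC^4$), so the step ``use it to induce a rank-one $A''$-slice'' cannot be made to work, and the induction stalls immediately. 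The \cite{BPR} approach sidesteps this entirely by bounding the overflow dimensions $\bfe'',\bff''$ of a minimal decomposition of $W$ rather than searching for rank-one elements inside $W''$.
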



In Sections \ref{s:ranks_and_slices} and \ref{s:dir_sums} we introduce notions and recall tools needed to prove our main results in Section \ref{s:rank_one_matrices_and_additive_rank}.

\section{Ranks and slices}\label{s:ranks_and_slices}

This section reviews the notions of rank, slices and conciseness. Readers that are familiar to these concepts may easily skip this section.
The main things to remember from here are Notation~\ref{notation_V_Seg} and 
  Proposition~\ref{prop_bound_on_rank_for_non_concise_decompositions_for_vector_spaces}.

Throughout this article, similarly as in Notation~\ref{n:ABCabc}, let $\fromto{A_1, A_2}{A_d}$, $A$, $B$, $C$, $V$ and $W$ be finite dimensional vector spaces over 
a field $\kk$. By the bold lowercase letters $\bfa_1, \bfa_2,...,\bfa_d, \bfa, \bfb, \bfc,\mathbf{v},\mathbf{w}$ we denote their dimensions.
If $P$ is a subset of $V$,  we denote by $\langle P \rangle$ its linear span. We will use the same notation, i.e. $\langle P \rangle$ for a projective span if $P$ is a subset of classes of points from a projective space $\PP^N$.
If $P=\setfromto{p_1, p_2}{p_r}$ is a finite subset,  we will write
  $\langle\fromto{p_1, p_2}{p_r}\rangle$ rather than  $\langle\setfromto{p_1, p_2}{p_r}\rangle$ to simplify notation. 

  \subsection{Geometry of secants}
To state the generalization of Definition \ref{d:3way_tensor_rank}, we need to observe that the set of simple tensors is naturally isomorphic to the Cartesian product of projective spaces.
The image of the embedding in the tensor space is called the \emph{Segre variety}.

\begin{defin}\label{d:Segre_variety}
 For $A_1,A_2,\ldots A_d$ vector spaces over $ \kk$, the \emph{Segre variety} is defined as the image of the map, called \emph{Segre embedding}:
\begin{align*}
 Seg: \PP A_1 \times \PP A_2 \times \dots \times \PP A_d&\to \PP(A_1 \otimes A_2 \otimes \dots \otimes A_d) \\
 ([a_1],[a_2],\ldots,[a_d]) &\mapsto [a_1 \otimes a_2 \otimes \dots \otimes a_d].
\end{align*}
If there is no risk of confusion we will denote the image by 

\[
  \Seg=\Seg_{A_1,A_2,\ldots A_d} := \PP A_1 \times \PP A_2 \times \dots \times \PP A_d \subset \PP(A_1 \otimes A_2 \otimes \dots \otimes A_d).
\]
\end{defin}

%

 \begin{defin}\label{def_r_linear_space}
   For a projective variety $\Seg_{A_1,A_2,\ldots A_d} \subseteq \PP^{N}$ and $\PP^k \simeq \PP(W)$  a projective linear subspace of $\PP^N$,
      define $R_{\Seg_{A_1,A_2,\ldots A_d}}(W)$ and $R_{\Seg_{A_1,A_2,\ldots A_d}}(\PP W)$, \emph{the rank} of $W$ and \emph{the rank} of  $\PP(W)$ with respect to ${\Seg_{A_1,A_2,\ldots A_d}}$,
      to be the minimal number $r$    
      such that there exist $r$ classes of points $\{\fromto{[s_1], [s_2]}{[s_r]}\} \subset \Seg_{A_1,A_2,\ldots A_d}$ 
     with $\PP(W)$ contained in $\langle\fromto{s_1, s_2}{s_r}\rangle$.
     
To ease the notation for $[p] \in \PP W$,
we say that rank of a point $p$ is the rank of a projective class of point $[p] \in \PP W$. 
This number will be denoted by  
$R_{\Seg_{A_1,A_2,\ldots A_d}}(p):= R_{\Seg_{A_1,A_2,\ldots A_d}}(\langle p\rangle)$. 
Compare to Definition \ref{d:3way_tensor_rank}.

   We will drop $\Seg_{A_1,A_2,\ldots A_d}$ from the subscript, if the variety we work with is clear from the context.
\end{defin}

  
In the setting of Definition~\ref{def_r_linear_space}, if $X = \Seg_{A_1,A_2,\dots,A_d}$ and $W \subseteq A_1 \otimes A_2 \otimes \dots \otimes A_d$ and $d=1$, then $R(W)=R(\PP W)= \dim W$.
If $d=2$ and $W = \langle p \rangle$ is $1$-dimensional, then $R(W)$ is the rank of $p$ viewed as a linear map $A_1^*\to A_2$.
If $d=3$ and $W = \langle p \rangle$ is $1$-dimensional, then $R(W)$ is equal to $R(p)$ in the sense of Definition~\ref{def_r_linear_space}.

More generally, for an arbitrary $d$, one can relate the rank $R(p)$ of $d$-way tensors with the rank $R(W)$ of certain linear subspaces in the space of $(d-1)$-way tensors. 
This relation is based on the \emph{slice technique}, which we are going to review in Section~\ref{section_slices}.

A central task in many problems is to test tensor membership in a given set (e.g., if a tensor has rank $r$). Some of these sets are defined as the zero sets of collections of polynomials, i.e. as algebraic varieties. However in general, the set of tensors of rank at most $r$ is neither open nor closed.
One of the very few exceptions is the case of matrices, that is tensors in $A\otimes B$.

\subsection{Variety of simple tensors}\label{ss:variety_of_simple_tensors}

We will intersect linear subspaces of the tensor space with the Segre variety.
Using the language of algebraic geometry, such intersection may have a non-trivial scheme structure.
In Section \ref{s:rank_one_matrices_and_additive_rank} 
we just ignore the scheme structure and all  our intersections are set theoretic.
To avoid ambiguity of notation,
we write $\reduced{(\cdot)}$ to underline this issue, while
the reader not originating from algebraic geometry should ignore the symbol $\reduced{(\cdot)}$.

\begin{notation}\label{notation_V_Seg}
  Given a linear subspace of a tensor space, $V \subseteq A_1 \otimes A_2 \otimes \dotsb \otimes A_d$,
    we denote:
\[
  V_{\Seg}:=\reduced{(\PP V  \cap \Seg_{A_1,A_2,\dots,A_d})}. 
\]
Thus, $V_{\Seg}$ is (up to projectivization) 
   the set of rank one tensors in $V$.
\end{notation}

In this setting, we have the following trivial rephrasing of the definition of rank:
\begin{prop}[{\cite[Prop. 2.3.]{BPR}}]\label{prop_can_choose_decomposition_containing_simple_tensors_from_W}
   Suppose $W \subseteq A_1 \otimes A_2 \otimes \dotsb \otimes A_d$ is a linear subspace.
   Then $R(W)$ is equal to the minimal number $r$ such that there exists a linear subspace $V \subseteq A_1 \otimes A_2 \otimes \dotsb \otimes A_d$ of dimension $r$
     with $W \subseteq V$ and $\PP V $ is linearly spanned by 
     $V_{\Seg}$.
   In particular,
   \begin{enumerate}
      \item $R(W) = \dim W$ if and only if  
            \[
               \PP  W = \langle W_{\Seg} \rangle.
            \]  
      \item Let $U$ be the linear subspace such that $\PP U :=\langle W_{\Seg} \rangle$. 
            Then $\dim U$ tensors from $W$ can be used in the minimal decomposition of $W$, 
             that is there exist $\fromto{s_1}{s_{\dim U}}\in W_{\Seg}$ such that $W\subset \langle \fromto{s_1}{s_{R(W)}} \rangle$ and $s_i$ are simple tensors.
   \end{enumerate}
\end{prop}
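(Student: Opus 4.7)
The plan is to unwind Definition~\ref{d:3way_tensor_rank} and translate its quantifiers into the language of subspaces $V$ and their intersections with the Segre variety. In the forward direction, starting from a minimal decomposition $\{a_i \otimes b_i \otimes c_i\}_{i=1}^{r}$ of $W$ with $r = R(W)$, I would set $V := \langle a_1\otimes b_1 \otimes c_1, \ldots, a_r \otimes b_r \otimes c_r\rangle$. Then $W \subseteq V$, and by construction $\PP V$ is spanned by points of the Segre variety, so $\PP V = \langle V_{\Seg}\rangle$. The minimality of $r$ forces the $a_i\otimes b_i \otimes c_i$ to be linearly independent, for otherwise dropping a redundant summand would produce a shorter decomposition; this gives $\dim V = r$. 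Conversely, given any $V$ of dimension $r$ with $W\subseteq V$ and $\PP V = \langle V_{\Seg}\rangle$, I would pick $r$ simple tensors in $V_{\Seg}$ forming a basis of $V$; this directly exhibits a decomposition of length $r$ for $W$ in the sense of Definition~\ref{d:3way_tensor_rank}.

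For part (1), I would rely on the elementary bound $R(W) \geq \dim W$, which holds because any decomposition of $W$ must span a subspace containing $W$. If $\PP W = \langle W_{\Seg}\rangle$, then $W$ itself is admissible as $V$ in the characterization proved above, yielding $R(W) \leq \dim W$, hence equality. For the converse, if $R(W) = \dim W$, I would take any optimal $V$ from the characterization; the inclusion $W \subseteq V$ combined with $\dim V = \dim W$ forces $V = W$, and therefore $\PP W = \PP V = \langle V_{\Seg}\rangle = \langle W_{\Seg}\rangle$.

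For part (2), I would start from a minimal decomposition $W \subseteq V = \langle s_1,\ldots, s_{R(W)}\rangle$ produced by the characterization, with the $s_i$ simple and (by minimality) linearly independent. Since $U = \langle W_{\Seg}\rangle \subseteq W \subseteq V$, I would pick linearly independent $s'_1,\ldots, s'_{\dim U} \in W_{\Seg}$ spanning $U$ and invoke the Steinitz exchange lemma to complete them to a basis of $V$ by adjoining $R(W) - \dim U$ of the original $s_i$. The resulting basis gives the desired decomposition of length $R(W)$ in which $\dim U$ of the simple summands are drawn from $W_{\Seg}$. Overall the proposition is an essentially bookkeeping reformulation of the rank; the only step that requires a moment of care is the exchange argument in part (2), where one must verify that enough of the original $s_i$ survive to form, together with the $s'_j$, a spanning set of $V$ that still contains $W$.
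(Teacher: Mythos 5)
Your proof is correct, and it is the natural unwinding of Definition~\ref{d:3way_tensor_rank} that the paper itself has in mind: the statement is quoted from \cite{BPR} and introduced as a ``trivial rephrasing of the definition of rank'', with no proof given in this article. Your forward/backward translation between decompositions and subspaces $V$ with $\PP V=\langle V_{\Seg}\rangle$, and the Steinitz exchange step for part (2), supply exactly the bookkeeping that the cited source carries out.
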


\subsection{Slice technique and conciseness}\label{section_slices}

We define the notion of conciseness of tensors and we review 
   a standard \emph{slice technique} that replaces the calculation of rank of three way tensors with the calculation of rank of linear spaces of matrices. 

A tensor $p \in A_1 \otimes A_2 \otimes \dotsb \otimes A_d$ determines a linear map $p \colon A_1^* \to A_2 \otimes \dotsb \otimes A_d$. 
If we choose a basis $\{a_1,a_2, \ldots, a_\bfa\}$ of $A_1$ we can write
$$
p=\sum_{i=1}^{\bfa} a_i\ts w_i,
$$
where $w_1,\dots,w_{\bfa}\in W:=p({A_1}^\ast)\subset A_2 \otimes \dotsb \otimes A_d$.

The elements $w_1,\dots,w_{\bfa}\in W$ are called \emph{slices} of $p$.
The point is that $W$ essentially uniquely (up to an action of $GL(A_1)$) determines $p$ (cf. \cite[Cor.~3.6]{landsberg_jabu_ranks_of_tensors}).
Thus, the subspace $W$ captures the geometric information about $p$, in particular its rank and border rank.

\begin{lem}[{\cite[Thm~2.5]{landsberg_jabu_ranks_of_tensors}}]\label{lem_rank_of_space_equal_to_rank_of_tensor}
   Suppose $p \in A_1 \otimes A_2 \otimes \dotsb \otimes A_d$ and $W = p(A_1^*)$ as above. 
   Then $R(p) = R(W)$ 
.
\end{lem}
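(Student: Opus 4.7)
The plan is to prove the two inequalities $R(p) \ge R(W)$ and $R(p) \le R(W)$ separately; both are essentially routine unpackings of the definition of rank, so the statement is a structural one rather than a computational one.

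For $R(p) \ge R(W)$, I would start from a minimal tensor decomposition
\[
   p = \sum_{j=1}^{r} a^{(j)} \otimes s^{(j)}, \qquad a^{(j)} \in A_1,\ s^{(j)} \in \Seg_{A_2,\dots,A_d},
\]
where $r = R(p)$. For any $\alpha \in A_1^*$ one then has $p(\alpha) = \sum_j \alpha(a^{(j)})\, s^{(j)}$, so that $W = p(A_1^*)$ is contained in the linear span $\langle s^{(1)},\dots,s^{(r)}\rangle$ of $r$ simple tensors of $A_2 \otimes \dotsb \otimes A_d$. By Definition~\ref{def_r_linear_space} this gives $R(W) \le r = R(p)$.

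For the reverse inequality, choose a basis $\{a_1,\dots,a_\bfa\}$ of $A_1$ with dual basis $\{a_1^*,\dots,a_\bfa^*\}$, and expand $p = \sum_{i=1}^{\bfa} a_i \otimes w_i$ with $w_i = p(a_i^*) \in W$. Using Proposition~\ref{prop_can_choose_decomposition_containing_simple_tensors_from_W} (or directly the definition), pick simple tensors $s_1,\dots,s_r \in \Seg_{A_2,\dots,A_d}$ with $r = R(W)$ such that $W \subseteq \langle s_1,\dots,s_r\rangle$. Write each slice as $w_i = \sum_{j=1}^{r} c_{ij}\, s_j$, substitute, and swap the order of summation:
\[
   p = \sum_{i=1}^{\bfa} a_i \otimes \Big(\sum_{j=1}^{r} c_{ij}\, s_j\Big) = \sum_{j=1}^{r} \Big(\sum_{i=1}^{\bfa} c_{ij}\, a_i\Big) \otimes s_j.
\]
Since each term on the right is a simple tensor of $A_1 \otimes A_2 \otimes \dotsb \otimes A_d$, this exhibits a rank-$r$ decomposition of $p$, giving $R(p) \le R(W)$.

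There is no real obstacle in the argument; the only point deserving some care is the logical structure of the second direction, where one must use Proposition~\ref{prop_can_choose_decomposition_containing_simple_tensors_from_W} to upgrade the abstract definition of $R(W)$ into a concrete family of simple tensors that actually span a subspace containing $W$. Once those are fixed, the linear-algebra manipulation above gives a decomposition of $p$ whose length matches $R(W)$, and combining both directions yields $R(p) = R(W)$.
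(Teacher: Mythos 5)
Your proof is correct: both inequalities follow exactly as you write them, and the slice-substitution argument in the second direction is the standard proof of this fact (the paper itself only cites \cite[Thm~2.5]{landsberg_jabu_ranks_of_tensors} and gives no proof of its own). The only cosmetic remark is that invoking Proposition~\ref{prop_can_choose_decomposition_containing_simple_tensors_from_W} is unnecessary --- Definition~\ref{def_r_linear_space} already hands you the simple tensors $s_1,\dots,s_r$ spanning a subspace containing $W$ --- as you yourself note.
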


Clearly, we may also replace  $A_1$ with any of the $A_i$ to define slices as images $p(A_i^*)$ and obtain the analogue of the lemma. 
The technique of slicing a tensor is classical. The relations between the rank of a tensor and the rank of the space defined by its slices were found by Terracini \cite{Ter1915}.

\subsection{Independence of the rank of the ambient space}

As defined above, the notions of rank and border rank of a vector subspace  $W \subseteq A_1\otimes A_2 \otimes\dotsb \otimes A_d$,
   or of a tensor $p \in A_1 \otimes A_2 \otimes\dotsb \otimes A_d$, might seem to  depend on the ambient spaces $A_i$.
However, it is well known, that the rank is actually independent of the choice of the vector spaces.
Also a stronger fact about the rank is true.
Suppose $p \in A_1' \otimes A_2' \otimes \dotsb \otimes A_d'$ for some linear subspaces $A_i'\subset A_i$.
  Any minimal expression $p\subseteq \langle \fromto{s_1}{s_{R(p)}}\rangle$, 
  for simple tensors $s_i$, must be contained in  $A_1' \otimes \dotsb \otimes A_d'$.
In \cite{BPR}, authors show that the difference in the length of the decompositions 
  must be at least the difference of the respective dimensions.
We stress that the lemma below does not depend on the base field, in particular, it does not require algebraic closedness.

\begin{lem}[{\cite[Lem. 2.8]{BPR}}]\label{lem_bound_on_rank_for_non_concise_decompositions}
   Suppose that $p \in A_1' \otimes A_2 \otimes A_3 \otimes \dotsb \otimes A_d$, for a linear subspace $A_1'\subset A_1$, 
      and that we have an expression $p \in \langle \fromto{s_1}{s_{r}}\rangle$, where $s_i = a_i^1 \otimes a_i^2 \otimes \dots \otimes a_i^d$ are simple tensors.
  Then:
  \[
     R(p) + \dim \langle \fromto{a_1^1}{a_r^1}\rangle - \dim A' \leq r.
  \]
\end{lem}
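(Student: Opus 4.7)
The plan is to convert the given length-$r$ decomposition into a shorter one by exploiting that $p$ lies in the subspace $A_1' \otimes A_2 \otimes \dotsb \otimes A_d$. First, I would fix any linear complement $A_1''$ of $A_1'$ in $A_1$ and split each $a_i^1 = \alpha_i + \beta_i$ with $\alpha_i \in A_1'$ and $\beta_i \in A_1''$. Writing $t_i := a_i^2 \otimes \dotsb \otimes a_i^d$, the given decomposition becomes
\[
  p \;=\; \sum_{i=1}^{r} \alpha_i \otimes t_i \;+\; \sum_{i=1}^{r} \beta_i \otimes t_i,
\]
and the two sums lie in the direct summands $A_1' \otimes (A_2 \otimes \dotsb \otimes A_d)$ and $A_1'' \otimes (A_2 \otimes \dotsb \otimes A_d)$ respectively. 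Since $p$ lies in the first summand, the second piece must vanish: $\sum_i \beta_i \otimes t_i = 0$.

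Next, I would read off linear relations among the tails $t_i$ from this vanishing. Contracting with an arbitrary $\xi \in (A_1'')^*$ gives $\sum_i \xi(\beta_i)\, t_i = 0$ in $A_2 \otimes \dotsb \otimes A_d$, and as $\xi$ varies the coefficient tuples $(\xi(\beta_i))_{i=1}^r \in \kk^r$ span a subspace of dimension $m := \dim\langle \beta_1, \dotsc, \beta_r\rangle$. But $\langle \beta_i \rangle$ is precisely the image of $V := \langle a_1^1, \dotsc, a_r^1\rangle$ under the projection $A_1 \twoheadrightarrow A_1/A_1' \cong A_1''$, so $m = \dim V - \dim(V \cap A_1') \geq k - \dim A_1'$, where $k := \dim V$.

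Finally, a Gauss step on those $m$ independent relations produces the bound. Picking a pivot set $I \subseteq \{1, \dotsc, r\}$ of size $m$ on which the relation subspace has full rank, one solves for each $t_i$ with $i \in I$ as a linear combination of $\{t_j : j \notin I\}$ and substitutes into $\sum_i \alpha_i \otimes t_i$. This yields $p = \sum_{j \notin I} \tilde{\alpha}_j \otimes t_j$ with $\tilde{\alpha}_j \in A_1'$, and each summand is still a simple tensor: only the first-factor vector was altered (by an $A_1'$-linear combination), while the rank-one tails $t_j$ are re-used verbatim. Hence $R(p) \leq r - m \leq r - k + \dim A_1'$, which is the claimed inequality.

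I expect the only delicate point to be the bookkeeping in the Gauss step, namely confirming that an $m$-dimensional subspace of relations in $\kk^r$ admits a pivot set of size $m$ (standard linear algebra) and that the substitution really preserves the rank-one structure of each remaining summand (automatic here, since the $t_j$ are untouched). Everything else is direct manipulation once the initial splitting $a_i^1 = \alpha_i + \beta_i$ is in place.
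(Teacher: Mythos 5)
Your argument is correct and is essentially the standard proof of this lemma (which the present paper only cites from [BPR, Lem.~2.8] without reproducing): split off the component along a complement of $A_1'$, observe it must vanish, harvest $\dim\langle a_1^1,\dots,a_r^1\rangle-\dim(\langle a_1^1,\dots,a_r^1\rangle\cap A_1')\ \ge\ \dim\langle a_1^1,\dots,a_r^1\rangle-\dim A_1'$ independent linear relations among the tails $t_i$, and eliminate that many summands by Gaussian elimination, which preserves simplicity of the surviving terms. The one point to make explicit is the normalization at the start: $p\in\langle s_1,\dots,s_r\rangle$ only gives $p=\sum_i\lambda_i s_i$, and a vanishing $\lambda_i$ cannot be absorbed into $a_i^1$ without shrinking the span $\langle a_1^1,\dots,a_r^1\rangle$; restricting to the set $I$ of nonzero indices and using $\dim\langle a_i^1 : i\le r\rangle\le\dim\langle a_i^1 : i\in I\rangle+(r-|I|)$ shows the stated inequality still follows.
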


The analogue of Lemma~\ref{lem_bound_on_rank_for_non_concise_decompositions}
  for higher dimensional subspaces of the tensor space is also true.

\begin{prop}[{\cite[Prop. 2.10.]{BPR}}]\label{prop_bound_on_rank_for_non_concise_decompositions_for_vector_spaces}
   Suppose $W \subset A_2' \otimes \dotsb \otimes A_d'$ for some linear subspaces $A_2'\subset A_2$,..., $A_d' \subset A_d$. 
\begin{enumerate}
   \item \label{item_rank_can_be_measured_anywhere}
      The number $R(W)$ 
      measured as the rank 
      of $W$ in $A_2' \otimes \dotsb \otimes A_d'$
        is equal to its rank 
        calculated in $A_2 \otimes \dotsb \otimes A_d$ 
        .
   \item \label{item_decompositions_in_larger_spaces}
      Moreover, if we have an expression $W \subset \langle \fromto{s_1}{s_{r}}\rangle$, 
        where $s_i = a_i^2 \otimes a_i^3 \otimes \dots \otimes a_i^d$ are simple tensors,
        then:
  \[
    R(W) + \dim \langle \fromto{a_1^2}{a_r^2}\rangle - \dim A_2' \leq r
  \]
\end{enumerate}

\end{prop}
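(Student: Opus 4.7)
The plan is to reduce both parts to Lemma~\ref{lem_bound_on_rank_for_non_concise_decompositions}, which is the single-tensor analogue, by encoding the subspace $W$ as a tensor with an auxiliary factor. Fix a basis $w_1, \ldots, w_m$ of $W$ (where $m = \dim W$) and set
\[
p := \sum_{j=1}^{m} e_j \otimes w_j \in \kk^m \otimes A_2 \otimes \cdots \otimes A_d,
\]
where $e_1, \ldots, e_m$ is the standard basis of $\kk^m$. By Lemma~\ref{lem_rank_of_space_equal_to_rank_of_tensor} one has $R(p) = R(W)$, and since $W \subseteq A_2' \otimes \cdots \otimes A_d'$ the tensor $p$ actually lies in $\kk^m \otimes A_2' \otimes \cdots \otimes A_d'$.

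For part~\ref{item_decompositions_in_larger_spaces}, start with the hypothesis $W \subseteq \langle s_1, \ldots, s_r\rangle$ with $s_i = a_i^2 \otimes \cdots \otimes a_i^d$. Writing each basis vector $w_j = \sum_i \lambda_{ij} s_i$ and swapping the order of summation produces
\[
p = \sum_{i=1}^r e_i' \otimes a_i^2 \otimes \cdots \otimes a_i^d, \qquad e_i' := \sum_{j=1}^m \lambda_{ij}\, e_j \in \kk^m,
\]
which is an expression of $p$ with at most $r$ simple tensor summands in $\kk^m \otimes A_2 \otimes \cdots \otimes A_d$. Applying Lemma~\ref{lem_bound_on_rank_for_non_concise_decompositions} to $p$ with the factor $A_2$ playing the role of the constrained direction (the lemma is symmetric under any relabelling of the $d$ tensor factors, obtained here by transposing the first two slots of $p$) yields
\[
R(p) + \dim\langle a_1^2, \ldots, a_r^2\rangle - \dim A_2' \leq r,
\]
and substituting $R(p) = R(W)$ gives the claimed inequality. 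One bookkeeping detail: some of the lifted coefficients $e_i'$ may vanish, but dropping the corresponding terms decreases the summand count and the relevant span-dimension by at most equal amounts, so the inequality is unaffected.

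For part~\ref{item_rank_can_be_measured_anywhere}, the inequality ``rank in $A_2' \otimes \cdots \otimes A_d'$ is at least rank in $A_2 \otimes \cdots \otimes A_d$'' is immediate, since any decomposition inside the smaller space is automatically a decomposition inside the larger one. For the opposite inequality, fix complements $A_i = A_i' \oplus U_i$ for $i \geq 2$ and let $\pi_i \colon A_i \to A_i'$ denote the associated projections. Given any decomposition $W \subseteq \langle s_1, \ldots, s_r\rangle$ in the larger ambient space with $r = R(W)$, apply $\pi_2 \otimes \cdots \otimes \pi_d$: the left-hand side is unchanged because $W \subseteq A_2' \otimes \cdots \otimes A_d'$, while the right-hand side produces a decomposition of length at most $r$ whose simple tensors all lie in $A_2' \otimes \cdots \otimes A_d'$. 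I expect no deep obstacle; the only care needed is in re-indexing tensor factors when invoking Lemma~\ref{lem_bound_on_rank_for_non_concise_decompositions} and in handling the possibly vanishing lifted coefficients.
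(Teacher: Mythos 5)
Your proof is correct. The paper itself does not prove this proposition but imports it from \cite{BPR}; your argument --- encoding $W$ as the tensor $p=\sum_j e_j\otimes w_j$ so that $R(p)=R(W)$ by Lemma~\ref{lem_rank_of_space_equal_to_rank_of_tensor}, lifting the given decomposition, invoking Lemma~\ref{lem_bound_on_rank_for_non_concise_decompositions} on the $A_2$-factor, and using the projection $\pi_2\otimes\dotsb\otimes\pi_d$ for part~\ref{item_rank_can_be_measured_anywhere} --- is exactly the standard reduction used there, and your bookkeeping for the vanishing coefficients $e_i'$ is handled correctly.
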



We conclude this section by recalling the following definition.
\begin{defin}
   Let $p \in A_1 \otimes A_2 \otimes \dotsb \otimes A_d$ be a tensor or let $W \subset A_1 \otimes A_2 \otimes \dotsb \otimes A_d$ be a linear subspace. 
   We say that $p$ or $W$ is \emph{$A_1$-concise} if for all linear subspaces $V \subset A_1$, if $p \in V \otimes A_2 \otimes \dotsb \otimes A_d$
      (respectively,  $W \subset V \otimes A_2 \otimes \dotsb \otimes A_d$), then $V = A_1$.
   Analogously, we define $A_i$-concise tensors and spaces for $i =\fromto{2}{d}$. 
   We say $p$ or $W$ is \emph{concise} if it is $A_i$-concise for all $i\in \setfromto{1}{n}$.
\end{defin}

\begin{rem}
Notice, that $p \in A_1 \otimes A_2 \otimes \dotsb \otimes A_d$ is $A_1$-concise if and only if $p\colon A_1^* \to A_2 \otimes \dotsb \otimes A_d$ is injective.
In particular, from injectivity and Lemma \ref{lem_rank_of_space_equal_to_rank_of_tensor} follows that rank of a $A_1$-concise tensor is greater or equal than the dimension of~$A_1$.
\end{rem}

\section{Direct sum tensors and spaces of matrices}\label{s:dir_sums}
In this section we introduce the notation coming from 
\cite{BPR} which will be adopted throughout Sections \ref{s:dir_sums}
and \ref{s:rank_one_matrices_and_additive_rank}.
For simplicity we restrict the presentation to the case of tensors in $A\otimes B \otimes C$ or linear subspaces of $B \otimes C$.

\begin{notation}\label{notation}
Let $A = A' \oplus A''$,  $B = B' \oplus B''$, $C = C' \oplus C''$ be vector spaces over $\kk$ of dimensions $\bfa=\dim A = \bfa'+\bfa''$, $\bfb=\dim B =\bfb'+\bfb''$ and $\bfc=\dim C =\bfc'+\bfc''$.

For the purpose of illustration, we will interpret the two-way tensors in $B \otimes C$ 
as matrices in $\kk^{\bfb\times \bfc}$. 
This requires choosing bases of $B$ and $C$, but (whenever possible) we will refrain from naming the bases explicitly.
We will refer to an element of the space of matrices 
$\kk^{\bfb\times \bfc} \simeq B\ts C$ as a $(\bfb'+\bfb'',\bfc'+\bfc'')$ \emph{partitioned matrix}.
Every matrix $w\in\kk^{\bfb\times \bfc} $ is a block matrix with four blocks of size
$\bfb'\times \bfc'$, $\bfb'\times \bfc''$, 
$\bfb''\times \bfc'$ and $\bfb''\times \bfc''$ respectively.
\end{notation}

\begin{notation}\label{notation_2}
As in Section~\ref{section_slices}, a tensor $p\in A\otimes B\otimes C$ is a linear map $p:A^\ast\to B\otimes C$;
   we denote the image of $A^\ast$ in the space of matrices $B\otimes C$  by $W:=p(A^\ast)$ .
   Similarly, if $p = p' \oplus p'' \in (A' \oplus A'') \otimes (B' \oplus B'') \otimes (C' \oplus C'')$ is such that  $p'\in A'\otimes  B'\otimes C'$ and $p''\in A''\otimes  B''\otimes C''$, we set $W':=p'({A'}^\ast)\subset B'\otimes C'$ and $W'':=p''({A''}^\ast)\subset B''\otimes C''$. 
In such situation, we will say that $p = p' \oplus p''$ is a \emph{direct sum tensor}.

We have the following direct sum decomposition:
\[
  W=W'\oplus W''\subset (B'\otimes C')\oplus (B''\otimes C'')
\]
and an induced matrix partition of type $(\bfb'+\bfb'',\bfc'+\bfc'')$ on every matrix $w\in W$  such that 
\[
  w=\begin{pmatrix} 
       w'           & \underline{0} \\
      \underline{0} & w''
    \end{pmatrix},
\]
where $w'\in W'$ and $w''\in W''$, and the two $\underline{0}$'s denote zero matrices of size
$\bfb'\times\bfc''$ and $\bfb''\times\bfc'$ respectively.
\end{notation}

\begin{prop}[{\cite[Prop. 3.3.]{BPR}}]\label{prop_translete_to_subspace_language}
Suppose that $p$, $W$, etc.~are as in Notation~\ref{notation_2}.
Then the additivity of the rank holds for $p$, that is~$R(p) = R(p')+R(p'')$, if and only if the additivity of the rank holds for $W$, that is $R(W) = R(W')+ R(W'')$.
\end{prop}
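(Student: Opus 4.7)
The proof plan is to reduce both sides of the equivalence to a single identity, using the slice technique machinery recalled in Section~\ref{s:ranks_and_slices}.

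First I would translate $R(p)$, $R(p')$ and $R(p'')$ into rank statements about the associated slice spaces. Applying Lemma~\ref{lem_rank_of_space_equal_to_rank_of_tensor} to the three tensors $p$, $p'$, $p''$ separately yields
\[
R(p)=R(W),\qquad R(p')=R(W'),\qquad R(p'')=R(W''),
\]
where $W'=p'({A'}^\ast)\subset B'\otimes C'$ and $W''=p''({A''}^\ast)\subset B''\otimes C''$ are computed in their native ambient spaces, while $W=p(A^\ast)\subset B\otimes C$ is computed in the big one. To handle the distinction between ``native'' and ``big'' ambient spaces I would invoke Proposition~\ref{prop_bound_on_rank_for_non_concise_decompositions_for_vector_spaces}\eqref{item_rank_can_be_measured_anywhere}, which guarantees that $R(W')$ and $R(W'')$ do not change when they are regarded as subspaces of $B\otimes C$ instead of $B'\otimes C'$ and $B''\otimes C''$.

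Next I would verify the direct sum identity $W=W'\oplus W''$ at the level of slice spaces. Since $A=A'\oplus A''$ we have $A^\ast=(A')^\ast\oplus (A'')^\ast$ canonically. The component $p'\in A'\otimes B'\otimes C'$ annihilates $(A'')^\ast$ and sends $(A')^\ast$ into $B'\otimes C'$; symmetrically for $p''$. Hence
\[
W=p(A^\ast)=p'((A')^\ast)+p''((A'')^\ast)=W'+W'',
\]
and the sum is direct because $W'\subseteq B'\otimes C'$ and $W''\subseteq B''\otimes C''$ meet trivially inside $B\otimes C$. This is exactly the block-matrix picture displayed in Notation~\ref{notation_2}.

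With these two ingredients in place, the claimed equivalence becomes a tautology: the equation $R(p)=R(p')+R(p'')$ and the equation $R(W)=R(W')+R(W'')$ are, after substitution of the three identities from the first step, literally the same statement. The only thing that requires a moment of care is the step of transporting the ranks of $W'$ and $W''$ to the ambient space $B\otimes C$, which is why Proposition~\ref{prop_bound_on_rank_for_non_concise_decompositions_for_vector_spaces} is the real substantive ingredient here; once this is cited, no further work is needed.
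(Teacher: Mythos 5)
Your proof is correct and follows the standard argument (the paper itself only cites \cite[Prop.~3.3]{BPR} for this statement, and the intended proof is exactly this slice-technique reduction): apply Lemma~\ref{lem_rank_of_space_equal_to_rank_of_tensor} to $p$, $p'$, $p''$, observe $W=W'\oplus W''$, and use the independence of rank from the ambient space. No gaps.
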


\subsection{Projections and decompositions} \label{ss:proj_and_decomp}

The situation we consider here concerns the direct sums and their minimal decompositions.
We fix $W' \subset B'\otimes C'$ and $W'' \subset B''\otimes C''$ and we choose a minimal decomposition of $W'\oplus W''$ (see Section \ref{ss:variety_of_simple_tensors}), 
that is a linear subspace $V\subset B \otimes C$
  such that $\dim V = R(W'\oplus W'')$, $\PP V=\linspan{V_{\Seg}}$ and $V\supset W'\oplus W''$.
Such linear spaces $W'$, $W''$ and $V$ will be fixed for the rest of Sections~\ref{s:dir_sums} and \ref{s:rank_one_matrices_and_additive_rank}.

In addition to  Notations~\ref{notation_V_Seg}, \ref{notation} and \ref{notation_2} we need the following.

\begin{notation}\label{notation_projection}
Under Notation~\ref{notation},  let $\pi_{C'}$ denote the projection
\[
   \pi_{C'}:C \to C'',
\]
whose kernel is the space $C'$. With slight abuse of notation, we shall  denote by $\pi_{C'}$ also the following projections
   \[
   \pi_{C'}:B\ts C\to B\ts C'', \text{ or } \pi_{C'}:A\otimes B\ts C\to A \otimes B\ts C'',
\]
with kernels, respectively, $B\otimes C'$ and $A\otimes B \otimes C'$.
The target of the projection is regarded as a subspace of $C$, $B\otimes C$, or $A\otimes B \otimes C$, so that it is possible to compose such projections, for instance:
    \[
        \pi_{C'} \pi_{B''} \colon B\otimes C \to B'\otimes C'', \text{ and } \pi_{C'} \pi_{B''} \colon A \otimes  B\otimes C \to A\otimes B'\otimes C''.
    \]
We also let $E'\subset B' $ (resp. $E''\subset B''$) be the minimal vector subspace such that 
   $\pi_{C'}(V)$ (resp. $\pi_{C''}(V)$) is contained in $(E'\oplus B'')\ts C''$ (resp. $(B'\oplus E'')\ts C'$).

By swapping the roles of $B$ and $C$, we  define $F'\subset C'$ and $F''\subset C''$ analogously. By the lowercase letters $\bfe',\bfe'',\bff',\bff''$ we denote the dimensions of the subspaces $E',E'',F',F''$.
\end{notation}

\begin{figure}
\centering
\includegraphics[scale=0.06]{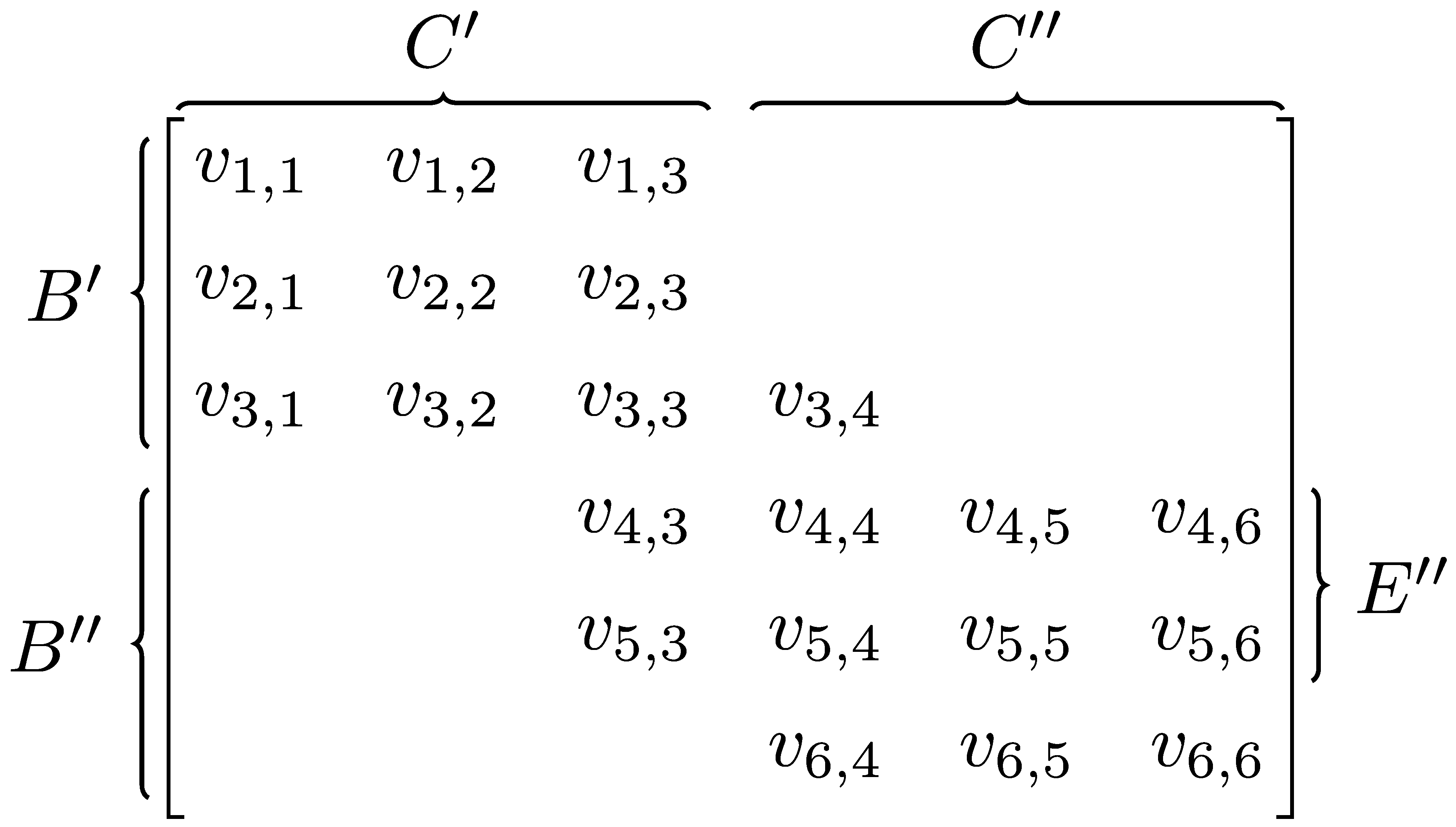}
\caption{A minimal decomposition of $W'\oplus W''$, 
that is a linear subspace $V\subset B \otimes C$
  such that $\dim V = R(W'\oplus W'')$, $\PP V=\linspan{V_{\Seg}}$ and $V\supset W'\oplus W''$.
  We denote by $E''\subset B'' $ the minimal vector subspace such that $\pi_{C''}(V) \subset B \otimes C'$ is contained in $(B'\oplus E'')\otimes C'$. 
In the presented case $(\bfb',\bfb'',\bfc',\bfc'')=(3,3,3,3)$ (we use Notation~\ref{notation}). 
} \label{fig_projections}
\end{figure}

If the differences $R(W') - \dim W'$ and $R(W'') - \dim W''$ (which we will informally call the \emph{gaps}) are large, then the spaces $E',E'',F',F''$ could be large too, in particular they can coincide with $B', B'', C', C''$ respectively. 
In fact, these spaces measure ``how far'' a minimal decomposition $V$ of a direct sum $W=W' \oplus W''$ is from being a direct sum of decompositions of $W'$ and $W''$. 

In particular, 
 if $E'' = \set{0}$ or if both $E''$ and $F''$ are sufficiently small, then $R(W) = R(W')+ R(W'')$.

\begin{lem}[{\cite[Lem. 3.5]{BPR}}]\label{lemma_bound_r'_e'_R_w'}
In Notation~\ref{notation_projection} as above, with $W=W'\oplus W'' \subset B\otimes C$, 
the following inequalities hold.
\begin{align*}
R(W') + \bfe'' & \le R(W)-\dim W'',&
R(W'')+ \bfe'  & \le R(W)-\dim W', \\
R(W') + \bff'' & \le R(W)-\dim W'',&
R(W'')+ \bff'  & \le R(W)-\dim W'.
\end{align*}
\end{lem}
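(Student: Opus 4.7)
I will prove the first inequality $R(W') + \bfe'' \leq R(W) - \dim W''$; the remaining three follow by exchanging $B \leftrightarrow C$ and/or $' \leftrightarrow ''$. The strategy is to project the minimal decomposition $V$ by $\pi_{C''}$ and study the image $\tilde V := \pi_{C''}(V)$ from two angles. On the one hand, $\pi_{C''}$ fixes $W' \subset B' \otimes C'$ and kills $W'' \subset B'' \otimes C''$, so $\tilde V \supset W'$ and $\tilde V \subset (B' \oplus E'') \otimes C'$ by the definition of $E''$; moreover $V \cap (B \otimes C'')$ contains $W''$, hence $\dim \tilde V \leq \dim V - \dim W'' = R(W) - \dim W''$.

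The key step is to upgrade this dimension bound into a rank bound on $\tilde V$. Because $V$ is a minimal decomposition, $\PP V = \langle V_{\Seg}\rangle$, so $V = \langle s_1, \ldots, s_r\rangle$ for simple tensors $s_i$; their images $\pi_{C''}(s_i)$ are either zero or still simple and together span $\tilde V$. Therefore $\PP \tilde V = \langle \tilde V_{\Seg}\rangle$ as well, and Proposition~\ref{prop_can_choose_decomposition_containing_simple_tensors_from_W} yields $R(\tilde V) = \dim \tilde V \leq R(W) - \dim W''$.

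It then remains to connect $R(\tilde V)$ with $R(W')$ via the substitution principle. I would pick a minimal simple-tensor decomposition of $\tilde V$; by Proposition~\ref{prop_bound_on_rank_for_non_concise_decompositions_for_vector_spaces}(1), its first-factor span $L$ coincides with the minimal first-factor span of $\tilde V$, and the minimality of $E''$ forces $\pi_{B'}(L) = E''$, so rank-nullity gives $\dim L = \dim(L \cap B') + \bfe''$. Since $W' \subset (L \cap B') \otimes C'$, Proposition~\ref{prop_bound_on_rank_for_non_concise_decompositions_for_vector_spaces}(2) applied to $W'$ with this decomposition gives $R(W') + \bfe'' \leq R(\tilde V)$, and combining with the previous bound establishes the claim.

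The main obstacle is the middle paragraph, where a trivial dimension estimate on $\tilde V$ is promoted to an estimate on its rank. This uses crucially that $V$ is a \emph{minimal} decomposition in the sense of Proposition~\ref{prop_can_choose_decomposition_containing_simple_tensors_from_W}: without $\PP V = \langle V_{\Seg}\rangle$ the image $\tilde V$ need not be projectively spanned by its rank-one locus, and the conversion from $\dim \tilde V$ to $R(\tilde V)$ would fail.
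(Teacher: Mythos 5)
Your argument is correct and is essentially the proof of {\cite[Lem.~3.5]{BPR}} that this paper cites rather than reproves: project the minimal decomposition $V$ by $\pi_{C''}$, bound $\dim \pi_{C''}(V)$ by $R(W)-\dim W''$ using $W''\subseteq \ker(\pi_{C''}|_V)$, note that the image is still spanned by its rank-one elements, and recover the extra $\bfe''$ via Proposition~\ref{prop_bound_on_rank_for_non_concise_decompositions_for_vector_spaces}. One small attribution slip: the fact that the first-factor span $L$ of a minimal decomposition of $\pi_{C''}(V)$ equals its conciseness space is not part~(1) of Proposition~\ref{prop_bound_on_rank_for_non_concise_decompositions_for_vector_spaces} but follows from part~(2), which forces $\dim L$ down to the dimension of the conciseness space while the reverse containment is automatic.
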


Rephrasing the inequalities of Lemma~\ref{lemma_bound_r'_e'_R_w'}, we obtain the following.
\begin{cor}[{\cite[Cor. 3.6.]{BPR}}]\label{cor_bounds_on_es_and_fs}
   If $R(W) < R(W')+R(W'')$, then 
   \begin{align*}
       \bfe' &< R(W')  - \dim W', &
       \bff' &< R(W')  - \dim W', \\
       \bfe''&< R(W'') - \dim W'',&
       \bff''&< R(W'') - \dim W''.
   \end{align*}
\end{cor}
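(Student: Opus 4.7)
The plan is to deduce all four strict inequalities by a direct substitution of the hypothesis $R(W) < R(W') + R(W'')$ into the four inequalities already supplied by Lemma~\ref{lemma_bound_r'_e'_R_w'}. Geometrically, all of the real content has already been extracted in that lemma, so the corollary is in effect just a convenient contrapositive-style repackaging of it, suited to tracking the ``gaps'' $R(W') - \dim W'$ and $R(W'') - \dim W''$ that control how far a minimal decomposition of $W' \oplus W''$ is from being block-diagonal.

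Concretely, I would proceed as follows. Take the inequality
\[
   R(W'') + \bfe' \;\le\; R(W) - \dim W'
\]
supplied by Lemma~\ref{lemma_bound_r'_e'_R_w'}, and substitute the strict assumption $R(W) < R(W') + R(W'')$ into its right-hand side. This gives $R(W'') + \bfe' < R(W') + R(W'') - \dim W'$, and subtracting $R(W'')$ from both sides yields $\bfe' < R(W') - \dim W'$, which is the first claim. For $\bff'$, the same one-line manipulation applied to $R(W'') + \bff' \le R(W) - \dim W'$ yields $\bff' < R(W') - \dim W'$. The remaining two inequalities, for $\bfe''$ and $\bff''$, are obtained symmetrically by swapping the roles of $'$ and $''$, i.e.\ starting from $R(W') + \bfe'' \le R(W) - \dim W''$ and $R(W') + \bff'' \le R(W) - \dim W''$, respectively.

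Since every step is an elementary rearrangement, there is no real obstacle in the argument; the only thing worth highlighting is that the strictness of the conclusion comes exactly from the strictness of the hypothesis $R(W) < R(W') + R(W'')$, while Lemma~\ref{lemma_bound_r'_e'_R_w'} itself only provides non-strict bounds. This is why the four inequalities of the corollary are stated with $<$ rather than $\le$, and also explains why the corollary is the form of the statement that will be convenient in the forthcoming applications, where one wants to assume non-additivity and then force $\bfe',\bff',\bfe'',\bff''$ to be small.
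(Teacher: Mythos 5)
Your proposal is correct and is exactly the paper's intended argument: the text introduces this corollary with the phrase ``Rephrasing the inequalities of Lemma~\ref{lemma_bound_r'_e'_R_w'}'', and your substitution of the strict hypothesis $R(W) < R(W') + R(W'')$ into each of the four bounds of that lemma is precisely that rephrasing. Nothing is missing.
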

This immediately recovers a known case of additivity, when the gap is equal to $0$ \cite[Prop. 10.3.3.3]{landsberg_tensorbook}.
Moreover, it implies that if one of the gaps is equal to $1$ 
   (say  $R(W')=\dim W'+1$),
   then the additivity holds. Even more is true. It is sufficient to assume that only one of $E'$ or $F'$ is zero.

  \begin{prop}[{\cite[Lem. 3.7, Prop. 4.4.]{BPR}}]\label{prop_SAC_if_E'=0}\label{lem_rank_at_least_2_more_than_dimension}
With Notation~\ref{notation_projection}, 
if one among $E',E'',F',F''$ is zero, then $R(W)=R(W')+R(W'')$.
   In particular, if $R(W') \le \dim W' +1$, then the additivity holds.
\end{prop}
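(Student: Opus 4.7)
After exploiting the symmetries mentioned in the proposition (swapping $B \leftrightarrow C$ and/or ${}' \leftrightarrow {}''$), I reduce to the case $E'' = 0$. Unfolding Notation~\ref{notation_projection} this says $\pi_{C''}(V) \subset B' \otimes C'$, so every $v \in V$ has zero lower-left block in the $(\bfb'+\bfb'',\bfc'+\bfc'')$ partition. Consequently, whenever a simple tensor $b \otimes c$ lies in $V$, its lower-left block $b'' \otimes c'$ vanishes, forcing either $b \in B'$ or $c \in C''$; that is, each rank-one tensor of $V$ lies in $B' \otimes C$ or in $B \otimes C''$.

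Since $V$ is a minimal decomposition, $\PP V = \langle V_{\Seg} \rangle$, so $V$ admits a basis of rank-one tensors $s_1, \dots, s_r$ with $r = \dim V = R(W)$. Using the dichotomy above, after relabelling one has $s_1, \dots, s_k \in B' \otimes C$ and $s_{k+1}, \dots, s_r \in B \otimes C''$; set $X_1 := \langle s_1, \dots, s_k \rangle$ and $X_2 := \langle s_{k+1}, \dots, s_r \rangle$, giving the internal direct sum $V = X_1 \oplus X_2$. As each $X_j$ is spanned by rank-one tensors, Proposition~\ref{prop_can_choose_decomposition_containing_simple_tensors_from_W} yields $R(X_j) = \dim X_j$.

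I then project $X_1$ via $\pi_{C''} \colon B' \otimes C \to B' \otimes C'$ and $X_2$ via $\pi_{B'} \colon B \otimes C'' \to B'' \otimes C''$. The images $U_1 := \pi_{C''}(X_1)$ and $U_2 := \pi_{B'}(X_2)$ are spanned by the rank-one images of the $s_i$, so $R(U_j) = \dim U_j \leq \dim X_j$. For $w' \in W' \subset B' \otimes C'$, writing $w' = x_1 + x_2$ with $x_j \in X_j$ and applying $\pi_{C''}$ gives $w' = \pi_{C''}(x_1) \in U_1$, since $\pi_{C''}$ annihilates $X_2 \subset B \otimes C''$. Hence $W' \subset U_1$ and $R(W') \leq \dim U_1 \leq k$; symmetrically $R(W'') \leq r - k$. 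Summing yields $R(W') + R(W'') \leq r = R(W)$, and the reverse inequality is trivial.

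For the final assertion, assume for contradiction that $R(W) < R(W') + R(W'')$ together with $R(W') \leq \dim W' + 1$. Corollary~\ref{cor_bounds_on_es_and_fs} then forces $\bfe' < R(W') - \dim W' \leq 1$, so $E' = 0$; the first part of the proposition now gives $R(W) = R(W') + R(W'')$, a contradiction. The only delicate point I anticipate is the dichotomy on rank-one tensors of $V$, which rests on the vanishing of the block $b'' \otimes c'$ imposed by $E'' = 0$; once this is in hand, the splitting $V = X_1 \oplus X_2$ and the two projection steps are routine.
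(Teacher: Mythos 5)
Your argument is correct, and it is essentially the standard proof of this statement: the paper itself only cites \cite[Lem.~3.7, Prop.~4.4]{BPR} without reproducing the proof, and your block-vanishing dichotomy for rank-one elements of $V$, the splitting $V = X_1 \oplus X_2$, and the two projections $\pi_{C''}$, $\pi_{B'}$ are exactly the mechanism used there, with the second assertion deduced from Corollary~\ref{cor_bounds_on_es_and_fs} just as in the source. No gaps.
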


%

We show, as a consequence of Corollary~\ref{cor_bounds_on_es_and_fs}, that
  if one of the gaps is at most two, 
  then the additivity of the rank holds, see Theorem~\ref{thm_additivity_rank_intro_jarek}~\ref{it:1_thm_additivity_rank_intro_jarek}.
In Corollary~\ref{cor_1_2_hook_shaped} 
   we prove a further generalization based on the results in the following subsection.

\subsection{``Hook''-shaped spaces and the substitution method}\label{sec_hook_shaped_spaces}

It is known since \cite{jaja_takche_Strassen_conjecture}, that the additivity of the tensor rank holds for tensors with one of the factors of dimension $2$ (Theorem~\ref{t:jaja_2_0_hook}). Namely, using Notation~\ref{notation} and \ref{notation_2}, 
  if $\bfa' \le 2$ then $R(p'+p'') = R(p')+R(p'')$.
The same claim over algebraically closed fields is proved in \cite[Prop.~3.2.12]{rupniewski_mgr}
using \emph{substitution method} (Propositions~\ref{proposition_for_AFT_method_coordinate_free} and \ref{proposition_for_AFT_method_slice_A}).
%
We generalize it in Corollary~\ref{cor_1_2_hook_shaped}.
To state and prove the generalization in Section \ref{s:rank_one_matrices_and_additive_rank} we introduce (following \cite{BPR}) the notion of ``hook''-shaped spaces. We stress that comparing to \cite[Prop.~3.17]{BPR} our proof works over any base field. 

\begin{defin}\label{def_hook_shaped_space}
   For non-negative integers $e, f$, we say that a linear subspace $W \subset B \otimes C$ 
      is \emph{$(e,f)$-hook shaped}, 
      if $W\subset \kk^e\otimes C + B \otimes \kk^f$ 
      for some choices of linear subspaces $\kk^e\subset B$ and $\kk^f\subset C$.
\end{defin}

The name ``hook shaped'' space comes from the fact that under an appropriate choice of basis, the only nonzero coordinates form a shape of a hook 
$\ulcorner$ situated in the upper left corner of the matrix, see Example~\ref{ex_hook}.
The integers $(e,f)$ specify how wide the edges of the hook are.
A similar name also appears in the context of Young diagrams, see for instance \cite[Def.~2.3]{berele_regev_hook_Young_diagrams_with_applications}.

\begin{example}\label{ex_hook}
A $(1,2)$-hook shaped subspace of $\kk^4 \otimes \kk^4$ has only the following possibly nonzero entries in some coordinates:
$$\begin{bmatrix}
  *   & * & * & *\\
  * & * & 0 & 0\\
  * & *  & 0 & 0\\
  * & *  & 0 & 0\\
 \end{bmatrix}.
$$
\end{example}

The following 
observation is presented in \cite[ Prop.~3.1]{landsberg_michalek_abelian_tensors} and in \cite[Lem.~B.1]{alexeev_forbes_tsimerman_Tensor_rank_some_lower_and_upper_bounds}.
Here, we have phrased it in a coordinate free way.

\begin{prop}[{\cite[Prop. 3.10]{BPR}}] \label{proposition_for_AFT_method_coordinate_free} 
   Let $p \in A\otimes B \otimes C$, $R(p) = r>0$, and pick $\alpha \in A^*$ such that $p(\alpha) \in B\otimes C$ is nonzero.
   Consider two hyperplanes in $A$: the linear hyperplane $\alpha^{\perp}= (\alpha =0)$
     and the affine hyperplane $(\alpha=1)$.
   For any $a\in (\alpha=1)$, denote
   \[
     \tilde{p}_{a}:= p - a\otimes p(\alpha) \in \alpha^{\perp} \otimes B \otimes C.
   \]
   Then:
 \begin{enumerate}
  \item \label{item_AFT_exists_choice_droping_rank}
        there exists a choice of $a\in (\alpha=1)$ such that $R(\tilde{p}_{a}) \leq r-1$,
  \item \label{item_AFT_any_choice_drops_rank_at_most_1}
        if in addition $R(p(\alpha))=1$, then for any choice of $a \in (\alpha=1)$ 
        we have $R(\tilde{p}_{a}) \geq r-1$.
 \end{enumerate}
\end{prop}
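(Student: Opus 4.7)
The plan is to prove the two parts separately, both by elementary manipulations of rank decompositions.

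For part \ref{item_AFT_exists_choice_droping_rank}, I would start with a minimal decomposition $p=\sum_{i=1}^{r} a_i\otimes b_i\otimes c_i$ of length $r=R(p)$. Applying $\alpha$ to the first factor yields $p(\alpha)=\sum_{i=1}^{r}\alpha(a_i)\,b_i\otimes c_i$, and since $p(\alpha)\neq 0$ there must exist some index, say $i=1$ after relabeling, with $\alpha(a_1)\neq 0$. I would then take $a:=a_1/\alpha(a_1)$, which lies in the affine hyperplane $(\alpha=1)$. Substituting into the definition of $\tilde{p}_a$ gives
\[
  \tilde{p}_a \;=\; \sum_{i=1}^{r}\bigl(a_i-\alpha(a_i)\,a\bigr)\otimes b_i\otimes c_i,
\]
and the $i=1$ summand vanishes by the choice of $a$. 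A direct check shows $\alpha\bigl(a_i-\alpha(a_i)a\bigr)=0$, so the remaining $r-1$ simple tensors lie in $\alpha^{\perp}\otimes B\otimes C$, proving $R(\tilde p_a)\le r-1$.

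For part \ref{item_AFT_any_choice_drops_rank_at_most_1}, I would use the hypothesis $R(p(\alpha))=1$ to write $p(\alpha)=b\otimes c$ for some $b\in B$, $c\in C$. By definition, for any $a\in(\alpha=1)$,
\[
  p \;=\; \tilde{p}_a \;+\; a\otimes p(\alpha) \;=\; \tilde{p}_a \;+\; a\otimes b\otimes c,
\]
so $p$ is the sum of $\tilde p_a$ and a single simple tensor. Subadditivity of rank then gives $r=R(p)\le R(\tilde p_a)+1$, i.e.\ $R(\tilde p_a)\ge r-1$.

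The whole argument is short and essentially mechanical; there is no serious obstacle. The only subtlety worth flagging is that in part \ref{item_AFT_exists_choice_droping_rank} the conclusion $\tilde p_a\in\alpha^{\perp}\otimes B\otimes C$ should be verified explicitly (it is where the specific choice $a=a_1/\alpha(a_1)$ plays its role together with the identity $\alpha(a_i-\alpha(a_i)a)=0$), and that part \ref{item_AFT_any_choice_drops_rank_at_most_1} genuinely requires the rank-one hypothesis on $p(\alpha)$: without it, $a\otimes p(\alpha)$ would only have rank at most $R(p(\alpha))$, yielding the weaker bound $R(\tilde p_a)\ge r-R(p(\alpha))$ instead of the sharp $r-1$.
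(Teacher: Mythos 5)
Your proof is correct and is exactly the standard argument that the paper defers to (it cites \cite{landsberg_michalek_abelian_tensors} and \cite{alexeev_forbes_tsimerman_Tensor_rank_some_lower_and_upper_bounds} rather than reproving it): part one by cancelling the term with $\alpha(a_1)\neq 0$ in a minimal decomposition, part two by subadditivity of rank applied to $p=\tilde p_a+a\otimes b\otimes c$. Both steps, including the verification that $\tilde p_a\in\alpha^{\perp}\otimes B\otimes C$ and the observation that the field plays no role, check out.
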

See \cite[Prop.~3.1]{landsberg_michalek_abelian_tensors} for the proof (note the statement there is over the complex numbers only, but the proof is base field independent) or, alternatively,
   using Lemma~\ref{lem_rank_of_space_equal_to_rank_of_tensor} translate it into 
   the following  statement on linear spaces of tensors:

\begin{prop}[{\cite[Prop. 3.11]{BPR}}]\label{proposition_for_AFT_method_slice_A} 
 Suppose $W \subset  B \otimes C$ is a linear subspace, $R(W) = r$. 
 Assume $w \in W$ is a nonzero element.
 Then:
 \begin{enumerate}
  \item \label{it:proposition_for_AFT_method_slice_A_1} there exists a choice of a complementary subspace $\widetilde{W}\subset W$
     such that $\widetilde{W} \oplus \linspan{w} = W$ and $R(\widetilde{W}) \leq r-1$, and
  \item if in addition $R(w)=1$, then for any choice of the complementary subspace $\widetilde{W} \oplus \linspan{w} = W$
     we have $R(\widetilde{W}) \geq r-1$.
 \end{enumerate}
\end{prop}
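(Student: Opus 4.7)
The plan is to treat the two parts separately, as they go in opposite directions and rely on different tricks. Part (2) is the easy half, which I would dispatch first. Assume $R(w)=1$ and $R(\widetilde{W})=s$, and pick via Proposition \ref{prop_can_choose_decomposition_containing_simple_tensors_from_W} simple tensors $t_1,\dots,t_s$ with $\widetilde{W}\subset\langle t_1,\dots,t_s\rangle$. Since $W=\widetilde{W}+\langle w\rangle$ and $w$ itself is simple, we get $W\subset\langle t_1,\dots,t_s,w\rangle$, a span of $s+1$ rank-one tensors, so $r=R(W)\le s+1$, i.e.\ $R(\widetilde{W})\ge r-1$.

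For part (1), I would build a good complement $\widetilde{W}$ starting from a minimal decomposition of $W$ itself. Apply Proposition \ref{prop_can_choose_decomposition_containing_simple_tensors_from_W} to pick linearly independent simple tensors $s_1,\dots,s_r$ spanning a subspace $U:=\langle s_1,\dots,s_r\rangle$ of dimension exactly $r$ and containing $W$. Write $w=\sum_{i=1}^r c_i s_i\in W\subset U$; since $w\ne 0$, at least one $c_i$ is nonzero, and after reindexing I may assume $c_r\ne 0$. Let $U':=\langle s_1,\dots,s_{r-1}\rangle$ and define
\[
\widetilde{W}:=W\cap U'.
\]
Because $\widetilde{W}\subset U'$ is contained in a span of $r-1$ simple tensors, we have $R(\widetilde{W})\le r-1$ automatically.

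The only thing left — and essentially the one place a slip could occur — is to verify that $\widetilde{W}$ is honestly a codimension-one complement of $\langle w\rangle$ in $W$. The linear independence of $s_1,\dots,s_r$ together with $c_r\ne 0$ ensures $w\notin U'$, so $w\notin\widetilde{W}$. Conversely, the composition $W\hookrightarrow U\twoheadrightarrow U/U'$ has kernel $\widetilde{W}$ and image contained in the one-dimensional quotient $U/U'$, so $\dim W - \dim\widetilde{W}\le 1$. Since $w$ itself witnesses that this quotient is nonzero, equality holds, and $\widetilde{W}\oplus\langle w\rangle=W$. I expect no further obstacle; the whole argument is a short linear-algebraic manipulation once the minimal cover of $W$ by simple tensors has been chosen.
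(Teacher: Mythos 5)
Your proof is correct. The paper itself does not reprove this proposition -- it defers to \cite[Prop.~3.11]{BPR} and \cite[Prop.~3.1]{landsberg_michalek_abelian_tensors} -- and your argument (take a minimal spanning set of simple tensors $s_1,\dots,s_r$ for a space $U\supseteq W$, use a nonzero coordinate of $w$ to drop one of them, and intersect $W$ with the remaining span; for the converse direction append the rank-one element $w$ to a minimal decomposition of $\widetilde{W}$) is exactly the linear-space version of the standard substitution-method proof used in those references.
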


Proposition~\ref{proposition_for_AFT_method_slice_A} and the following Lemma~\ref{lem_dim_2_have_rank_one_matrix}  were crucial in the original proof that the additivity of the rank holds for vector spaces, one of which is $(1,2)$-hook shaped (provided that the base field is algebraically closed). It is presented in \cite[Subsect.~3.2]{BPR}.

After introducing repletion and digestion with respect to a distinguished matrix (\S \ref{ss:replete_pairs} and \S~\ref{ss:digestion}), we present Corollaries~\ref{cor_can_replete_and_digest} and \ref{c:removing_rank_1_slice}, which are 
 stronger versions of Proposition~\ref{proposition_for_AFT_method_slice_A}\ref{it:proposition_for_AFT_method_slice_A_1}. In particular they implies what follows.


\begin{prop}\label{proposition_for_AFT_method_slice_A_improved} 
 Suppose $W = W' \oplus W'' \subset  B \otimes C$ is a linear subspace and  $w \in W'$ is such that  $R(w)=1$.
 Then there exists a choice of a complementary subspace $\widetilde{W'}\subset W'$
     such that 
    \[ \widetilde{W'} \oplus \linspan{w} \oplus W''= W \text{ and } R(\widetilde{W'}\oplus W'') = R(W)-1.\]
  
\end{prop}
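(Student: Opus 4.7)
The plan is to combine Proposition~\ref{proposition_for_AFT_method_slice_A} with a structural argument that respects the direct-sum decomposition $W = W' \oplus W''$. Set $r := R(W)$. The lower bound $R(\widetilde{W'} \oplus W'') \geq r-1$ is immediate for \emph{any} complement $\widetilde{W'}$ of $\langle w \rangle$ in $W'$: because $w$ has rank one, part~(2) of Proposition~\ref{proposition_for_AFT_method_slice_A} applied to $\widetilde{W'} \oplus W''$, viewed as a complement of $\langle w \rangle$ in $W$, delivers this bound automatically. The real work is producing a $\widetilde{W'}$ for which equality actually holds.

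For the upper bound, I would start from any minimal decomposition $V_0 \supset W$ of dimension $r$ spanned by simple tensors $s_1,\ldots,s_r$. Since $w \in V_0$ is itself simple, a routine change of basis lets me assume $s_1 = w$, so that $V_0 = \langle w, s_2,\ldots, s_r \rangle$. Setting $V_1 := \langle s_2,\ldots, s_r \rangle$ and $\widetilde{W} := W \cap V_1$ produces a complement of $\langle w \rangle$ in $W$ with $R(\widetilde{W}) \leq r-1$; this is essentially the content of Proposition~\ref{proposition_for_AFT_method_slice_A}\ref{it:proposition_for_AFT_method_slice_A_1}. If one could further arrange $W'' \subset V_1$, then $\widetilde{W}$ would split automatically as $\widetilde{W'} \oplus W''$ with $\widetilde{W'} := W' \cap V_1$ a complement of $\langle w \rangle$ in $W'$, and the argument would be complete.

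The main obstacle is exactly this last alignment requirement: an arbitrary minimal decomposition $V_0$ need not contain $W''$ inside $V_1$, and naive modifications of the form $s_i \mapsto s_i + \lambda_i w$ destroy the rank-one property of the individual $s_i$. To sidestep this, I would appeal to the stronger, direct-sum-aware version of the substitution method stated as Corollary~\ref{c:removing_rank_1_slice}, which is produced by the repletion-and-digestion machinery developed in Sections~\ref{ss:replete_pairs} and~\ref{ss:digestion}. That corollary is designed precisely to excise a single rank-one slice while preserving the decomposition $W = W' \oplus W''$, and it directly supplies a $\widetilde{W'}$ with $R(\widetilde{W'} \oplus W'') \leq r-1$. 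Combined with the automatic lower bound from the first paragraph, this forces equality and concludes the proof.
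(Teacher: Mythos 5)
Your proposal is correct and follows essentially the same route as the paper, which states this proposition as a consequence of the repletion-and-digestion machinery of Sections~\ref{ss:replete_pairs}--\ref{ss:digestion} rather than giving a standalone argument; your first two paragraphs (the automatic lower bound from Proposition~\ref{proposition_for_AFT_method_slice_A}(2) and the observation that $w$ can be taken as a basis vector of a minimal decomposition) correctly set up the appeal to that machinery. The only quibble is the citation: Corollary~\ref{cor_can_replete_and_digest}, applied with $v=w\in\Prime$ (the pair being automatically replete since $w\in W'$, so that Lemma~\ref{lem_dim_S'_v} gives $W'=\linspan{w}\oplus\widetilde{W'}$), is the statement that applies verbatim, whereas Corollary~\ref{c:removing_rank_1_slice} concerns a rank-one slice in the $C$-direction under hook-shape hypotheses and would require swapping the roles of the factors.
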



This approach also simplifies the original proof of additivity of the rank holds for vector spaces, one of which is $(1,2)$-hook shaped 
and let us to prove its generalization to arbitrary fields, i.e. Corollary~\ref{cor_1_2_hook_shaped} (and Theorem~\ref{thm_additivity_rank_intro_filip}\ref{it:4_thm_additivity_rank_intro_filip} from \fullref{ch:introduction}).

The proof of the following lemma is a dimension count, see also \cite[Prop.~3.2.11]{rupniewski_mgr}.

\begin{lem}[{\cite{BPR}, Lemma~3.16}]\label{lem_dim_2_have_rank_one_matrix}
    Suppose $\kk$ is algebraically closed (of any characteristic) 
       and $0 \neq p\in A\otimes B \otimes \kk^2$ is concise and $\dim A \geq \dim B$.
    Then, 
there exists a rank one matrix in $p(A^*)\subset B\otimes \kk^2$.
\end{lem}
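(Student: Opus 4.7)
The plan is to reduce the assertion to a standard intersection-dimension count in the projective space $\PP(B \otimes \kk^2)$. The only piece of conciseness I actually need is $A$-conciseness of $p$, which by the remark at the end of Section~\ref{section_slices} is equivalent to the slice map $p\colon A^\ast \to B \otimes \kk^2$ being injective. Consequently $W := p(A^\ast)$ is a linear subspace of $B \otimes \kk^2$ of dimension exactly $\bfa$, and $\PP W$ is a projective linear (hence irreducible) subspace of $\PP(B \otimes \kk^2)$ of dimension $\bfa - 1$.

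Next I would identify the rank one matrices in $B \otimes \kk^2$ with the points of the Segre variety $\Seg_{B,\kk^2} \subset \PP(B \otimes \kk^2)$ from Definition~\ref{d:Segre_variety}, whose dimension is $(\bfb - 1) + 1 = \bfb$. The ambient projective space has dimension $2\bfb - 1$, and the hypothesis $\bfa \geq \bfb$ is exactly what is needed for the key dimension inequality
\[
  \dim \PP W + \dim \Seg_{B,\kk^2} \;=\; (\bfa - 1) + \bfb \;\geq\; 2\bfb - 1 \;=\; \dim \PP(B \otimes \kk^2).
\]

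The remaining step is a single invocation of the classical projective dimension theorem over the algebraically closed field $\kk$: two irreducible projective subvarieties of $\PP^N$ whose dimensions sum to at least $N$ must meet nontrivially. Applied to $\PP W$ and $\Seg_{B,\kk^2}$ (both irreducible, the latter as the image of $\PP B \times \PP^1$ under the Segre embedding), this produces a point of $\PP W \cap \Seg_{B,\kk^2}$, which lifts to a nonzero rank one matrix in $W = p(A^\ast)$, as required. The only genuine obstacle is the reliance on algebraic closure: the dimension theorem fails without it (for instance a real line and a smooth real conic in $\PP^2_{\RR}$ can be disjoint), and this is precisely why the hypothesis on $\kk$ is imposed. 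Beyond that, the argument is entirely routine, and no use is made of $B$- or $\kk^2$-conciseness.
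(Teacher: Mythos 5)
Your argument is correct and is essentially the paper's own: the article describes its proof of Lemma~\ref{lem_dim_2_have_rank_one_matrix} as a dimension count relying on the projective dimension theorem (Hartshorne, Thm.~I.7.2), which is exactly your intersection of $\PP W$ (of dimension $\bfa-1$ by $A$-conciseness) with $\Seg_{B,\kk^2}$ (of dimension $\bfb$) inside $\PP^{2\bfb-1}$. Your side remarks --- that only $A$-conciseness is used and that algebraic closedness is indispensable --- match the paper's own comments following the lemma.
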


Our proof of Lemma~\ref{lem_dim_2_have_rank_one_matrix} does not work for non algebraically closed fields, since we rely on \cite[Thm~I.7.2]{hartshorne}.
In this article, we use Lemma~\ref{lem_dim_2_have_rank_one_matrix} and the generalization of Proposition~\ref{proposition_for_AFT_method_slice_A} 
(Corollary~\ref{cor_can_replete_and_digest}), 
to prove that rank additivity holds for a certain small dimensional spaces, see Corollary~\ref{cor_additivity_C_443_443} (and Theorem~\ref{thm_additivity_rank_intro_filip}\ref{it:5_thm_additivity_rank_intro_filip}). 

 
\section{Rank one matrices and additivity of the tensor rank}\label{s:rank_one_matrices_and_additive_rank}

If in the linear space $W'$ or $W''$ we find a rank one matrix, then we have a good starting point for an attempt 
   to prove the additivity of the rank.
Throughout this section we will change this observation to a formal statement and prove, that if there is a rank one matrix in the linear spaces,
   then either the additivity holds or there exists a ``smaller'' example of failure of the additivity. 
In Section~\ref{sect_proofs_of_main_results_on_rank} we exploit several versions of this claim 
   in order to prove Theorem~\ref{thm_additivity_rank_intro_filip}.

Throughout this section we follow Notations~\ref{notation_V_Seg} (denoting the rank one elements in a vector space by 
                                                                             the subscript $\cdot_{Seg}$),
    \ref{notation} (introducing the vector spaces $\fromto{A, A'}{C''}$ and their dimensions $\fromto{\bfa,\bfa'}{\bfc''}$),
       and also~\ref{notation_projection} (which explains the conventions for projections $\fromto{\pi_{A'}, \pi_{A''}}{\pi_{C''}}$ 
       and vector spaces $E', E'', F', F''$, which measure how much the decomposition $V$ of $W$ sticks out 
       from the direct sum $B'\otimes C' \oplus B''\otimes C''$). In this chapter, the letter $V$ will denote the decomposition of a subspace $W$, as defined at the beginning of Subsection \ref{ss:proj_and_decomp}.
       We will also frequently use Notation~\ref{notation_2} and Proposition~\ref{prop_translete_to_subspace_language}. Together they define a direct sum tensor $p=p'\oplus p''$ and let us translate the problem of additivity of rank for tensors to the additivity of rank for the corresponding vector spaces $W, W', W''$.

\subsection{Combinatorial splitting of the decomposition}

We carefully analyze 
   the structure of the rank one matrices in $V$.
We will distinguish seven types of such matrices.

\begin{lem}[{\cite[Lem. 4.1.]{BPR}}]\label{lemma_reduction}
Every element of $V_{\Seg}\subset \PP(B\ts C)$ lies in the projectivization of one of the following subspaces
  of $B\ts C$:
\begin{enumerate}
\item \label{item_prime_bis}
$B'\otimes C'$, $B''\otimes C''$,  \hfill (\Prime, \Bis)
\item  \label{item_vertical_and_horizontal}
$E'\otimes (C'\oplus F'')$,
$E''\otimes (F'\oplus C'')$,\hfill (\HL, \HR)\\
$(B'\oplus E'')\otimes F'$,
$(E'\oplus B'')\otimes F''$,\hfill (\VL, \VR) \item \label{item_mixed}
$(E'\oplus E'')\otimes (F'\oplus F'')$. \hfill (\Mix)
\end{enumerate}
\end{lem}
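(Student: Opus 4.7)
My plan is to take an arbitrary $[s]\in V_{\Seg}$, choose a rank-one representative $s=b\otimes c \in V$, and decompose $b=b'+b''\in B'\oplus B''$ and $c=c'+c''\in C'\oplus C''$. The proof will then be a finite case analysis on which of the four components $b',b'',c',c''$ vanish.

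The preliminary step is to extract from the definitions of $E',E'',F',F''$ four component-wise implications. Applying $\pi_{C'}$ gives
\[
\pi_{C'}(s)=b\otimes c''=b'\otimes c''+b''\otimes c'',
\]
which by definition of $E'$ lies in $(E'\oplus B'')\otimes C''=(E'\otimes C'')\oplus(B''\otimes C'')$. Extracting the $B'\otimes C''$ summand forces $b'\otimes c''\in E'\otimes C''$, so whenever $c''\neq 0$ one concludes $b'\in E'$. The same argument applied to $\pi_{C''},\pi_{B'},\pi_{B''}$ yields three symmetric implications, so in total
\[
c''\neq 0\Rightarrow b'\in E',\quad c'\neq 0\Rightarrow b''\in E'',\quad b''\neq 0\Rightarrow c'\in F',\quad b'\neq 0\Rightarrow c''\in F''.
\]

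With these in hand, the seven types follow by inspection. If $b''=c''=0$ (resp.\ $b'=c'=0$) then $s\in B'\otimes C'$ (resp.\ $B''\otimes C''$), yielding (Prime) or (Bis). When exactly one of the four components vanishes, two of the implications become active: for example if $b''=0$ but $b',c',c''\neq 0$, then $b'\in E'$ and $c''\in F''$, giving $s=b'\otimes(c'+c'')\in E'\otimes(C'\oplus F'')$, i.e.\ (HL); the three symmetric single-vanishing patterns produce (HR), (VL), (VR). The two remaining ``diagonal'' patterns $b''=c'=0$ and $b'=c''=0$ land in $E'\otimes F''$ and $E''\otimes F'$ respectively, each of which is contained in one of the hook-type subspaces already listed. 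Finally, if all four components are nonzero then all four implications fire simultaneously and $s\in(E'\oplus E'')\otimes(F'\oplus F'')$, i.e.\ (Mix).

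The argument is essentially bookkeeping, so I do not anticipate a serious obstacle. The only delicate point is that in each single-vanishing subcase one must combine the two active implications into the tightest subspace from the list --- for example refining the trivial inclusion $s\in B'\otimes C$ to $s\in E'\otimes(C'\oplus F'')$ --- which becomes immediate once $s$ is written as a sum over the two surviving products of components.
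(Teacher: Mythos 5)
Your proof is correct and is essentially the argument given in the cited reference \cite[Lem.~4.1]{BPR} (this paper states the lemma without reproducing the proof): decompose a rank-one element as $(b'+b'')\otimes(c'+c'')$, derive the four implications from the minimality of $E',E'',F',F''$ under the projections, and run the case analysis on vanishing components. No gaps.
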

The spaces in \ref{item_prime_bis} are purely contained in the original direct summands, hence, in some sense, they are the easiest to deal with (we will show how to ``get rid'' of them and construct a smaller example justifying a potential lack of additivity).\footnote{The word \Bis{} comes from the Polish way of pronouncing the ${}''$ symbol.}
The spaces in \ref{item_vertical_and_horizontal} 
  stick out of the original summand, but only in one direction, either horizontal (\HL, \HR), or vertical (\VL, \VR)\footnote{Here, the letters ``H, V, L, R'' stand for 
  ``horizontal, vertical, left, right'' respectively.}.  
The space in \ref{item_mixed} is mixed and it sticks out in all directions. It is the most difficult to deal with and we expect, that the typical counterexamples to the additivity of the rank will have mostly (or only) such mixed matrices in their minimal decomposition. 
The mutual configuration and layout of those spaces in the case $(\bfb',\bfb'',\bfc',\bfc'')=(3,3,3,3)$, $(\bfe',\bfe'',\bff',\bff'')=(1,2,1,1)$ is illustrated in Figure~\ref{fig_layout_of_Prim__Mixed}. We use our usual convention, that bold lower case  letters denote dimensions of the spaces denoted by capital letter. 

 \begin{figure}
\centering
\includegraphics[scale=0.18]{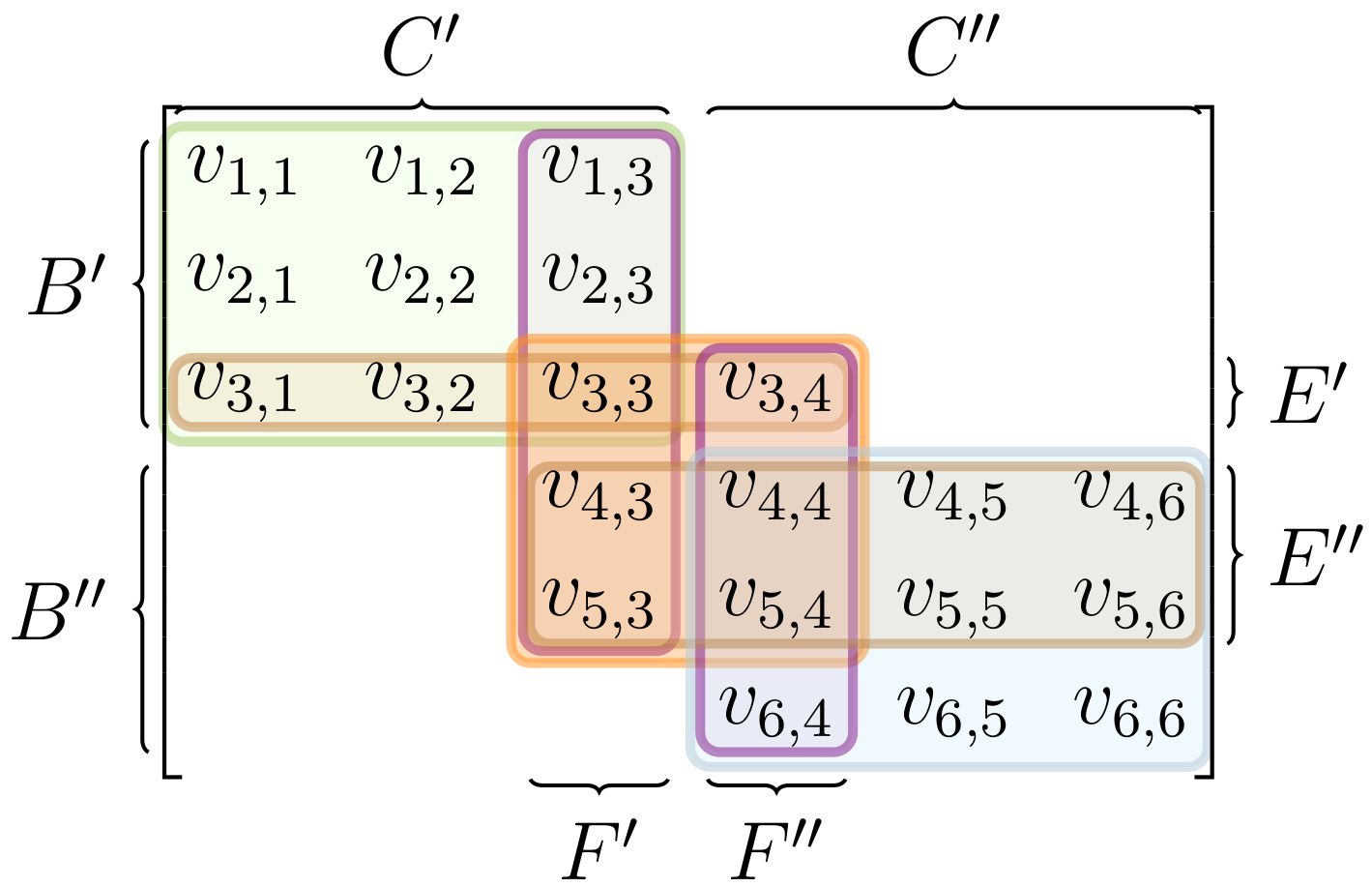}
 \caption{We use Notation~\ref{notation_projection}. 
In the case $(\bfb',\bfb'',\bfc',\bfc'')=(3,3,3,3)$, $(\bfe',\bfe'',\bff',\bff'')=(1,2,1,1)$,
choose a basis of $E'$ and a completion to a basis of $B'$ and, 
similarly, bases for  $(E'', B''), (F', C'),(F'',C'')$. We can represent the elements of $V_{\Seg} \subset B \otimes C$ as matrices in one of the following subspaces:
\Prime{} (corresponding to the top-left green rectangle), 
\Bis{} (bottom-right blue rectangle), 
\VL{} (purple with entries $v_{1,3},v_{2,3},v_{3,3},v_{4,3},v_{5,3}$), 
\VR{} (purple with entries $v_{3,4},v_{4,4},v_{5,4},v_{6,4}$), 
\HL{} (brown with entries $v_{3,1},v_{3,2},v_{3,3},v_{3,4}$),
\HR{} (brown with entries $v_{4,3},v_{4,4},v_{4,5},v_{4,6},v_{5,3},v_{5,4},v_{5,5},v_{5,6}$),
and \Mix{} (middle orange square with entries $v_{3,3},v_{3,4},v_{4,3},v_{4,4},v_{5,3},v_{5,4}$).
} \label{fig_layout_of_Prim__Mixed}
\end{figure}

As in Lemma~\ref{lemma_reduction} every element of $V_{Seg}\subset \PP(B\otimes C)$ 
   lies in one of seven subspaces of $B\otimes C$. 
These subspaces may have nonempty intersection.
We will now explain our convention with respect to choosing a basis of $V$ consisting of elements of $V_{\Seg}$.

Here and throughout the thesis, by $\sqcup$ we denote the disjoint union.

\begin{notation}\label{notation_Prime_etc}
  We choose a basis $\ccB$ of $V$ in such a way that:
  \begin{itemize}
   \item $\ccB$ consist of rank one matrices only,
   \item $\ccB = \Prime \sqcup \Bis \sqcup \HL \sqcup \HR \sqcup \VL \sqcup \VR \sqcup \Mix$, 
           where each of \Prime, \Bis, \HL, \HR, \VL, \VR, and \Mix{} 
           is a finite set of rank one matrices of the respective type as in Lemma~\ref{lemma_reduction} 
           (for instance, $\Prime \subset B'\otimes C'$, $\HL\subset E'\otimes (C'\oplus F'')$, etc.).
   \item $\ccB$ has as many elements of $\Prime$ and $\Bis$ as possible, subject to the first two conditions,
   \item $\ccB$ has as many elements of $\HL$, $\HR$, $\VL$ and $\VR$ as possible, subject to all of the above conditions.
  \end{itemize}
  Let $\prim$ be the number of elements of $\Prime$ (equivalently, $\prim = \dim\linspan{\Prime}$)
     and analogously define $\bis$, $\hl$, $\hr$, $\vl$, $\vr$, and $\mix$.
  The choice of $\ccB$ need not  be unique, but we fix one for the rest of the chapter.
  Instead, the numbers $\prim$, $\bis$, and $\mix$ are uniquely determined by $V$
  (there may be some non-uniqueness in dividing between  $\hl$, $\hr$, $\vl$, $\vr$).
\end{notation}

Thus, to each decomposition we associated a sequence of seven non-negative integers $(\fromto{\prim}{\mix})$.
We now study the inequalities between these integers and exploit them to get theorems about the additivity of the rank.
  
\begin{prop}[{\cite[Prop. 4.3.]{BPR}}]\label{prop_projection_inequalities}
 In Notations~\ref{notation_projection} and \ref{notation_Prime_etc} the following inequalities hold:
 \begin{enumerate}
  \item \label{item_projection_inequality_short_prim}
        $\prim + \hl + \vl 
  + \min\big(\mix,\bfe' \bff'  \big) \geq R(W')$,
  \item \label{item_projection_inequality_short_bis}
        $\bis + \hr  + \vr 
  + \min\big(\mix,\bfe'' \bff'' \big) \geq R(W'')$,
 \end{enumerate}
\end{prop}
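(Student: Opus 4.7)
The plan is to prove only inequality \ref{item_projection_inequality_short_prim}; inequality \ref{item_projection_inequality_short_bis} then follows by the symmetric exchange of $({}',{}'')$. The key move is to apply the projection
\[
  \pi := \pi_{C''}\circ\pi_{B''}\colon B\otimes C\;\longrightarrow\; B'\otimes C'
\]
to the minimal decomposition $V$ and to read a rank-one spanning set of $\pi(V)$ directly off the seven-part partition of the basis $\ccB$ fixed in Notation~\ref{notation_Prime_etc}.

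First I would observe that $W'\subset V$ already lies in $B'\otimes C'$, so $\pi$ acts as the identity on $W'$ and hence $W'=\pi(W')\subset\pi(V)$. By the very definition of $R$ for linear subspaces, any rank-one spanning set of a subspace containing $\pi(V)$ also contains $W'$, so it is enough to produce a rank-one spanning set of $\pi(V)$ of the claimed size.

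Next I would push each of the seven types of elements of $\ccB$ through $\pi$, using Lemma~\ref{lemma_reduction} to locate them: the $\prim$ elements of $\Prime$ lie in $B'\otimes C'$ and are fixed; the $\hl$ elements of $\HL\subset E'\otimes(C'\oplus F'')$ are sent by $\pi_{C''}$ into $E'\otimes C'$; the $\vl$ elements of $\VL\subset(B'\oplus E'')\otimes F'$ are sent by $\pi_{B''}$ into $B'\otimes F'$; the $\mix$ elements of $\Mix\subset(E'\oplus E'')\otimes(F'\oplus F'')$ are sent by $\pi$ into $E'\otimes F'$. The remaining three types are killed: $\Bis\subset B''\otimes C''$ dies under both factors of $\pi$, $\HR\subset E''\otimes(F'\oplus C'')$ dies under $\pi_{B''}$ since $E''\subset B''$, and $\VR\subset(E'\oplus B'')\otimes F''$ dies under $\pi_{C''}$ since $F''\subset C''$. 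Every non-zero image is still a simple tensor, so $\pi(V)$ is spanned by at most $\prim+\hl+\vl+\mix$ rank-one tensors.

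The remaining refinement replaces $\mix$ by $\min(\mix,\bfe'\bff')$. The $\mix$ images of $\Mix$ all sit inside $E'\otimes F'$, which itself is linearly spanned by the $\bfe'\bff'$ elementary rank-one tensors $e_i\otimes f_j$ (for any bases of $E'$ and $F'$). Consequently, whichever of the two sets $\{\pi(m):m\in\Mix\}$ and $\{e_i\otimes f_j\}$ is shorter furnishes a rank-one spanning set of a subspace of $E'\otimes F'$ containing $\langle\pi(\Mix)\rangle$. Combining it with the rank-one images from $\Prime$, $\HL$, $\VL$ gives at most $\prim+\hl+\vl+\min(\mix,\bfe'\bff')$ rank-one tensors whose span contains $\pi(V)\supset W'$, which is the desired bound. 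The only mildly subtle step is this $\min$-refinement; everything else is routine bookkeeping with the types of Lemma~\ref{lemma_reduction}, so I do not anticipate a serious obstacle.
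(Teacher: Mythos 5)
Your proof is correct and follows essentially the same route as the source the paper cites for this statement ([BPR, Prop.~4.3]): apply $\pi_{C''}\pi_{B''}$ to the distinguished basis of $V$, note that $\Bis$, $\HR$, $\VR$ die while $\Prime$, $\HL$, $\VL$, $\Mix$ survive as rank-one tensors spanning a space containing $W'$, and replace the images of $\Mix$ by the $\bfe'\bff'$ elementary spanning tensors of $E'\otimes F'$ to obtain the $\min$. No gaps.
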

\renewcommand{\theenumi}{(\alph{enumi})}

\begin{cor}[{\cite[Cor. 4.5.]{BPR}}]\label{cor_inequalities_HL__Mix}
  Assume that the additivity fails for $W'$ and $W''$, that is, $d=R(W') + R(W'')-R(W'\oplus W'') >0$.
  Then the following inequalities hold:
 \begin{enumerate}
    \item \label{item_mix_at_least_1}
          $\mix \geq d \ge 1$, 
    \item \label{item_mix_and_horizontal_at_least_1_plus_e}
        $\hl + \hr + \mix \geq \bfe' + \bfe'' + d \ge 3$,
    \item \label{item_mix_and_vertical_at_least_1_plus_f}
        $\vl + \vr + \mix \geq \bff' + \bff'' + d \ge 3$.
 \end{enumerate}
\end{cor}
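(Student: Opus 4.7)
My plan is to combine Proposition~\ref{prop_projection_inequalities} with an application of Proposition~\ref{prop_bound_on_rank_for_non_concise_decompositions_for_vector_spaces}(b) to the projected decompositions $\pi_{C'}(V)$ and $\pi_{C''}(V)$ (and their $B$-analogues for (c)). Throughout, I will use the basic identity $R(W)=\dim V=\prim+\bis+\hl+\hr+\vl+\vr+\mix$ coming from $\ccB$ being a basis of the minimal decomposition $V$, the failure-of-additivity hypothesis $R(W')+R(W'')=R(W)+d$ with $d\geq 1$, and (WLOG) the conciseness of $W'$ in $B'$, $C'$ and of $W''$ in $B''$, $C''$.

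For part (a), I would add the two inequalities of Proposition~\ref{prop_projection_inequalities}. Substituting the basic identity and bounding each minimum above by $\mix$, the resulting inequality simplifies to $\min(\mix,\bfe'\bff')+\min(\mix,\bfe''\bff'')\geq\mix+d$, which forces $\mix\geq d$. The hypothesis $d\geq 1$ finishes (a).

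For part (b), I would first check type-by-type (analogously to the proof sketch of Proposition~\ref{prop_projection_inequalities}) that $\pi_{C'}$ kills $\Prime$ and $\VL$ and sends every other basis element to a rank-one simple tensor in $(E'\oplus B'')\otimes C''$. Hence $\pi_{C'}(V)\subseteq\langle s_1,\ldots,s_r\rangle$ with $r\leq\bis+\hl+\hr+\vr+\mix$ and $s_i=a_i^2\otimes c_i$ simple, and also $W''=\pi_{C'}(W)\subset\pi_{C'}(V)$. I would then apply Proposition~\ref{prop_bound_on_rank_for_non_concise_decompositions_for_vector_spaces}(b) with ambient $A'_2=B''$. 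The key step is to verify $\dim\langle a_1^2,\ldots,a_r^2\rangle=\bfe'+\bfb''$: the $B'$-components of the $s_i$ span $E'$ by minimality of $E'$, while the $B''$-components span all of $B''$ by $B''$-conciseness of $W''$ (otherwise $W''$ would be contained in a proper $B''$-subspace). Proposition~\ref{prop_bound_on_rank_for_non_concise_decompositions_for_vector_spaces}(b) then yields
\[
  R(W'')+\bfe'\leq\bis+\hl+\hr+\vr+\mix,
\]
and the symmetric argument using $\pi_{C''}$ gives
\[
  R(W')+\bfe''\leq\prim+\hl+\hr+\vl+\mix.
\]
Adding these and substituting the basic identity together with $R(W')+R(W'')=R(W)+d$ yields $\bfe'+\bfe''+d\leq\hl+\hr+\mix$. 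The lower bound $\bfe'+\bfe''+d\geq 3$ then follows from $d\geq 1$ together with $\bfe',\bfe''\geq 1$, which holds because the vanishing of any of $E',E'',F',F''$ would force additivity by Proposition~\ref{prop_SAC_if_E'=0}, contradicting $d\geq 1$. Part (c) is identical under the swap $B\leftrightarrow C$: the projections $\pi_{B'},\pi_{B''}$ replace $\pi_{C''},\pi_{C'}$ and $F',F''$ take the role of $E'',E'$.

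The hard part will be the verification that $\dim\langle a_1^2,\ldots,a_r^2\rangle=\bfe'+\bfb''$ in (b) (and the analogous identity in (c)). The $E'$-summand translates directly from the minimality of $E'$, but the $B''$-summand requires combining $B''$-conciseness of $W''$ with $W''\subset\langle s_i\rangle$. Once this is in place, the rest of the argument is just algebra on the resulting inequalities, and the symmetry between (b) and (c) ensures no additional work is needed.
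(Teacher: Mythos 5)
Your argument is correct and is essentially the proof from \cite{BPR} of the cited Corollary~4.5: part (a) by summing the two inequalities of Proposition~\ref{prop_projection_inequalities}, parts (b) and (c) by re-deriving the remaining projection inequalities (e.g.\ $R(W'')+\bfe'\le \bis+\hl+\hr+\vr+\mix$) via $\pi_{C'}$, $\pi_{C''}$ and Proposition~\ref{prop_bound_on_rank_for_non_concise_decompositions_for_vector_spaces}, and the lower bound $3$ from Proposition~\ref{prop_SAC_if_E'=0}. One small point: the ``WLOG concise'' is not automatic here, since shrinking $B''$ could change $V$ and hence the type counts; but your dimension count goes through verbatim if you apply Proposition~\ref{prop_bound_on_rank_for_non_concise_decompositions_for_vector_spaces} with $A_2'$ equal to the support of $W''$ in $B''$ rather than $B''$ itself, so no conciseness assumption is actually needed.
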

 
\renewcommand{\theenumi}{(\roman{enumi})}

%

\subsection{Replete pairs}\label{ss:replete_pairs}

This subsection is a generalization of {\cite[Sect.~4.2.]{BPR}}.
We distinguish a class of pairs $W', W''$ with particularly nice properties.

\begin{defin}
We say $(W', W'')$ is \emph{replete with respect to} $v \in \Prime$ (or  $\Bis$), if $v \in W'$ (resp. $v \in W''$). 
   Similarly, we say $(W', W'')$ is \emph{replete} if it is replete with respect to $v$ for all $v \in \Prime \sqcup \Bis$. 
\end{defin}

\begin{rem}
   Strictly speaking, the notion of \emph{replete pair} depends also on the minimal decomposition $V$. 
   But as always we consider a pair $W'$ and $W''$ with a fixed decomposition
     $V=\linspan{V_{\Seg}} \supset W'\oplus W''$, so we refrain from mentioning $V$ in the notation.
\end{rem}

The first important observation is, that as long as we look for pairs that fail to satisfy the additivity, we are free to replenish any pair. 
More precisely, for any fixed $W'$, $W'', V$ and $v \in \Prime$ (or $\Bis$) define the \emph{repletion} of $(W', W'')$ with respect to $v$ as the pair $(\repletionV{W'},\repletionV{W''})$:
\begin{equation}
\begin{aligned}
   \repletionV{W'}:&=W'+\linspan{v}, & 
   \repletionV{W''}:&=W'', &  
   \repletionV{W}:&=\repletionV{W'}\oplus \repletionV{W''}. \\
    \text{or resp.} & &  \\
   \repletionV{W'}:&=W', & 
   \repletionV{W''}:&=W'' +\linspan{v}, &  
   \repletionV{W}:&=\repletionV{W'}\oplus \repletionV{W''}.
\end{aligned}
\end{equation}
The result of consecutive repletion with respect to all elements of $\Prime$ and $\Bis$ will be denoted by $(\repletion{W'},\repletion{W''})$. 
This latter notion agrees with one introduced in  \cite[Subsect. 4.2]{BPR}.

\begin{prop}\label{p:does_not_hurt_to_replenish_v}
   For any $(W', W'')$ and $v \in \Prime$ (or $\Bis$)
we have:
   \begin{align*}
        R(W' ) \le R(\repletionV{W' }) & \le R(W' ) + (\dim \repletionV{W' } - \dim W' ),\\
        R(W'') \le R(\repletionV{W''}) & \le R(W'') + (\dim \repletionV{W''} - \dim W''),\\
        R(\repletionV{W}) & = R(W).
   \end{align*}
   In particular, if the additivity of the rank fails for $(W',W'')$, then it also fails for 
      $(\repletionV{W'},\repletionV{W''})$. 
   Moreover,
   \begin{enumerate}
    \item \label{it:repletion_has_the_same_decomposition_v} 
          $V$ is a minimal decomposition of $\repletionV{W}$; 
             in particular, the same distinguished basis 
             $\Prime\sqcup \Bis \sqcup\dotsb\sqcup \Mix$ works for both $W$ and  $\repletionV{W}$.
    \item \label{it:repletion_is_replete_v}
          $(\repletionV{W'}, \repletionV{W''})$ is a replete pair with respect to $v$.
    \item \label{it:gap_is_preserved_under_repletion_v}
          The gaps 
          $\gap{\repletionV{W'}}$, $\gap{\repletionV{W''}}$, and  $\gap{\repletionV{W}}$, 
               are at most (respectively) 
               $\gap{W'}$, $\gap{W''}$, and  $\gap{W}$.
   \end{enumerate}
\end{prop}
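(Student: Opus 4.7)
The plan is to handle each claim by combining monotonicity of rank with the fact that $v$ already lies in the fixed minimal decomposition $V$. Without loss of generality I assume $v \in \Prime$; the $\Bis$ case is completely symmetric. Then $\repletionV{W'} = W' + \linspan{v}$ and $\repletionV{W''} = W''$, and the only nontrivial change is on the primed side.

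First I would verify the two-sided bound for $R(\repletionV{W'})$. The lower inequality $R(W') \le R(\repletionV{W'})$ is immediate from the monotonicity of rank for vector subspaces (using the characterization in Proposition~\ref{prop_can_choose_decomposition_containing_simple_tensors_from_W}): any span of rank-one tensors containing $\repletionV{W'}$ contains $W'$. For the upper inequality, if $v \in W'$ there is nothing to prove; otherwise $\dim\repletionV{W'} = \dim W' + 1$, and adjoining the rank-one element $v$ to any minimal decomposition of $W'$ produces a decomposition of $\repletionV{W'}$ of length $R(W')+1$. The statements for $W''$ are trivial since repletion leaves it unchanged.

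The key equality is $R(\repletionV{W}) = R(W)$. Since $v$ belongs to the distinguished basis $\ccB$ of $V$, we have $v \in V$, and hence $\repletionV{W} = W + \linspan{v} \subseteq V$. Because $V$ is a minimal decomposition of $W$, the set $\PP V = \langle V_{\Seg}\rangle$ is linearly spanned by $\dim V = R(W)$ rank-one tensors, so $R(\repletionV{W}) \le R(W)$. The reverse inequality is again monotonicity. This simultaneously proves \ref{it:repletion_has_the_same_decomposition_v}: $V$ remains spanned by rank-one elements, contains $\repletionV{W}$, and has dimension $R(\repletionV{W})$, so it is a minimal decomposition, and the same distinguished basis $\ccB$ (hence the same integers $\prim, \bis, \hl, \hr, \vl, \vr, \mix$) may be reused. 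Part \ref{it:repletion_is_replete_v} is by construction: $v \in \repletionV{W'}$.

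For \ref{it:gap_is_preserved_under_repletion_v} I would argue by direct subtraction: the upper bound on $R(\repletionV{W'})$ rewrites as $R(\repletionV{W'}) - \dim\repletionV{W'} \le R(W') - \dim W'$, and combining $R(\repletionV{W}) = R(W)$ with $\dim\repletionV{W} \ge \dim W$ gives the analogous inequality for $W$. Finally, the persistence of failed additivity is a one-line consequence of everything above: if $R(W) < R(W') + R(W'')$, then $R(\repletionV{W}) = R(W) < R(W') + R(W'') \le R(\repletionV{W'}) + R(\repletionV{W''})$. I do not anticipate a genuine obstacle, because the entire argument rests on the observation that $v$ was already present in $V$, so the ambient minimal decomposition never grows — only the accounting of which rank-one elements we assign to $W'$ versus $W''$ is modified.
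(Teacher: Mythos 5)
Your proposal is correct and follows essentially the same route as the paper: lower bounds by inclusion, upper bounds by adjoining the rank-one matrix $v$ to a minimal decomposition, the equality $R(\repletionV{W})=R(W)$ from $\repletionV{W}\subseteq V$, and the remaining items as immediate consequences. No gaps.
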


\begin{proof}
    Since $W' \subset \repletionV{W'}$, the inequality $R(W')\le R(\repletionV{W'})$ is clear.
    Moreover, $\repletionV{W'}$ is spanned by $W'$ if $\dim \repletionV{W' } = \dim W' $, or by $W'$ with additional matrix $v$ in the other case. 
       The matrix $v$ is of rank one, so $R(\repletionV{W'}) \le  R(W') + (\dim \repletionV{W' } - \dim W' )$.
    The inequalities about ${}''$ and $R(W)\le R(\repletionV{W})$ follow similarly.
    
    Further $\repletionV{W} \subset V$, thus $V$ is a decomposition of $\repletionV{W}$.
    Therefore also $R(\repletionV{W}) \le \dim V = R(W)$, showing $R(\repletionV{W})  = R(W)$ 
       and \ref{it:repletion_has_the_same_decomposition_v}.
    Item~\ref{it:repletion_is_replete_v} follows from \ref{it:repletion_has_the_same_decomposition_v},
       while \ref{it:gap_is_preserved_under_repletion_v} is a rephrasement of the initial inequalities.
\end{proof}

Moreover, if one of the inequalities of Lemma~\ref{lemma_bound_r'_e'_R_w'} is an equality, 
    then the respective $W'$ or $W''$ is not affected by the repletion.

\begin{lemma}\label{lem_if_inequality_then_Wbis_is_replete}
    If, say, $R(W') + \bfe'' = R(W) - \dim W''$, then for any $v \in \Prime$ (or $\Bis$) we have $W'' = \repletionV{W''}$. The analogous statements hold for the other equalities coming from replacing $\le$ by $=$ in 
       Lemma~\ref{lemma_bound_r'_e'_R_w'}.
\end{lemma}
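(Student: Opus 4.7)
The plan is to argue by contradiction using the same inequality from Lemma~\ref{lemma_bound_r'_e'_R_w'} applied now to the repleted pair, exploiting the fact that repletion preserves both the minimal decomposition $V$ and the rank of $W$.

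First I would handle the trivial case: if $v \in \Prime$, then by the very definition of repletion with respect to $v \in \Prime$ the second summand is untouched, so $\repletionV{W''}=W''$. The substantive case is $v\in \Bis$, where I need to show $v$ must already lie in $W''$.

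Suppose toward a contradiction that $v\in \Bis$ but $v \notin W''$. Then $\repletionV{W''} = W'' + \linspan{v}$ has dimension $\dim W'' + 1$, while $\repletionV{W'}=W'$. By Proposition~\ref{p:does_not_hurt_to_replenish_v}\ref{it:repletion_has_the_same_decomposition_v}, the same subspace $V$ is still a minimal decomposition of $\repletionV{W} = W' \oplus \repletionV{W''}$, so $R(\repletionV{W}) = R(W)$. The key observation is that the spaces $E'$, $E''$, $F'$, $F''$ of Notation~\ref{notation_projection} depend only on the minimal decomposition $V$ (together with the ambient direct-sum splittings), and neither $V$ nor the ambient decompositions are modified by repletion. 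Hence the value of $\bfe''$ for the new pair $(\repletionV{W'},\repletionV{W''})$ is the same $\bfe''$ as for the original pair.

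Now applying Lemma~\ref{lemma_bound_r'_e'_R_w'} to the repleted pair yields
\[
  R(\repletionV{W'}) + \bfe'' \;\le\; R(\repletionV{W}) - \dim \repletionV{W''} \;=\; R(W) - \dim W'' - 1.
\]
Since $\repletionV{W'} = W'$, this reads $R(W') + \bfe'' \le R(W) - \dim W'' - 1$, which together with the hypothesis $R(W') + \bfe'' = R(W) - \dim W''$ gives $R(W) - \dim W'' \le R(W) - \dim W'' - 1$, a contradiction. Therefore $v\in W''$ and $\repletionV{W''}=W''$.

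For the other three equalities in Lemma~\ref{lemma_bound_r'_e'_R_w'} the argument is symmetric: swapping the roles of ${}'$ and ${}''$ handles the equality $R(W'') + \bfe' = R(W) - \dim W'$ (in which case repletion with respect to any $v \in \Prime \sqcup \Bis$ leaves $W'$ unchanged), and replacing $E'$ or $E''$ by $F'$ or $F''$ handles the two equalities involving $\bff'$ and $\bff''$. The only point I would verify carefully is precisely the one I flagged above: that the spaces $E', E'', F', F''$ are intrinsically attached to $V$ (and not to $W'$ or $W''$), so they genuinely do not change under repletion; this is the crux that makes the contradiction work and is not expected to cause difficulty because $V$ itself is preserved by Proposition~\ref{p:does_not_hurt_to_replenish_v}\ref{it:repletion_has_the_same_decomposition_v}.
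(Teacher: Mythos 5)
Your proof is correct and takes essentially the same route as the paper: both apply Lemma~\ref{lemma_bound_r'_e'_R_w'} to the repleted pair and use Proposition~\ref{p:does_not_hurt_to_replenish_v} to see that $V$, $R(W)$, and hence $\bfe''$ are unchanged, the only difference being that you phrase it as a contradiction while the paper forces a chain of inequalities into equalities. Your explicit check that $E''$ is attached to $V$ rather than to $W''$ is exactly the point the paper uses implicitly.
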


\begin{proof}
    By Lemma~\ref{lemma_bound_r'_e'_R_w'} applied to $\repletionV{W} = \repletionV{W'}\oplus \repletionV{W''}$ 
      and by Proposition~\ref{p:does_not_hurt_to_replenish_v}
    \begin{align*}
        R(\repletionV{W}) - \bfe'' & \stackrel{\text{\ref{lemma_bound_r'_e'_R_w'}}}{\ge} R(\repletionV{W'}) + \dim (\repletionV{W''}) \\
        & \stackrel{\text{\ref{p:does_not_hurt_to_replenish_v}}}{\ge} R(W') + \dim W'' \\ 
        &\stackrel{\text{assumptions of \ref{lem_if_inequality_then_Wbis_is_replete}}}{=}\ \ R(W) - \bfe'' 
        \stackrel{\text{\ref{p:does_not_hurt_to_replenish_v}}}{=} R(\repletionV{W}) - \bfe''.
    \end{align*}
    Therefore all inequalities are in fact equalities. In particular,  $\dim (\repletionV{W''}) = \dim W''$.
    The claim of the lemma follows from $W'' \subset \repletionV{W''}$.
\end{proof}

As a corollary we can prove, that if $R(W'') \le \dim W'' +2 $, then either rank additivity holds or $W''$ is equal to its repletion.
\begin{cor}\label{cor_if_diff_betw_R_and_dim_Wbis_eq_2_then}
    Assume $R(W'') \le \dim W'' +2 $. Then either the additivity holds $R(W) = R(W') + R(W'')$ 
       or:
    \begin{itemize}
     \item $R(W'') = \dim W'' +2 $, and 
     \item $R(W) = R(W') + R(W'')-1$, and 
     \item $\bfe'' = \bff''=1$, and 
     \item $\repletion{W''} = W''$.
    \end{itemize}
\end{cor}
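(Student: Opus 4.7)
The plan is to assume that the additivity fails, namely $R(W) < R(W') + R(W'')$, and then deduce each of the four bulleted conditions in turn. Since additivity would be the other alternative, we may freely use the consequences of failure established earlier, in particular Corollary~\ref{cor_bounds_on_es_and_fs} and Proposition~\ref{prop_SAC_if_E'=0}.

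First I would pin down the gap $R(W'') - \dim W''$. By Proposition~\ref{prop_SAC_if_E'=0} (the ``in particular'' clause), if $R(W'') \le \dim W'' + 1$ then additivity holds; so under our hypothesis $R(W'') \le \dim W'' + 2$ the failure of additivity forces $R(W'') = \dim W'' + 2$. Next, Corollary~\ref{cor_bounds_on_es_and_fs} gives $\bfe'' < R(W'') - \dim W'' = 2$ and $\bff'' < R(W'') - \dim W'' = 2$, so $\bfe'', \bff'' \in \{0,1\}$. But Proposition~\ref{prop_SAC_if_E'=0} rules out the value $0$ (any of $E', E'', F', F''$ being zero forces additivity), and so $\bfe'' = \bff'' = 1$.

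Now I would bound $R(W)$ from below to pin down the defect. From Lemma~\ref{lemma_bound_r'_e'_R_w'},
\[
    R(W) \;\ge\; R(W') + \dim W'' + \bfe'' \;=\; R(W') + \dim W'' + 1 \;=\; R(W') + R(W'') - 1,
\]
using $R(W'') = \dim W'' + 2$. Since we also assume $R(W) < R(W') + R(W'')$, the only possibility is the equality $R(W) = R(W') + R(W'') - 1$.

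Finally, the displayed chain of inequalities is in fact a chain of equalities; in particular $R(W') + \bfe'' = R(W) - \dim W''$. Lemma~\ref{lem_if_inequality_then_Wbis_is_replete} then applies and yields $W'' = \repletionV{W''}$ for every $v \in \Prime \sqcup \Bis$. Iterating over all such $v$, this gives $\repletion{W''} = W''$, which is the last claim. No step should present any serious difficulty: the whole argument is an assembly of the dimension bounds of Lemma~\ref{lemma_bound_r'_e'_R_w'}, Corollary~\ref{cor_bounds_on_es_and_fs}, and Proposition~\ref{prop_SAC_if_E'=0}, with Lemma~\ref{lem_if_inequality_then_Wbis_is_replete} providing the final step. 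The only subtlety is to make sure that we invoke Proposition~\ref{prop_SAC_if_E'=0} in its stronger form ($R(W'') \le \dim W''+1$ forces additivity, not merely $R(W'') = \dim W''$), which is what cleanly eliminates the case $R(W'') = \dim W'' + 1$ from our analysis.
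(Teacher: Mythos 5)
Your proposal is correct and follows essentially the same route as the paper's own proof: Proposition~\ref{prop_SAC_if_E'=0} pins down the gap and excludes $\bfe''=0$ or $\bff''=0$, Corollary~\ref{cor_bounds_on_es_and_fs} bounds them above, Lemma~\ref{lemma_bound_r'_e'_R_w'} forces the defect to be exactly $1$, and Lemma~\ref{lem_if_inequality_then_Wbis_is_replete} gives the repletion claim. Your explicit verification that $R(W') + \bfe'' = R(W) - \dim W''$ before invoking the last lemma is a welcome detail the paper leaves implicit.
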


\begin{proof}
   Assume, that the additivity does not hold. 
   Then by Lemma~\ref{lem_rank_at_least_2_more_than_dimension} we must have $R(W'') = \dim W'' +2 $. 
      By Proposition~\ref{prop_SAC_if_E'=0} follows $\bfe''> 0$, $\bff''> 0$, while by Corollary~\ref{cor_bounds_on_es_and_fs} 
      we obtain $\bfe''< 2$ and $\bff'' < 2$.
   Thus $\bfe'' =\bff'' = 1$.
  
   By Lemma~\ref{lemma_bound_r'_e'_R_w'} the inequality $R(W) \ge R(W')+ 1 + \dim W''$ holds.
   The right hand side is equal to $R(W') + R(W'')-1$ by the above discussion 
      (the $\le$ inequality follows from the failure of additivity).

   The final claim $\repletion{W''} = W''$ follows from Lemma~\ref{lem_if_inequality_then_Wbis_is_replete}.
\end{proof}

Later, in Corollary~\ref{cor_1_2_hook_shaped} we will show that if the difference between rank and dimension of $W''$ is at most two, then  rank additivity holds. 
  
\subsection{Digestion with respect to a rank one tensor}\label{ss:digestion}
This subsection is a generalization of {\cite[Sect.~4.2.]{BPR}}.
For pairs which are replete with respect to $v \in \Prime$ (or $\Bis$) it makes sense to consider the complement of $\linspan{v}$ in $W'$ (resp. $\linspan{v}$ in $W''$).

\begin{defin} Using Notation~\ref{notation_Prime_etc}, let $v \in \Prime \sqcup \Bis$ 
    and $\digestionV W'$, $\digestionV W'' $ denote the following linear spaces:
    \begin{equation*}
     \left\{
    \begin{aligned}
\digestionV W' :=&\linspan{\ccB \setminus \{v\}}
         \cap W',  &
      \digestionV W'' :=&W'',
    & \text{if } &v \in \Prime, \\
\digestionV W' :=& W',  &
      \digestionV W'':=&\linspan{\ccB \setminus \{v\}}
          \cap W'', 
    & \text{if } &v \in \Bis.
\end{aligned}
    \right.
\end{equation*}
 
  We call the pair $(\digestionV W' , \digestionV W'')$ the \emph{digested version of $(W',W'')$ with respect to} $v$.  Similarly, by $(\digestion W' , \digestion W'' )$ we will denote the result of consecutive digestion with respect to
all elements of $\Prime$ and $\Bis$. This latter notion agrees with one introduced in  \cite[Subsect. 4.3]{BPR}.
\end{defin}

\begin{lemma}\label{lem_dim_S'_v}
 If $(W', W'')$ is replete with respect to $v \in \Prime$ (or $\Bis$),
   then $W'  = \linspan{v} \oplus \digestionV {W'}$ and $W''  = \digestionV {W''}$. (resp.  $W'  =  \digestionV {W'}$ and $W''  = \linspan{v} \oplus \digestionV {W''}$).
\end{lemma}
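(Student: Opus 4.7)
The plan is to treat the two cases ($v \in \Prime$ and $v \in \Bis$) symmetrically, so I will only discuss the $\Prime$ case and observe that the other is obtained by swapping the roles of ${}'$ and ${}''$. In that case the statement $W'' = \digestionV{W''}$ is literally the definition of $\digestionV{W''}$, so nothing needs to be done there. The real content is the decomposition $W' = \linspan{v} \oplus \digestionV{W'}$.

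First I would record the two ingredients that repleteness with respect to $v$ supplies. On one hand, $v \in W'$ by definition of repleteness, so $\linspan{v} \subseteq W'$. On the other hand, $W' \subseteq W \subseteq V = \linspan{\ccB}$, and $\ccB$ is a basis of $V$ with $v \in \ccB$. The second point means that every $w \in V$ has a unique expansion $w = \alpha\, v + u$ with $\alpha \in \kk$ and $u \in \linspan{\ccB \setminus \{v\}}$, and in particular $\linspan{v} \cap \linspan{\ccB\setminus\{v\}} = 0$.

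Next I would verify the sum is direct and exhausts $W'$. Directness is immediate: since $\digestionV{W'} \subseteq \linspan{\ccB \setminus \{v\}}$, we have $\linspan{v} \cap \digestionV{W'} \subseteq \linspan{v} \cap \linspan{\ccB\setminus\{v\}} = 0$. To see the sum equals $W'$, I take an arbitrary $w \in W'$, use the basis expansion $w = \alpha v + u$ with $u \in \linspan{\ccB \setminus \{v\}}$, and note that $u = w - \alpha v$ lies in $W'$ because both $w$ and $v$ do. Hence $u \in \linspan{\ccB \setminus \{v\}} \cap W' = \digestionV{W'}$, giving $w \in \linspan{v} + \digestionV{W'}$, as required.

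There is no real obstacle: once one has written down that $v \in W'$ (repleteness) and that $V$ has $\ccB$ as a basis, the proof is a one-line chase through the definitions. The only point worth highlighting is the role of repleteness: without the hypothesis $v \in W'$, there would be no reason for the scalar $\alpha$ in the expansion $w = \alpha v + u$ of an element $w \in W'$ to produce a decomposition inside $W'$, and the statement would fail.
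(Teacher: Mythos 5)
Your proof is correct and follows essentially the same approach as the paper: both arguments rest on repleteness giving $v \in W'$ and on $\ccB$ being a basis of $V$ containing $v$, which forces $\linspan{v} \cap \digestionV{W'} = 0$. The only cosmetic difference is in the last step, where the paper concludes via a codimension count ($\codim(\digestionV{W'} \subset W') \le 1$) while you decompose an arbitrary element of $W'$ explicitly; both are valid one-line finishes.
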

\begin{proof}
We will prove only case when $v \in \Prime$. The case when $v \in \Bis$ is similar.
Both $\linspan{v}$ and $\digestionV {W'}$ are contained in $W'$. 
The intersection $\linspan{v} \cap \digestionV {W'}$ is zero, since 
   the seven sets $\Prime, \Bis, \HR, \HL, \VL, \VR, \Mix$ are disjoint and together they are linearly independent.
Furthermore,  
\begin{align*}
   \codim (\digestionV {W'} \subset W') \le &  \\
    \codim (\linspan{(\Prime \setminus \{v\}) \sqcup \Bis \sqcup \HL \sqcup \HR \sqcup \VL \sqcup \VR \sqcup \Mix } \subset V) = & 1.
\end{align*}
Thus $ \dim W' \leq \dim \digestionV {W'} + 1 $, which concludes the proof. 
\end{proof}

These complements $(\digestionV {W'}, \digestionV {W''})$ might replace the original pair $(W',W'')$ replete  with respect to $v \in \Prime$ (or $\Bis$):
  as we will show in Lemma~\ref{lem_can_digest_v}, if the additivity of the rank fails for $(W',W'')$, it also fails for $(\digestionV {W'}, \digestionV {W''})$.
Moreover, $(\digestionV {W'}, \digestionV {W''})$ does not involve  $v \in \Prime$ (or $\Bis$).
The opposite implication is not true as Lemma~\ref{lem_no_additivity_equivalence_in_digestion} states. 

\begin{lemma}\label{lem_can_digest_v}
Suppose $(W', W'')$ is replete with respect to $v \in \Prime$ (or $\Bis$), define $S':=\digestionV {W'}$ and $S'':=\digestionV {W''}$ and set $S=S'\oplus S''$.
Then
\begin{enumerate}
 \item \label{it:can_digest___rank_v}
       $R(S)=R(W) -1$
         and the space $\linspan{(\Prime \setminus {v}), \Bis, \HL,\HR,\VL,\VR,\Mix}$ determines a minimal decomposition of $S$.
\item \label{it:can_digest___additivity_v}
       If the additivity of the rank $R(S) =  R(S')+ R(S'')$ holds for $S$,
         then it also holds for $W$, that is $R(W) =  R(W')+ R(W'')$.
\end{enumerate}
\end{lemma}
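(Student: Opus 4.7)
\textbf{Plan for the proof of Lemma~\ref{lem_can_digest_v}.} I treat the case $v\in\Prime$; the case $v\in\Bis$ is symmetric. The statement \ref{it:can_digest___rank_v} asserts two things: that $R(S)=R(W)-1$, and that $V':=\linspan{\ccB\setminus\{v\}}$ is a minimal decomposition of $S$. Since $V'$ already has $\dim V'=\dim V-1=R(W)-1$ and is spanned by rank-one tensors (its distinguished basis $\ccB\setminus\{v\}$), the only missing pieces are $S\subset V'$ and the inequality $R(S)\ge R(W)-1$. Part \ref{it:can_digest___additivity_v} will then follow from an elementary rank comparison using $W=\linspan v+S$.

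For the easy inequality and the dimension bookkeeping, I would first apply Lemma~\ref{lem_dim_S'_v}: it gives $W'=\linspan v\oplus S'$ and $W''=S''$, hence $\dim S=\dim W-1$ and $W=S+\linspan v$. Since $v$ is a rank-one tensor, any decomposition of $S$ enlarged by $v$ is a decomposition of $W$, so $R(W)\le R(S)+1$, equivalently $R(S)\ge R(W)-1$.

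The technical heart of \ref{it:can_digest___rank_v} is the inclusion $S\subset V'$. Since $S'=W'\cap V'\subset V'$ holds by definition, it suffices to show $W''\subset V'$. I would argue by contradiction: if some $w''\in W''$ has nonzero $v$-coefficient $\alpha$ when expanded in the basis $\ccB$, then $w''=\alpha v+v'$ with $v'\in V'$. Using that $v\in W'\subset B'\otimes C'$ while $w''\in W''\subset B''\otimes C''$, one may compare the block components of $v'$ and manipulate the expansion of $v'$ in $\ccB\setminus\{v\}$ to produce an alternative rank-one basis of $V$ for which the number of $\Prime$ and $\Bis$ elements is at least as large as before, but the count of \emph{hook} basis vectors ($\HL,\HR,\VL,\VR$) strictly increases at the expense of $\Mix$; this contradicts the maximality clauses in Notation~\ref{notation_Prime_etc}. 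This swap-and-reorganise argument, carried out inside the combinatorial bookkeeping of the seven types, is the main obstacle of the proof. Once $W''\subset V'$ is established, $V'$ has dimension $R(W)-1$, is spanned by the rank-one set $\ccB\setminus\{v\}$, and contains $S$; thus $R(S)\le R(W)-1$, and combined with the previous paragraph one obtains $R(S)=R(W)-1$ and the desired minimal decomposition of $S$.

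Finally, to deduce \ref{it:can_digest___additivity_v}, assume additivity holds for $S$, i.e.\ $R(S)=R(S')+R(S'')$. Then
\[
  R(W)\stackrel{\ref{it:can_digest___rank_v}}{=}R(S)+1=R(S')+R(S'')+1.
\]
Since $W''=S''$ we have $R(W'')=R(S'')$; and since $W'=S'\oplus\linspan v$ with $v$ of rank one, $R(W')\le R(S')+1$. Therefore
\[
  R(W')+R(W'')\le R(S')+1+R(S'')=R(W),
\]
and together with the trivial bound $R(W)\le R(W')+R(W'')$ this gives $R(W)=R(W')+R(W'')$, completing the transfer of additivity from $S$ to $W$.
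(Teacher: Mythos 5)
Your overall route is the same as the paper's: part \ref{it:can_digest___additivity_v} and the inequality $R(S)\ge R(W)-1$ are obtained exactly as in the paper (via Lemma~\ref{lem_dim_S'_v}, the fact that $R(v)=1$, and subadditivity of rank for direct sums), and for the remainder of part \ref{it:can_digest___rank_v} both you and the paper reduce everything to the single inclusion $S\subset\linspan{\ccB\setminus\{v\}}$. Since $S'=\linspan{\ccB\setminus\{v\}}\cap W'$ lies there by definition, the entire content is, as you correctly observe, the inclusion $W''\subset\linspan{\ccB\setminus\{v\}}$, i.e.\ that no element of $W''$ has a nonzero coefficient on $v$ when expanded in $\ccB$.

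The gap is that you declare this inclusion to be ``the main obstacle of the proof'' and then do not prove it: what you offer is a plan (compare block components, reorganise the expansion, contradict the maximality clauses of Notation~\ref{notation_Prime_etc}), with no indication of which basis vectors are exchanged for which, why the exchange preserves rank-one-ness and the counts $\prim$, $\bis$, or why it strictly increases $\hl+\hr+\vl+\vr$ at the expense of $\mix$. As written this is not a proof of the step you yourself identify as the crux. For comparison, the paper disposes of this point in one line, simply asserting $S\subset\linspan{(\Prime\setminus\{v\}),\Bis,\HL,\HR,\VL,\VR,\Mix}$ and deducing $R(S)\le R(W)-1$. Note also that the mechanism you gesture at (trading $\Mix$ for hook-type vectors, i.e.\ the \emph{secondary} maximality clause) is not obviously the relevant one: if some $w''\in W''$ satisfied $w''-\alpha v\in\linspan{\ccB\setminus\{v\}}$ with $\alpha\ne 0$, then projecting onto the block $B'\otimes C'$ shows the obstruction lives in the interplay between $\linspan{\Prime}$ and the upper-left components of $\HL$, $\VL$, $\Mix$, which concerns the \emph{primary} maximality of $\prim+\bis$ rather than the $\Mix$-versus-hooks count. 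Either supply a short direct justification of $W''\subset\linspan{\ccB\setminus\{v\}}$ or carry out the exchange argument in full; until then the proof of \ref{it:can_digest___rank_v} is incomplete exactly where you predicted it would be hardest.
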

\begin{proof}
We will prove only case when $v \in \Prime$. The case when $v \in \Bis$ is similar.
     By Lemma~\ref{lem_dim_S'_v} we have $W = S\oplus \linspan{v}$, thus $R(W)\le R(S)+ 1$.
     On the other hand, $S\subset \linspan{(\Prime \setminus {v}), \Bis, \HL,\HR,\VL,\VR,\Mix}$, hence $R(S)\le R(W)-1$.
     These two claims show the equality for $R(S)$ in \ref{it:can_digest___rank_v} 
        and that $\linspan{(\Prime \setminus {v}), \Bis, \HL,\HR,\VL,\VR,\Mix}$ gives a minimal decomposition of $S$.

     Finally, if $R(S) =  R(S')+ R(S'')$, then:
     \begin{align*}
        R(W) &= R(S)+ 1 = R(S')+ R(S'')+1 \ge R(W') + R(W''),
     \end{align*}
     showing the statement  \ref{it:can_digest___additivity_v} for $W$.
\end{proof}

\begin{lemma}\label{lem_no_additivity_equivalence_in_digestion}
Assume that there exist a counterexample to additivity of tensor rank over a base field $\kk$, for example $\kk=\CC$ (see Theorem~\ref{t:shitov_counter_example}). Then, there exists an example of a pair $(W', W'')$ of linear spaces over $\kk$ such that:
\begin{enumerate}
 \item $(W', W'')$ is replete  with respect to $v \in \Prime$, 
 \item the additivity of the rank holds for $W=W' \oplus W''$,
 \item the additivity of the rank does not hold for $S=S' \oplus S''$, where $S':=\digestionV {W'}$, $S'':=\digestionV {W''}$.
\end{enumerate}
\end{lemma}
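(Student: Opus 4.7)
The plan is to construct $(W', W'')$ from a given counterexample $(\tilde{T}', \tilde{T}'')$ by enlarging $\tilde{T}'$ with one rank-one tensor $v \in B' \otimes C'$ chosen to exactly balance the additivity defect.

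Set $r_1 := R(\tilde{T}')$, $r_2 := R(\tilde{T}'')$, $r := R(\tilde{T}' \oplus \tilde{T}'')$, and $d := r_1 + r_2 - r \ge 1$. By Proposition~\ref{prop_SAC_if_E'=0} applied to $\tilde{T}'$, the gap $r_1 - \dim \tilde{T}'$ is at least $2$, so a minimal decomposition $\tilde{V}' \subset B' \otimes C'$ of $\tilde{T}'$ strictly contains $\tilde{T}'$ and, because $\tilde{V}' = \linspan{\tilde{V}'_{\Seg}}$ by Proposition~\ref{prop_can_choose_decomposition_containing_simple_tensors_from_W}, contains rank-one tensors outside $\tilde{T}'$. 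I would pick a rank-one $v \in \tilde{V}'_{\Seg}$ with $v \notin \tilde{T}'$ and additionally such that $v$ is \emph{new} for $\tilde{T} := \tilde{T}' \oplus \tilde{T}''$, in the sense that $R(\tilde{T} \oplus \langle v \rangle) = R(\tilde{T}) + 1$; equivalently, $v$ appears in no minimal decomposition of $\tilde{T}$. With this choice, set
\[
   W' := \tilde{T}' \oplus \langle v \rangle \subset B' \otimes C', \qquad W'' := \tilde{T}''.
\]

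Next I would verify the three required properties. For (1), $v$ is a rank-one element of $B' \otimes C'$ lying in $W'$, so it can be taken as an element of $\Prime$ in an adapted basis $\ccB$ of a minimal decomposition $V$ of $W$, and $(W', W'')$ is replete with respect to $v$ by construction. For (2), the inclusion $W' \subset \tilde{V}'$ together with $R(\tilde{V}') = r_1$ forces $R(W') = r_1$, while newness of $v$ gives $R(W) = R(\tilde{T} \oplus \langle v \rangle) = r + 1$. Thus the additivity identity $R(W) = R(W') + R(W'')$ becomes $r + 1 = r_1 + r_2$, i.e.\ $d = 1$. For (3), taking $V = V_{\tilde{T}} \oplus \langle v \rangle$ (the direct sum of a minimal decomposition $V_{\tilde{T}}$ of $\tilde{T}$ with $\langle v \rangle$) as the minimal decomposition of $W$, computing $\digestionV{W'} = W' \cap \linspan{\ccB \setminus \{v\}}$ gives $S' = \tilde{T}'$; similarly $S'' = \tilde{T}''$, so $(S', S'')$ is the original counterexample and additivity fails for $S$.

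The main obstacle is two-fold. First, the construction above is calibrated to the case $d = 1$; for a counterexample of larger defect $d \ge 2$ one has to adjoin $d$ new rank-one tensors $v_1, \ldots, v_d \in \tilde{V}'$ simultaneously, each increasing the rank of the running direct sum by one, so that $W$ is additive of rank $r + d = r_1 + r_2$ and digesting with respect to $v_1$ returns the pair $(\tilde{T}' \oplus \langle v_2, \ldots, v_d \rangle, \tilde{T}'')$, which is itself a counterexample of defect one by a dimension count. This requires the gap $r_1 - \dim \tilde{T}'$ to be at least $d$, which one can always arrange by enlarging $B', C'$ and padding $\tilde{T}'$ with rank-one tensors in fresh directions without changing the defect. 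Second, one must show that a rank-one element of $\tilde{V}'_{\Seg}$ new for $\tilde{T}$ exists: over $\kk = \CC$ this follows by a genericity argument, since $\tilde{V}'_{\Seg}$ is a positive-dimensional projective variety while the Zariski closure of the set of rank-one tensors appearing in minimal decompositions of $\tilde{T}$ is a proper subvariety of $\PP(B \otimes C)$. Making this last step rigorous is the technical heart of the argument.
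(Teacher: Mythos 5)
Your construction is genuinely different from the paper's, and it hinges on a step you correctly identify as the "technical heart" but do not actually supply: the existence of a rank-one $v\in \tilde{V}'_{\Seg}$ that is simultaneously inside a minimal decomposition $\tilde{V}'$ of $\tilde{T}'$ (so that $R(W')=r_1$) and "new" for $\tilde{T}$ (so that $R(W)=r+1$). The genericity argument you sketch does not close this gap. Knowing that the union of all minimal decompositions of $\tilde{T}$ has Zariski closure properly contained in $\PP(B\otimes C)$ says nothing about whether it contains $\tilde{V}'_{\Seg}$: both sets live naturally inside $\PP(B'\otimes C')$ (every $\Prime$-type element of a decomposition of $\tilde{T}$ does), so properness in the ambient space is irrelevant. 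Worse, $\tilde{V}'_{\Seg}$ need not be positive-dimensional — if $\tilde{T}'$ has an essentially unique decomposition it is a finite set — so no dimension count forces a new point to exist. Finally, the lemma is stated over an arbitrary field $\kk$ admitting a counterexample, where Zariski-genericity arguments are unavailable. Your treatment of defect $d\ge 2$ has a further error: adjoining rank-one tensors "in fresh directions" raises $\dim\tilde{T}'$ and $R(\tilde{T}')$ by the same amount, so it does not increase the gap $r_1-\dim\tilde{T}'$ (that the gap is at least $d+1$ anyway follows from Lemma~\ref{lemma_bound_r'_e'_R_w'}, but the simultaneous existence of $d$ new rank-one tensors inside $\tilde{V}'$ is then an even stronger unproved claim).

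The paper avoids the newness requirement entirely by arguing by contradiction and iterating. Suppose no pair as in the statement exists, i.e.\ additivity for the digested pair is always equivalent to additivity for the replete pair. Starting from a counterexample $(W',W'')$ and a rank-one basis $\mathcal{B}'$ of a minimal decomposition of $W'$, one adds the elements of $\mathcal{B}'$ to $W'$ one at a time; each addition preserves $R(W')$, and at each step additivity must still fail for the enlarged pair, since otherwise that enlarged pair (which is replete with respect to the added vector, placed in $\Prime$) together with its digestion would be exactly the configuration assumed not to exist. After all of $\mathcal{B}'$ has been added, $\tilde{W}'_n=\langle\mathcal{B}'\rangle$ consists entirely of $\Prime$-type vectors; digesting all of them leaves $\{0\}\oplus W''$, for which additivity trivially holds, and Lemma~\ref{lem_can_digest_v}\ref{it:can_digest___additivity_v} then propagates additivity back to $\tilde{W}'_n\oplus W''$ — a contradiction. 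If you want to keep your constructive one-step picture, you would need to prove the existence of the new $v$; otherwise the iterative argument is the way to make the idea rigorous.
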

\begin{proof}
 Assume conversely, that additivity of the rank $R(S) =  R(S')+ R(S'')$ holds for $S$ if and only if it holds for $W$, that is $R(W) =  R(W')+ R(W'')$.
 
 Then take a pair $(W', W'')$ such that $R(W' \oplus W'') <  R(W')+ R(W'')$ and a minimal basis $\mathcal{B}'$ of rank one matrices such that $W'\subseteq \linspan{\ccB'} $. We construct $\tilde{W'}_1$ by adding an element $v \in \mathcal{B}'$ to $W'$. Let us observe, that $R(\tilde{W'}_1)=R(W')$. 
 Indeed,  
$\tilde{W'}_1\subseteq \linspan{\ccB}$ implies $R(\tilde{W'}_1) \leq R(W')$
 The opposite inequality follows from the fact, that $W' \subseteq \tilde{W'}_1$.
 If $R(\tilde{W'}_1 \oplus W'') =R(\tilde{W'}_1) + R(W'') $ we have a contradiction, because we can always choose a basis for $\tilde{W'}_1 \oplus W''$ and partition $\Prime, \Bis,..., \Mix$ in  a way that $v \in \Prime$.  Thus, we may assume the right hand side is smaller. 
 
 We repeat the process with $v_1 \neq v_2 \in \mathcal{B}'$ and $\tilde{W'}_1$ in place of $W'$, obtaining subspace $\tilde{W'}_2$. We do it inductively. We denote by $n$ the smallest number $i$ such that $\mathcal{B}' \subseteq \tilde{W'}_i$. 
 As discussed before, we must have $R(\tilde{W'}_n \oplus W'') <R(\tilde{W'}_n) + R(W'') $
 and we may assume that in the minimal decomposition of  $\tilde{W'}_i \oplus W''$ all matrices from $\mathcal{B}'$ belong to $\Prime$.
 After the process of digestion of all Primes of $\tilde{W'}_n \oplus W''$ we obtain $\emptyset \oplus W''$ for which rank additivity trivially holds. Thus from Lemma~\ref{lem_can_digest_v} \ref{it:can_digest___additivity_v} we know, that $R(\tilde{W'}_n \oplus W'') =R(\tilde{W'}_n) + R(W'') $, a contradiction.
\end{proof}

As a summary, in our search for examples of failure of the additivity of the rank,
   in the previous section we replaced a linear space $W=W'\oplus W''$ by its
repletion with respect to $v \in \Prime$ (or $\Bis$) $\repletionV{W} = \repletionV{W'} \oplus \repletionV{W''}$, that is possibly larger.
Here in turn, we replace  $\repletionV{W}$ by a smaller linear space $S := S' \oplus S''$, where $S' :=\digestionV {(\repletionV{W'})}, S'' := \digestionV {(\repletionV{W''})}$.
In fact, $\dim S' \leq \dim W'$ and $\dim S'' \leq \dim W''$, and also $R(S)\le R(W)$.
That is, changing $W$ into $S$ neither makes the corresponding tensors larger nor decreases the defect.
   
\begin{cor}\label{cor_can_replete_and_digest}
 Let $(W', W'')$ be a pair of linear spaces, $v \in \Prime$ and $(S',S''):=(\digestionV {(\repletionV{W'})}, \digestionV {(\repletionV{W''})})$. Then the following inequalities holds:
 \begin{enumerate}
  \item $0 \leq \dim \repletionV{W'} - \dim W' \leq 1$, \label{it_cor_can_RD_1}
  \item $\dim S' = \dim \repletionV{W'} -1$, \label{it_cor_can_RD_2}
\item $\dim S'' = \dim W''$, \label{it_cor_can_RD_3}
  \item $R(S)= R(W) -1$ and the space $\linspan{(\Prime \setminus {v}), \Bis, \HL,\HR,\VL,\VR,\Mix}$ determines a minimal decomposition of $S$, \label{it_cor_can_RD_4}
  \item $R(W')-1 \leq R(S')\leq R(W') + (\dim \repletionV{W'} - \dim W')$, \label{it_cor_can_RD_5}
  \item $S''= W''$, in particular $R(S'')= R(W'')$. \label{it_cor_can_RD_6}
 \end{enumerate}
 Moreover, the defect does not decrease after the process of repletion and digestion by $v$.
 In particular if the rank additivity does not hold for $(W', W'')$, then for $(S',S'')$ does not hold as well.
\end{cor}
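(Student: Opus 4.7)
The plan is to treat this corollary as a bookkeeping consolidation of the two operations \emph{repletion} (Proposition~\ref{p:does_not_hurt_to_replenish_v}) and \emph{digestion} (Lemmas~\ref{lem_dim_S'_v} and~\ref{lem_can_digest_v}), verifying each of the six items in order and then reading off the defect statement as a one-line consequence. The observation that will be used throughout is that by Proposition~\ref{p:does_not_hurt_to_replenish_v}\ref{it:repletion_is_replete_v} the repleted pair $(\repletionV{W'},\repletionV{W''})$ is replete with respect to $v$, so both digestion lemmas are immediately available; moreover by part~\ref{it:repletion_has_the_same_decomposition_v} of the same proposition $V$ remains a minimal decomposition and the distinguished basis $\Prime \sqcup \Bis \sqcup \dotsb \sqcup \Mix$ is unchanged, so references to $\Prime \setminus \{v\}$ etc.\ on the digested side are unambiguous.

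First I would clear the easy items. Item~\ref{it_cor_can_RD_1} is immediate from $\repletionV{W'} = W' + \langle v \rangle$: the difference is $0$ if $v \in W'$ and $1$ otherwise. Items~\ref{it_cor_can_RD_3} and~\ref{it_cor_can_RD_6} follow from the fact that for $v \in \Prime$ both operations leave the $W''$-slot untouched, so $S'' = \repletionV{W''} = W''$. For item~\ref{it_cor_can_RD_2}, Lemma~\ref{lem_dim_S'_v} applied to the replete pair yields the splitting $\repletionV{W'} = \langle v \rangle \oplus S'$, from which the dimension equality drops out. Item~\ref{it_cor_can_RD_4} is Lemma~\ref{lem_can_digest_v}\ref{it:can_digest___rank_v} applied to $(\repletionV{W'},\repletionV{W''})$ combined with $R(\repletionV{W}) = R(W)$ from Proposition~\ref{p:does_not_hurt_to_replenish_v}.

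Item~\ref{it_cor_can_RD_5} is the only place that needs a two-sided argument. For the upper bound, $S' \subseteq \repletionV{W'}$ forces $R(S') \le R(\repletionV{W'})$, and Proposition~\ref{p:does_not_hurt_to_replenish_v} bounds the latter by $R(W') + (\dim\repletionV{W'} - \dim W')$. For the lower bound, the splitting from item~\ref{it_cor_can_RD_2} together with $R(v) = 1$ gives $R(\repletionV{W'}) \le R(S') + 1$, and combined with $R(\repletionV{W'}) \ge R(W')$ this yields $R(S') \ge R(W') - 1$. Finally, writing $d := R(W') + R(W'') - R(W)$ and $d' := R(S') + R(S'') - R(S)$, items~\ref{it_cor_can_RD_4}, \ref{it_cor_can_RD_5}, \ref{it_cor_can_RD_6} combine into
\[
   d' \;\ge\; (R(W') - 1) + R(W'') - (R(W) - 1) \;=\; d,
\]
so the defect cannot decrease, and failure of additivity for $(W',W'')$ transfers to $(S',S'')$.

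There is essentially no geometric obstacle here: the corollary is a packaging of the preceding two lemmas, and each item unwinds in one or two lines. The only care needed is to remember that the distinguished basis survives repletion, which is exactly what makes the minimal decomposition of $S$ in item~\ref{it_cor_can_RD_4} well-defined; once that is noted, item~\ref{it_cor_can_RD_5} is the only spot where both inequalities must be spelled out explicitly.
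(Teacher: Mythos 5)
Your proposal is correct and follows exactly the paper's route: the paper's proof consists of the single line that the corollary ``follows directly from Proposition~\ref{p:does_not_hurt_to_replenish_v}, Lemma~\ref{lem_dim_S'_v} and Lemma~\ref{lem_can_digest_v},'' and your write-up simply unwinds that citation item by item, including the only mildly nontrivial point (the two-sided bound in item~\ref{it_cor_can_RD_5} and the resulting defect inequality $d'\ge d$). No gaps.
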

\begin{proof}
 The proof follows directly from Proposition~\ref{p:does_not_hurt_to_replenish_v},  Lemma~\ref{lem_dim_S'_v} and Lemma~\ref{lem_can_digest_v}.
\end{proof}

The following observation states, that after repletion and digestion with respect to all elements of $\Prime$ there is no $a'$ in $A'^*$ such that the slice $p(a')\in B' \otimes C'$ is of rank one. 
   
\begin{lem}\label{lem_no_rank_1_slice_after_digestion}
 Suppose $p(A^*)=W' \oplus W'' \subseteq (B'\oplus B'') \otimes (C' \oplus C'')$, where $W'$ is equal to its digested and repleted version with respect to all elements of $\Prime$. Then, there is no $a' \in A'^*$ such that $v:=p(a')\in B' \otimes C'$ is a rank 1 matrix.
\end{lem}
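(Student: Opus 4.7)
The plan is to argue by contradiction: assume there exists $a' \in (A')^*$ with $v := p(a')$ a nonzero rank-one element of $B' \otimes C'$, so that $v \in W' = p'((A')^*)$. I will fix a minimal decomposition $V$ of $W = W' \oplus W''$ and the distinguished basis $\ccB = \Prime \sqcup \Bis \sqcup \HL \sqcup \HR \sqcup \VL \sqcup \VR \sqcup \Mix$ of $V$ from Notation~\ref{notation_Prime_etc}, and then swap $v$ into $\ccB$ as a new Prime-type element in order to break the maximality built into that notation.

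The first step is to turn the hypothesis $W' = \digestion \repletion W'$ into the clean subspace condition $W' \subseteq \linspan{\ccB \setminus \Prime}$. Since every element of $\Prime$ lies in $B'\otimes C'$, repletion yields $\repletion W' = W' + \linspan{\Prime}$, and by Proposition~\ref{p:does_not_hurt_to_replenish_v}\ref{it:repletion_has_the_same_decomposition_v} the basis $\ccB$ remains valid for $V$ as a minimal decomposition throughout. Iterating the one-element digestion over $\Prime$ telescopes to
\[
   \digestion \repletion W' \;=\; \linspan{\ccB \setminus \Prime} \,\cap\, (W' + \linspan{\Prime}),
\]
which, under the direct-sum decomposition $V = \linspan{\Prime} \oplus \linspan{\ccB \setminus \Prime}$, is precisely the image of $W'$ under the projection along $\linspan{\Prime}$. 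The equality $W' = \digestion \repletion W'$ therefore forces $W' \subseteq \linspan{\ccB \setminus \Prime}$.

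Second, I will expand $v$ in $\ccB$ and isolate a basis element of the right type to swap out. Since $v \in W' \subseteq \linspan{\ccB \setminus \Prime}$ one has $v = \sum_{u \in \ccB \setminus \Prime} \beta_u\, u$ with at least one $\beta_u \neq 0$. If every such $u$ were of type $\Bis$, then $v \in \linspan{\Bis} \subseteq B''\otimes C''$; combined with $0 \neq v \in B'\otimes C'$ and $(B'\otimes C') \cap (B''\otimes C'') = 0$ this is a contradiction. Hence there exists $u_0 \in \HL \sqcup \HR \sqcup \VL \sqcup \VR \sqcup \Mix$ with $\beta_{u_0} \neq 0$.

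Finally, $\ccB' := (\ccB \setminus \{u_0\}) \cup \{v\}$ is again a basis of $V$ (since $\beta_{u_0} \neq 0$) consisting of rank-one matrices, but now $v$ is of type $\Prime$ while the removed $u_0$ was of neither $\Prime$ nor $\Bis$ type. Hence $|\Prime'| + |\Bis'| = |\Prime| + |\Bis| + 1$, contradicting the maximality of $|\Prime \sqcup \Bis|$ in the choice of $\ccB$ from Notation~\ref{notation_Prime_etc}. The main obstacle I expect is the first step — verifying carefully that the consecutive digestions telescope to the stated single intersection and that this intersection coincides with the projection of $W'$ along $\linspan{\Prime}$; once that translation is in place, the exclusion of the $\Bis$ case and the swap argument are essentially bookkeeping.
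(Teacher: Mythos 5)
Your proof is correct and takes essentially the same route as the paper's: both reduce to the observation that the rank-one slice $v\in B'\otimes C'$ would have to be spanned by basis elements outside $\Prime\sqcup\Bis$, contradicting the maximality of $\Prime$ and $\Bis$ in the choice of $\ccB$ from Notation~\ref{notation_Prime_etc}. Your write-up merely makes explicit the two steps the paper leaves terse, namely the telescoping of repletion/digestion to $W'\subseteq\linspan{\ccB\setminus\Prime}$ and the final basis-exchange argument.
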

   \begin{proof}
    Let us assume the opposite, there exists $a' \in A'^*$ such that $v\in B' \otimes C'$ is a rank 1 matrix. It follows from the assumption about $W'$ that $\Prime=\emptyset$, 
    thus $v \in \langle \HL, \VL, \HR, \VR, \Bis, \Mix \rangle \setminus \langle \Bis \rangle$. It is a contradiction with the way we partition the basis
    $\ccB$ 
to $\Prime, \Bis, ..., \Mix$ see Notation~\ref{notation_Prime_etc}.
   \end{proof}

We may replenish and digest also in the other directions. It turns out, that we can precisely say what happens with the hook-structure (Definition~\ref{def_hook_shaped_space}) when we choose the repletion vector wisely. 

\begin{cor}\label{c:removing_rank_1_slice}
 Assume that $p(A'^*)=W' \subset B' \otimes C'$ is $(\mathbf{k},\mathbf{l})$-hook shaped, i.e. there exists $G' \subset B', H' \subset C'$ such that $W' \subset G' \otimes C' + B' \otimes H'$, where $dim(G')=\mathbf{k}, \dim(H')=\mathbf{l}$. Assume further, that there is $\gamma \in (C'/H')^*$ such that $v:=p(\gamma)$ is a rank 1 matrix. Then after the process of repletion and digestion of $p(C^*)$ with respect to $v$ we obtain tensor $\tilde{p}$ such that:
 \begin{enumerate}
  \item $\tilde{p}=\tilde{p}' \oplus p''$, \label{c:repl_hook_i}
  \item  $ \tilde{p}' \in \tilde{A}'\otimes B' \otimes \tilde{C}'$,
  where 
  $\tilde{A}' \subseteq A'$ is such that $\tilde{p}'$ is $\tilde{A}'$-concise,
  $\tilde{C}':=\gamma^{{\perp}}\subset C'$ is the linear hyperplane $(\gamma =0)$, 
  and $R(\tilde{p}) = R(W)-1$, \label{c:repl_hook_ii}
 \item  $\tilde{p}(A'^*)$ is still $(\mathbf{k},\mathbf{l})$-hook shaped, \label{c:repl_hook_iii}
 \item  If the additivity of the rank does not hold for $p$,
         then it also does not hold for $\tilde{p}$, \label{c:repl_hook_iv}
 \item  If $p'$ is $A'$-concise, then $\dim A' - 1\leq  \dim \tilde{A}'$,\label{c:repl_hook_v}
 \item  If $p'$ is $A'$-concise and the set $\Prime$ in the decomposition of $p(A^*)$ is empty, then $ A' =  \tilde{A}' $.\label{c:repl_hook_vi}
 \end{enumerate}
\end{cor}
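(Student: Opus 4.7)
The plan is to realize $\tilde{p}$ concretely as $\tilde{p} = p - v \otimes c$ for a carefully chosen $c \in C'$, and then verify each of the six clauses in turn. First I would exploit the hook-shape assumption to pin down where $v$ lives: since $p' \in A' \otimes (G' \otimes C' + B' \otimes H')$ and $\gamma$ annihilates $H'$, contracting gives $v = p'(\gamma) \in A' \otimes G'$, so $v$ factors as $a_v \otimes g_v$ with $a_v \in A'$ and $g_v \in G'$. Pick $c \in C'$ with $\gamma(c) = 1$ (possible because $\gamma \in (C'/H')^*$ is nonzero on $C'$), set $\tilde{p}' := p' - v \otimes c$ and $\tilde{p} := \tilde{p}' + p''$. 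A direct contraction gives $\tilde{p}'(\gamma) = p'(\gamma) - \gamma(c) v = 0$, so $\tilde{p}' \in A' \otimes B' \otimes \tilde{C}'$, which establishes clause \ref{c:repl_hook_i} and the containment part of clause \ref{c:repl_hook_ii}.

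The hard part will be the rank equality $R(\tilde{p}) = R(W) - 1$ in clause \ref{c:repl_hook_ii}. The lower bound $R(\tilde{p}) \geq R(p) - 1$ is immediate from Proposition~\ref{proposition_for_AFT_method_coordinate_free}\ref{item_AFT_any_choice_drops_rank_at_most_1} for any choice of $c$, since $R(v) = 1$. For the upper bound I would pass to slicing in the $C$-direction: with $U := p(C^*) = U' \oplus U''$, where $U' = p'(C'^*) \subset A' \otimes B'$, the rank-$1$ element $v$ lies in $U'$, and Proposition~\ref{proposition_for_AFT_method_slice_A_improved} (which in turn follows from Corollary~\ref{cor_can_replete_and_digest} and Lemma~\ref{lem_dim_S'_v}) produces a complement $\tilde{U}' \subset U'$ to $\langle v \rangle$ with $R(\tilde{U}' \oplus U'') = R(U) - 1 = R(W) - 1$. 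The main technical step is to identify $\tilde{U}'$ with the image $p'(K)$ for a hyperplane $K \subset C'^*$ with $\gamma \notin K$; assuming $p'$ is $C'$-concise (the non-concise case is similar modulo the kernel of $p'|_{C'^*}$), $K$ is cut out uniquely by some $c \in C'$ with $\gamma(c) = 1$, and choosing precisely this $c$ in the construction above yields $\tilde{p}(\tilde{C}^*) = \tilde{U}' \oplus U''$, so $R(\tilde{p}) = R(W) - 1$ by Lemma~\ref{lem_rank_of_space_equal_to_rank_of_tensor}. This translation between the combinatorially-defined digestion complement and the analytic data of a single $c \in C'$ is the primary obstacle.

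The remaining four clauses are short. For \ref{c:repl_hook_iii}, note that $v \otimes c \in A' \otimes G' \otimes C'$ sits inside the hook space $A' \otimes (G' \otimes C' + B' \otimes H')$, so $\tilde{p}'$ does too; intersecting with $A' \otimes B' \otimes \tilde{C}'$ and using $H' \subset \tilde{C}'$ gives $\tilde{p}'(A'^*) \subset G' \otimes \tilde{C}' + B' \otimes H'$. For \ref{c:repl_hook_iv}, if $R(p) = R(p') + R(p'') - d$ with $d \geq 1$, then combining $R(\tilde{p}) = R(p) - 1$ with $R(\tilde{p}') \geq R(p') - 1$ (Proposition~\ref{proposition_for_AFT_method_coordinate_free}\ref{item_AFT_any_choice_drops_rank_at_most_1} applied to $p'$ alone) and $R(\tilde{p}'') = R(p'')$ yields $R(\tilde{p}) < R(\tilde{p}') + R(\tilde{p}'')$. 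Finally, for \ref{c:repl_hook_v} and \ref{c:repl_hook_vi}, study the kernel of the map $\alpha \mapsto \tilde{p}'(\alpha) = p'(\alpha) - \alpha(a_v)(g_v \otimes c)$ on $A'^*$: by $A'$-conciseness of $p'$, any nonzero kernel element must be a scalar multiple of the unique $\alpha_0 \in A'^*$ with $p'(\alpha_0) = g_v \otimes c$ (should such $\alpha_0$ exist), so $\dim \ker \leq 1$, giving $\dim \tilde{A}' \geq \dim A' - 1$. If moreover $\Prime = \emptyset$ in the decomposition of $p(A^*)$, then Lemma~\ref{lem_no_rank_1_slice_after_digestion} forces $W' = p'(A'^*)$ to contain no rank-$1$ element, whence $g_v \otimes c \notin W'$, no such $\alpha_0$ exists, the kernel vanishes, and $\tilde{A}' = A'$.
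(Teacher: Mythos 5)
Your proposal is correct and follows essentially the same route as the paper: it applies the repletion--digestion machinery (Corollary~\ref{cor_can_replete_and_digest}) to $p(C^*)$ with respect to $v\in\Prime_C$, with your explicit realization $\tilde p=p-v\otimes c$ being just a concrete description of the digested tensor that the paper leaves implicit. Your arguments for clauses \ref{c:repl_hook_v} and \ref{c:repl_hook_vi} (the kernel of the slice map is at most one-dimensional by $A'$-conciseness, and a nonzero kernel element would produce a rank-one slice contradicting Lemma~\ref{lem_no_rank_1_slice_after_digestion}) coincide with the paper's.
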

\begin{proof}
For  $p(C^*) \subseteq A \otimes B$ we choose a minimal decomposition $V_C= \linspan{V_{C,\Seg}} \subset A\otimes B$ and $\fromto{\Prime_C, \Bis_C,}{\Mix_C}$
     are as in Notation~\ref{notation_Prime_etc} (with added the subscript ``${}_{C}$'' to stress that $B\otimes C$ is changed to $A\otimes B$).
Since $v$ is a rank one matrix and is contained in $A' \otimes B'$, we can choose a minimal decomposition such that $v \in \Prime_C$.
 The process of repletion with respect to $v$ brings no change, because $v$ is already contained in $p(C^*)$.
 
 The matrix $v $ is contained in $A' \otimes G'$, so it is $(0,\mathbf{k})$-hook shaped. In the process of digestion with respect to $v$ we obtain the new tensor $\tilde{p}$ such that $\tilde{p}(C''^*)= \tilde{p}''(C''^*)= {p''(C''^*)}$ and $\tilde{p}(C'^*)=\tilde{p}'(C'^*)$ which differs from $ {p'(C'^*)}$ only in the places corresponding to $A' \otimes G'$. We conclude \ref{c:repl_hook_i} and  \ref{c:repl_hook_iii}. Items \ref{c:repl_hook_ii} and \ref{c:repl_hook_iv} follow from Corollary~\ref{cor_can_replete_and_digest}. 
 
Let us assume that $p'$ is $A'$-concise. Observe, that 
$p'=\tilde{p}'+v\otimes (C'/\tilde{C}') \in A' \otimes B' \otimes C'$.
 Since $v$ is a rank 1 matrix, then either $v \in \tilde{A}' \otimes B'$ or there exists $a \in A' \setminus \tilde{A}'$ such that $v \in \langle\tilde{A}', a\rangle \otimes B'$ (see Figure \ref{fig_digestion_concise}).  In the first case $A' =  \tilde{A}' $. In the second case  $\langle\tilde{A}', a\rangle =A'$. We obtained \ref{c:repl_hook_v}. Let $0 \neq \alpha\in A'^*$ be 
such that $\alpha^\perp = \tilde{A}'$.
 Then, $p'(\alpha)\in B' \otimes C'$ is a rank one matrix contained in $B' \otimes (\gamma=1)$. If $\Prime_A = \emptyset$, then we have a contradiction with Lemma~\ref{lem_no_rank_1_slice_after_digestion}. We proved \ref{c:repl_hook_vi}.
 
 \begin{figure}
\centering
\includegraphics[scale=0.06]{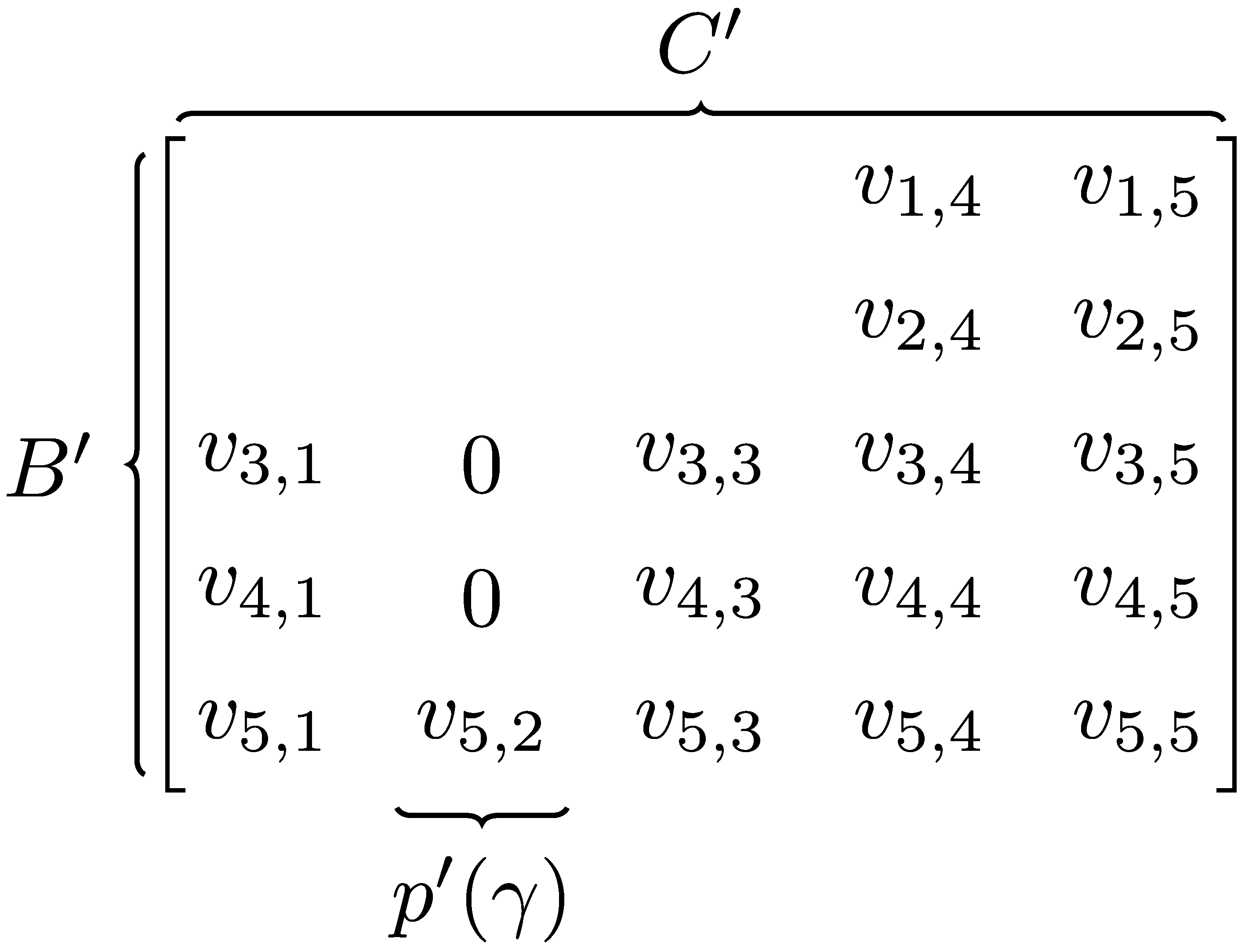}
\includegraphics[scale=0.06]{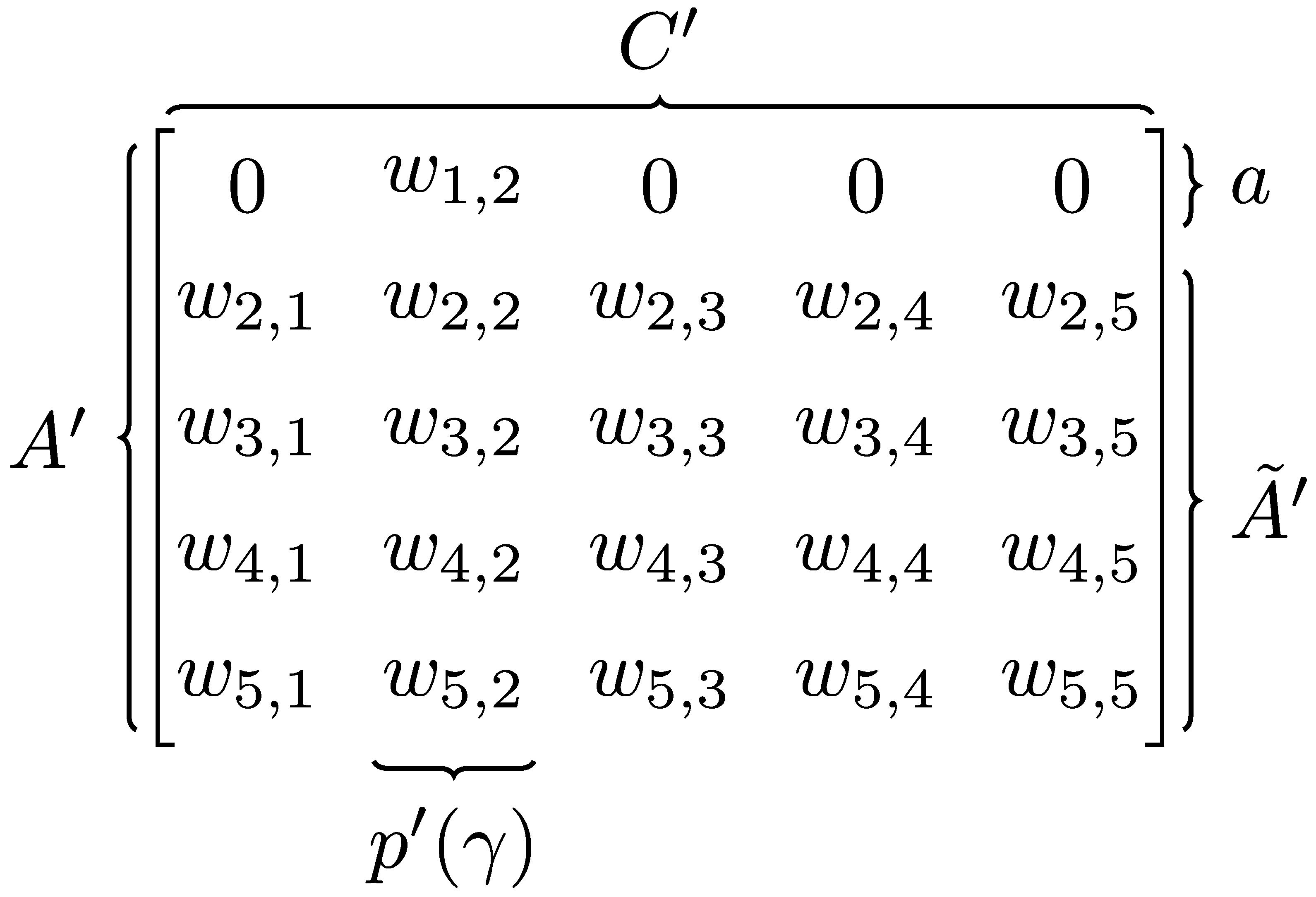}
\caption{Let a tensor $p=p'+p'' \in (A'\oplus A'') \otimes (B'\oplus B'') \otimes (C'\oplus C'')$, where $\dim(A',B',C')=(5,5,5)$ and $p(A'^*)$ is $(3,2)$-hook shaped. At the figure, there are shown sample spaces of slices $p'(A'^*)$ and $p'(B^*)$. Zero elements are denoted either by a blank space or explicitly by 0.  Notice, that $p'$ is $A'-concise$. We replete and digest $p(C^*)$ with respect to $p(\gamma)$, where
$\gamma \in (C'/H')^*$.
In result we obtain tensor $\tilde{p'} \oplus p''$ such that $p'=\tilde{p}'+p'(\gamma)\otimes \gamma \in A' \otimes B' \otimes C'$.
Since $p(\gamma)$ is a rank 1 matrix, then either $v \in \tilde{A}' \otimes B'$ or there exists $a \in A' \setminus \tilde{A}'$ such that $v \in \langle\tilde{A}', a\rangle \otimes B'$. In the second case, let $\alpha\in A'^*$ be dual to $a$. Then $p'(\alpha)\in B' \otimes C'$ is a rank one matrix contained in $B' \otimes (\gamma=1) \subseteq B' \otimes C'$.
} \label{fig_digestion_concise}
\end{figure}
\end{proof}

It follows from Corollary~\ref{c:removing_rank_1_slice}, that if in the pair of linear subspaces $(W',W'')$ without the rank additivity property, one of them, say $W'$ is $(1,\mathbf{k})$-hook shaped, then we can construct $\hat{W'}$, which is $(0,\mathbf{k})$ hook shaped and the pair $(\hat{W}',W'')$ does not poses the rank additivity property either.

\begin{prop}\label{p:1_n_hook_shaped}
  Suppose $W'\subset B'\otimes C'$, $\mathbf{k}<\bfc'$, $W'$ is $(1,\mathbf{k})$-hook shaped and $W''\subset B''\otimes C''$ 
      is an arbitrary subspace.
  If the additivity of the rank fails for $W'\oplus W''$, then it also fails for smaller subspaces $\hat{W}',W''$, where $\hat{W}' \subset B' \otimes \kk^{\mathbf{k}}$.
\end{prop}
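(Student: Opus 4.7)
The plan is to reduce $W'$ to a $(0,\mathbf{k})$-hook shape---that is, into $B' \otimes \kk^{\mathbf{k}}$---by iteratively shaving off one dimension of $C'$ at a time via Corollary~\ref{c:removing_rank_1_slice}, arranging at each stage that the relevant slice in Corollary~\ref{c:removing_rank_1_slice} is automatically rank one. First I would pass to tensors using Proposition~\ref{prop_translete_to_subspace_language}: pick $p = p' \oplus p'' \in A \otimes B \otimes C$ with $p'(A'^*) = W'$ and $p''(A''^*) = W''$, where $A', A''$ are chosen so that $p', p''$ are concise in the $A$-factor. Fix linear subspaces $G' \subset B'$ and $H' \subset C'$ of dimensions $1$ and $\mathbf{k}$ witnessing the hook structure, which I rewrite as $p' \in A' \otimes G' \otimes C' + A' \otimes B' \otimes H'$.

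The inductive step goes as follows. If $W' \subset B' \otimes H'$, take $\hat{W}' := W' \subset B' \otimes H' \cong B' \otimes \kk^{\mathbf{k}}$ and we are done (the failure of additivity is unchanged). Otherwise, because $\dim C' > \mathbf{k}$, the dual $(C'/H')^*$ is nonzero, and I pick $\gamma \in (C'/H')^*$ with $p'(\gamma) \neq 0$. Since $\gamma$ vanishes on $H'$ and on $C''$, the summand $A' \otimes B' \otimes H'$ contracts to zero against $\gamma$ and $p''$ contributes nothing, so $p(\gamma) = p'(\gamma) \in A' \otimes G'$. As $\dim G' = 1$, this nonzero element has rank exactly $1$ in $A' \otimes B'$, so the hypothesis of Corollary~\ref{c:removing_rank_1_slice} is satisfied.

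Applying that corollary with $v := p(\gamma)$ produces $\tilde{p} = \tilde{p}' \oplus p''$ with $\tilde{p}' \in \tilde{A}' \otimes B' \otimes \gamma^{\perp}$, so $\dim C'$ has dropped by one. Part (iii) of the corollary says the new $\tilde{W}'$ is still $(1, \mathbf{k})$-hook shaped; concretely, the same $G' \subset B'$ and the same $H'$ (which sits inside $\gamma^{\perp}$ because $\gamma|_{H'} = 0$) still serve as witnesses, so the induction hypothesis is restored verbatim. Part (iv) guarantees that additivity continues to fail. Iterating $\bfc' - \mathbf{k}$ times lowers $\dim C'$ to $\mathbf{k}$, at which point $C' = H'$ and the final $\hat{W}' \subset B' \otimes \kk^{\mathbf{k}}$ is the required subspace. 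I do not expect a genuine obstacle: the only delicate bookkeeping is checking that the hook witnesses $G', H'$ are preserved by each iteration (immediate from $H' \subset \gamma^{\perp}$), and that the required rank-one slice exists \emph{for free} from the one-dimensionality of $G'$, which is exactly why the ``$e=1$'' hypothesis in the statement is what makes the argument go through.
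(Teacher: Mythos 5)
Your proof is correct and follows essentially the same route as the paper: both iteratively apply Corollary~\ref{c:removing_rank_1_slice} to a slice $p(\gamma)$ with $\gamma$ vanishing on the hook witness in $C'$, using the one-dimensionality of $G'$ to get the rank-one hypothesis for free, and repeat $\bfc'-\mathbf{k}$ times. Your explicit handling of the case $W'\subset B'\otimes H'$ (where the paper instead assumes conciseness) is a minor, harmless variation.
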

\begin{proof}
We can assume that $W'$ is concise. It is straightforward to verify, that for every $\gamma'^*\in (C'/F')^* \subset (C')^*$ we have $p(\gamma'^*) = p'(\gamma'^*) \in A' \otimes B'$ has rank $1$.
        Then from Corollary~\ref{c:removing_rank_1_slice}, the process of repletion and digestion with respect to $p(\gamma'^*) $
leads to another pair $\widetilde{W}',W''$, where  $\widetilde{W}'\subset B'\otimes \kk^{\bfc'-1}$. Subspace $\widetilde{W}'$ is again $(1,\mathbf{k})$-hook shaped. The new pair is also a counterexample to the additivity of the rank, if the starting pair was. 
        
        By a consecutive repeating the process for another $\bfc'-\mathbf{k}-1$ times, we shrink $W'$ to $(0,\mathbf{k})$-hook shaped $\hat{W'}$. Together with $W''$ it creates the desired pair $(\hat{W}',W'')$ from the statement of the proposition.
\end{proof}

Proposition~\ref{p:1_n_hook_shaped} allows us to generalize both \cite[Proposition 3.17]{BPR} and Theorem~\ref{t:jaja_2_0_hook}. The latter one can be thought of as a theorem about $(0,2)$-hook shaped spaces. Observe, that we do not need the base field to be algebraically closed.
   
\begin{cor}\label{cor_1_2_hook_shaped}
Let $W =  W'\oplus W''$ 
where $W'$ is a $(1,2)$-hook shaped,
then the additivity of rank $R(W) =  R(W')+ R(W'')$ holds.
\end{cor}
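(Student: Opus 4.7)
The plan is to argue by contradiction and reduce the $(1,2)$-hook case to the classical two-factor additivity result (Theorem~\ref{t:jaja_2_0_hook}), using the reduction already prepared in Proposition~\ref{p:1_n_hook_shaped}. So, suppose for contradiction that $R(W) < R(W') + R(W'')$. By hypothesis $W' \subset G' \otimes C' + B' \otimes H'$ with $\dim G' = 1$ and $\dim H' = 2$.

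First I would dispose of the degenerate sub-case $\bfc' \le 2$. Here $W' \subset B' \otimes C'$ with $\dim C' \le 2$; translating to tensors via Proposition~\ref{prop_translete_to_subspace_language}, the corresponding $p' \in A' \otimes B' \otimes C'$ has a factor of dimension at most $2$, so Theorem~\ref{t:jaja_2_0_hook} immediately forces $R(W) = R(W') + R(W'')$, contradicting our assumption.

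In the remaining case $\bfc' > 2$, I would apply Proposition~\ref{p:1_n_hook_shaped} with $\mathbf{k} = 2 < \bfc'$. The proposition guarantees that the failure of additivity for $W' \oplus W''$ propagates to a failure of additivity for a pair $\hat{W}' \oplus W''$ with $\hat{W}' \subset B' \otimes \kk^{2}$. The new $\hat{W}'$ is therefore $(0,2)$-hook shaped: the associated tensor $\hat{p}'$ lives in $A' \otimes B' \otimes \kk^{2}$, whose third factor has dimension $2$. Theorem~\ref{t:jaja_2_0_hook} applied to $\hat{p}' \oplus p''$ now yields $R(\hat{W}' \oplus W'') = R(\hat{W}') + R(W'')$, contradicting the fact that additivity was supposed to fail for $\hat{W}' \oplus W''$.

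There is essentially no genuine obstacle: the two ingredients are already assembled in the excerpt, and the only care needed is splitting off the boundary sub-case $\bfc' \le 2$ so that the hypothesis $\mathbf{k} < \bfc'$ of Proposition~\ref{p:1_n_hook_shaped} is satisfied. Note also that both Theorem~\ref{t:jaja_2_0_hook} and Proposition~\ref{p:1_n_hook_shaped} are valid over an arbitrary base field $\kk$, so no algebraic closedness is required for the argument.
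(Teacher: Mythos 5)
Your proof is correct and follows essentially the same route as the paper: reduce via Proposition~\ref{p:1_n_hook_shaped} to a $(0,2)$-hook shaped $\hat{W}'\subset B'\otimes\kk^2$ and then invoke Theorem~\ref{t:jaja_2_0_hook}. Your explicit treatment of the boundary sub-case $\bfc'\le 2$ (needed so that the hypothesis $\mathbf{k}<\bfc'$ of Proposition~\ref{p:1_n_hook_shaped} is met) is a small point of care that the paper's one-line proof leaves implicit.
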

\begin{proof}
   We use Proposition~\ref{p:1_n_hook_shaped} to reduce the problem to the case when $W' \subseteq B' \otimes C'$ is $(0,2)$-hook shaped. Then translating Theorem~\ref{t:jaja_2_0_hook} to a language of subspaces (see Lemma~\ref{lem_rank_of_space_equal_to_rank_of_tensor}), we obtain that the pair $(W', W'')$ has rank additivity property. 
\end{proof}
For a future reference, we state what we know about the case when 
the subspace $\Mix$ is not concise in $(E' \oplus E'') \otimes (F' \oplus F'')$. Say $E''$ can be replaced by a smaller one, $\widetilde{E}''$ such that $\Mix \subseteq (E' \oplus \widetilde{E}'') \otimes (F' \oplus F'')$. To make the proof clearer, this time we assume that $W''$ is hook shaped.

\begin{lemma}\label{lem_mix_niedomaga}
Assume $\Bis=\emptyset$ 
 and $\mathbf{k}$ is the smallest natural number such that 
$W''$ is $(\mathbf{k},\dim(F''))$-hook shaped. Let $\widetilde{E}''\subseteq E''$ and $\widetilde{F}'\subseteq F'$ be the smallest subspaces such that $\Mix \subset (E' \oplus \widetilde{E}'') \otimes (\widetilde{F}' \oplus F'')$. 
The additivity of ranks $R(W) =  R(W')+ R(W'')$ holds if all of the following conditions are fulfilled:
 \begin{enumerate}
  \item $\dim(\widetilde{E}'') \leq \mathbf{k} -1$, \label{it:mix_niedomaga}  
\item $\pi_{E' \oplus {B''}}(\VL)$ is linearly independent and concise in $(B' / E') \otimes F'$, \label{it:pi_vl_jest_lnz}
  \item $\pi_{{B'} \oplus \widetilde{E}''}\pi_{C''}(\HR)$ is linearly independent. \label{it:hr_lnz}
\end{enumerate}
\end{lemma}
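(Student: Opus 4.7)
The plan is to argue by contradiction and strengthen the counting inequalities of Proposition~\ref{prop_projection_inequalities} via hypothesis~(i). Suppose $R(W)<R(W')+R(W'')$; then Corollary~\ref{cor_inequalities_HL__Mix} gives $\mix\ge 1$, and since the maximality clauses in Notation~\ref{notation_Prime_etc} forbid an element of $\Mix$ from being reclassified as $\HL$, $\VR$, $\Prime$, or $\Bis$, every $m\in\Mix$ must have genuinely nonzero components in both $E''\subseteq B''$ and $F'\subseteq C'$; in particular $\dim\widetilde{E}''\ge 1$ and $\dim\widetilde{F}'\ge 1$.

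The key refinement is that hypothesis~(i), $\Mix\subset(E'\oplus\widetilde{E}'')\otimes(\widetilde{F}'\oplus F'')$, gives the sharper containments $\pi_{B''}\pi_{C''}(\Mix)\subset E'\otimes\widetilde{F}'$ and $\pi_{B'}\pi_{C'}(\Mix)\subset\widetilde{E}''\otimes F''$. Running the argument of Proposition~\ref{prop_projection_inequalities} with these inclusions (and using $\Bis=\emptyset$) yields
\begin{align*}
R(W') &\le \prim+\hl+\vl+\min(\mix,\bfe'\cdot\dim\widetilde{F}'),\\
R(W'') &\le \hr+\vr+\min(\mix,\dim\widetilde{E}''\cdot\bff'').
\end{align*}
Adding these and subtracting $R(W)=\prim+\hl+\vl+\hr+\vr+\mix$, the failure of additivity forces
\[
  \mix < \min(\mix,\bfe'\cdot\dim\widetilde{F}')+\min(\mix,\dim\widetilde{E}''\cdot\bff''). \qquad(\star)
\]

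To contradict $(\star)$ I use the hook property of $W''$ together with~(ii) and~(iii). Since $\Bis=\emptyset$, one has $\pi_{B'}\pi_{C'}(V)\subset L\otimes C''+B''\otimes F''$, where $L\subseteq E''$ is the $B''$-support of $\HR$; the containment $W''\subseteq\pi_{B'}\pi_{C'}(V)$ and the minimality of $\mathbf{k}$ give $\dim L\ge\mathbf{k}$, so via $E''=L+\widetilde{E}''$ and~(i) the image of $L$ in $E''/\widetilde{E}''$ has dimension at least $\mathbf{k}-\dim\widetilde{E}''\ge 1$. Hypothesis~(iii) rules out any $\HR$-element having its $B''$-part inside $\widetilde{E}''$, so the $B''$-components of $\HR$ span the image of $L$ in $E''/\widetilde{E}''$ and $\hr\ge\mathbf{k}-\dim\widetilde{E}''$; the symmetric argument, using the conciseness and independence of $\pi_{E'\oplus B''}(\VL)$ from~(ii), gives $\vl\ge\max(\bfb'-\bfe',\bff')$.

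Feeding these lower bounds on $\hr,\vl$ back into $(\star)$, together with Lemma~\ref{lemma_bound_r'_e'_R_w'} and the ambient bound $\mix\le(\bfe'+\dim\widetilde{E}'')(\dim\widetilde{F}'+\bff'')$, a case analysis on which minimum in $(\star)$ is attained produces the contradiction. The main obstacle is the subcase in which both minima are attained by the products $\bfe'\dim\widetilde{F}'$ and $\dim\widetilde{E}''\bff''$, so that $(\star)$ reduces to $\mix<\bfe'\dim\widetilde{F}'+\dim\widetilde{E}''\bff''$; closing this case requires all three hypotheses simultaneously, exploiting the rank-one independence of $\Mix$ inside the confined space $(E'\oplus\widetilde{E}'')\otimes(\widetilde{F}'\oplus F'')$ to extract the final contradiction.
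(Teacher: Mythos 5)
Your proposal does not close the argument, and the counting strategy you set up cannot close it. The inequality $(\star)$ you derive is vacuous in the typical case: whenever both minima in $(\star)$ are attained by $\mix$ itself, $(\star)$ reads $\mix<2\mix$, which holds for every $\mix\ge1$ and yields no contradiction. Hypothesis (i) only bounds $\dim\widetilde{E}''$ by $\mathbf{k}-1$, and $\mathbf{k}$ may be large, so the products $\bfe'\cdot\dim\widetilde{F}'$ and $\dim\widetilde{E}''\cdot\bff''$ need not be smaller than $\mix$; your refinement of Proposition~\ref{prop_projection_inequalities} therefore gives essentially nothing beyond the original statement, which by itself is known not to imply additivity. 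The entire burden then falls on your final paragraph, where you announce a ``case analysis'' and assert that the hard subcase is closed by ``exploiting the rank-one independence of $\Mix$'' --- but no argument is given there. That step is the actual content of the lemma, and it is missing. (You also misidentify the problematic subcase: the one with both minima equal to $\mix$, not the one with both minima equal to the products, is where $(\star)$ carries no information.)

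The paper's proof is not a counting argument but a direct support-chasing one. After repleting and digesting to remove $\Prime$ (the lemma assumes only $\Bis=\emptyset$), the minimality of $\mathbf{k}$ together with (i) produces an element $w\in W''$ whose image under $\pi_{B'\oplus\widetilde{E}''}\pi_{C'\oplus F''}$ is nonzero. Writing $w$ in the basis $\ccB$, only $\HR$ can supply this component, since (i) confines $\Mix$ to $(E'\oplus\widetilde{E}'')\otimes(\widetilde{F}'\oplus F'')$; hence some $h\in\HR$ with $\pi_{C''}(h)\ne0$ occurs in the expression. By (i) and (iii) the resulting $C'$-component cannot be cancelled using only $\HR$ and $\Mix$, so a $\VL$ element must also occur; and (ii) then forces $\pi_{E'\oplus B''}(w)\ne0$, contradicting $w\in W''\subset B''\otimes C''$. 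Your middle paragraph contains fragments of the right ingredients (the role of $\mathbf{k}$, of (iii), and of (ii)), but you use them only to extract numerical lower bounds on $\hr$ and $\vl$ rather than to trace the cancellation of supports, which is what actually makes the three hypotheses bite. As written, the proposal has a genuine gap and does not constitute a proof.
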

\begin{proof}
 Let us assume by contradiction, that additivity of ranks does not hold. 
 Firstly we show that we can assume that there is no $\Prime$ or $\Bis$ in the decomposition of $W=W'\oplus W'' :=(p'+p'')(A^*)$. If there is, we replete and digest obtaining $\digestion\repletion{W'}, \digestion\repletion{W''}$ and show a contradiction for this new pair.  Everything we need to know to make this assumption is given by Corollary~\ref{cor_can_replete_and_digest}. 
 
 To make it explicit, we have the following facts. By Corollary~\ref{cor_can_replete_and_digest} \ref{it_cor_can_RD_4}, if the space $\langle \Prime, \Bis, \HR, \VR, \VL, \VR, \Mix \rangle$ determines the minimal decomposition for $W'\oplus W''$, then the subspace  $\langle \HR, \VR, \VL, \VR, \Mix \rangle$ determines the minimal decomposition for $\digestion\repletion{W'}\oplus \digestion\repletion{W''}$. Thus, the new pair $\{\digestion\repletion{W'}, \digestion\repletion{W''}\}$ still fulfills conditions from the statement. By Corollary~\ref{cor_can_replete_and_digest},
 if we prove the rank additivity for $\{\digestion\repletion{W'}, \digestion\repletion{W''}\}$, we show it for starting tensors as well, contradicting our assumption.
 
 From condition \ref{it:mix_niedomaga},
there exists an element $w \in W''$ such that $\pi_{B' \oplus \widetilde{E}''}\pi_{C' \oplus F''} (w)$ is nonzero (cf. Figure \ref{fig_E_tilde}). To present $w$ as a linear combination of vectors from $\HL,\HR, \VL,\VR, \Mix$ we need an element $h \in \HR$ such that $\pi_{B' \oplus \widetilde{E}''}\pi_{C' \oplus F''}(h)$ is nonzero. Notice, that $\pi_{C''}(h)$ is nonzero. To get rid of $\pi_{C''}(h)$ in the presentation of $w$ we have to use an element $v \in \langle \VL \rangle$. Indeed, from conditions \ref{it:mix_niedomaga} and \ref{it:hr_lnz} follows that we cannot restrict ourselves to elements from $\Mix$ or $\HR$ for it.  Now, from condition \ref{it:pi_vl_jest_lnz} $\pi_{E'\oplus B''}(v) \neq 0$. Thus $\pi_{E'\oplus B''}(w) \neq 0$, which is a contradiction with the assumption that $w \in W''$.

 \begin{figure}
\centering
\includegraphics[scale=0.06]{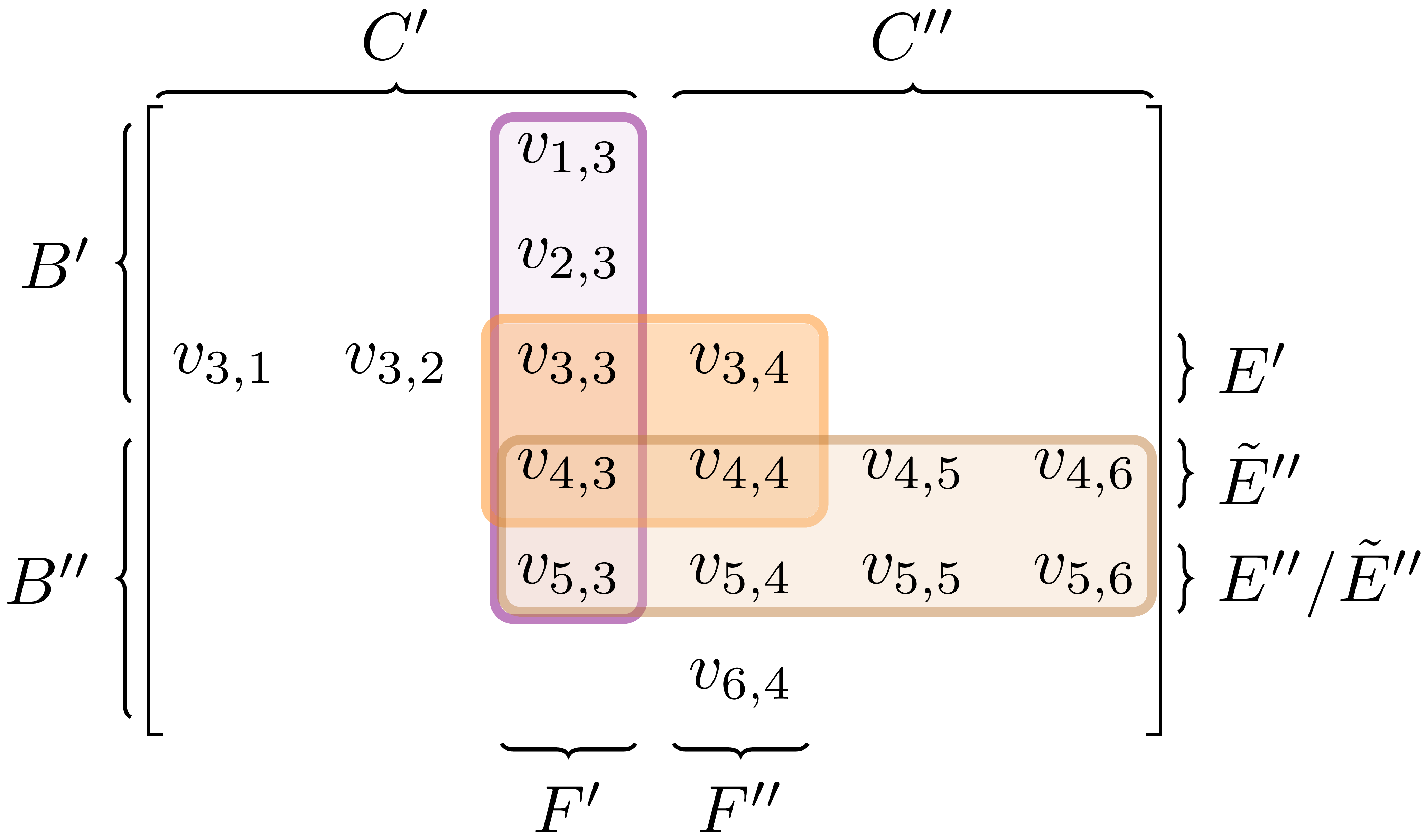}
\caption{We use the notation from Lemma~\ref{lem_mix_niedomaga}. At the figure, there are shown sample spaces of slices $\digestion\repletion{W'}\oplus \digestion\repletion{W''} \subseteq (\kk^3 \oplus \kk^3)\otimes(\kk^3 \oplus \kk^3)$ such that $\mathbf{k}=2$, $\dim(\tilde{E}'')=1$.
 Zero elements are denoted by a blank space.  
  We highlighted subspaces 
\VL{} (purple with entries $v_{1,3},v_{2,3},v_{3,3},v_{4,3},v_{5,3}$), 
\HR{} (brown with entries $v_{4,3},v_{4,4},v_{4,5},v_{4,6},v_{5,3},v_{5,4},v_{5,5},v_{5,6}$),
and \Mix{} (middle orange square with entries $v_{3,3},v_{3,4},v_{4,3},v_{4,4}$).
} \label{fig_E_tilde}
\end{figure}
\end{proof}

\subsection{Additivity of the tensor rank for small tensors}\label{sect_proofs_of_main_results_on_rank}

Assuming the base field is $\kk=\CC$, 
   one of the smallest cases not covered by the Theorem \ref{thm_additivity_rank_intro_jarek} would be
   the case of $p', p''\in \CC^4\otimes \CC^4\otimes \CC^3$.
The generic rank (that is, the rank of a general tensor) in $\CC^4\otimes \CC^4\otimes \CC^3$
   is $6$, moreover \cite[p.~6]{atkinson_stephens_maximal_rank_of_tensors} 
   claims the maximal rank is $7$ (see also \cite[Prop.~2]{sumi_miyazaki_sakata_maximal_tensor_rank}). 
To prove Corollary~\ref{cor_additivity_C_443_443}, i.e rank additivity property for the mentioned cases, we need to establish the following lemma.

\begin{lem}\label{lem_if_rank_3_more_than_dimension}
    Let us use Notation~\ref{notation_Prime_etc}. Assume $\kk$ is an algebraically closed base field, $W'\subseteq \kk^{\bfb'} \otimes \kk^{\bfc'}$ is of dimension $\bfa'$, $W''\subseteq \kk^{\bfb''} \otimes \kk^{\bfc''}$ is of dimension $\bfa''$, the corresponding tensor is $p = p' \oplus p'' \in A' \otimes B' \otimes C' \oplus A'' \otimes B'' \otimes C''$ and:
    \begin{enumerate}
     \item $W'$ is concise,
\item $\Prime=\emptyset$, $\Bis=\emptyset$, 
\item \label{it:eq_lem_if_rank_3_more_than_dimension}
   $\bfa'+3= R(W')$,
     \item \label{it:ineq_lem_if_rank_3_more_than_dimension}
   $R(W') \leq \bfb' + \bfc'$,
\end{enumerate}
   Then either additivity of ranks holds or  all the following conditions are satisfied
   \begin{itemize}
    \item $R(W')+R(W'') -1= R(W)$
    \item $R(W')=\hl+\vl+2$
    \item $R(W'')=\hr+\vr+1$
    \item $\mix=2$ 
   \end{itemize}
\end{lem}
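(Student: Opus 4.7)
The plan is to assume additivity fails, so $d := R(W') + R(W'') - R(W) \geq 1$, and to use the combinatorics of the decomposition $V$ to derive the four conclusions. Because $\Prime = \Bis = \emptyset$ we have $R(W) = \hl + \vl + \hr + \vr + \mix$, and Proposition~\ref{prop_projection_inequalities} specializes to
\[
  \hl + \vl + \min(\mix, \bfe'\bff') \geq R(W'), \qquad \hr + \vr + \min(\mix, \bfe''\bff'') \geq R(W'').
\]
Summing and substituting for $R(W)$ yields the master inequality
\[
  \min(\mix, \bfe'\bff') + \min(\mix, \bfe''\bff'') \geq \mix + d.
\]
Furthermore, Corollary~\ref{cor_bounds_on_es_and_fs} together with $R(W') - \dim W' = 3$ forces $\bfe', \bff' \leq 2$ (hence $\bfe'\bff' \leq 4$), and Proposition~\ref{prop_SAC_if_E'=0} ensures that all of $\bfe', \bff', \bfe'', \bff''$ are positive.

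The heart of the argument is to pin down $\mix = 2$ and $\min(\mix, \bfe'\bff') = 2$ exactly. To rule out the regime $\min(\mix, \bfe'\bff') \geq 3$, which forces $\bfe' = \bff' = 2$ and $\mix \geq 3$, I would invoke Lemma~\ref{lem_mix_niedomaga}: in this regime $\pi_{B''}\pi_{C''}(\langle\Mix\rangle)$ fills a substantial portion of $E' \otimes F' \cong \kk^2 \otimes \kk^2$. Combining $\Prime = \emptyset$ with Lemma~\ref{lem_no_rank_1_slice_after_digestion} (so $W'$ carries no rank-one slice), the maximality of $\Prime \sqcup \Bis$ in the choice of $\ccB$ from Notation~\ref{notation_Prime_etc}, and the tight hypothesis $R(W') \leq \bfb' + \bfc'$, one verifies conditions (i)--(iii) of Lemma~\ref{lem_mix_niedomaga} and concludes additivity, contradicting the standing assumption. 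A symmetric analysis on the $W''$ side, together with the possibility of swapping the roles of the primes, then furnishes the complementary bound $\mix \leq 2$ and the lower bound $\min(\mix, \bfe'\bff') \geq 2$.

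Once $\mix = 2$ and $\min(\mix, \bfe'\bff') = 2$ are in hand, the first Proposition~\ref{prop_projection_inequalities} inequality gives $\hl + \vl \geq R(W') - 2$, and the master inequality with $\mix = 2$ and $d \geq 1$ forces $\min(\mix, \bfe''\bff'') \geq 1$, so $\hr + \vr \geq R(W'') - 1$. Adding these with $\mix = 2$ yields $R(W) \geq R(W') + R(W'') - 1$; combined with $d \geq 1$ we get $d = 1$ and every intermediate inequality becomes an equality, producing $\hl + \vl = R(W') - 2$, $\hr + \vr = R(W'') - 1$, $\mix = 2$, and $R(W) = R(W') + R(W'') - 1$.

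The main obstacle is the verification of conditions (ii) and (iii) of Lemma~\ref{lem_mix_niedomaga} in the ``large $\Mix$'' regime: one must show that $\pi_{E' \oplus B''}(\VL)$ is linearly independent and concise in $(B'/E') \otimes F'$, and that $\pi_{B' \oplus \widetilde{E}''}\pi_{C''}(\HR)$ is linearly independent. These conciseness-type statements need to be extracted from the maximality of $\prim + \bis$ in the basis $\ccB$ and from the tight numerical bound $R(W') \leq \bfb' + \bfc'$, which together prevent the $\VL$ and $\HR$ contributions from degenerating onto smaller subspaces.
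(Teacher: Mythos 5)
There is a genuine gap. Your setup is fine as far as it goes: with $\Prime=\Bis=\emptyset$ the identity $R(W)=\hl+\vl+\hr+\vr+\mix$ holds, and summing the two inequalities of Proposition~\ref{prop_projection_inequalities} does yield your ``master inequality''. But everything that actually pins down the conclusion is deferred to a step you admit you cannot carry out, namely verifying hypotheses (ii) and (iii) of Lemma~\ref{lem_mix_niedomaga} in the ``large $\Mix$'' regime. That lemma is the wrong tool here: its hypotheses concern the hook-structure of $W''$ and the conciseness of $\Mix$ in the $\widetilde{E}''$ and $\widetilde F'$ directions, and nothing in the present situation supplies them. The paper's proof goes a different way, and its essential ingredients are entirely absent from your plan: first $\bfe'=\bff'=2$ (via Corollary~\ref{cor_1_2_hook_shaped}, since $\Prime=\emptyset$ makes $W'$ $(\bfe',\bff')$-hook shaped), then $d=1$ immediately from Lemma~\ref{lemma_bound_r'_e'_R_w'} and $R(W')=\bfa'+3$; then the \emph{lower} bounds $\bfb'-1\le\vl$ and $\bfc'-1\le\hl$, obtained by projecting to $(B'/E')\otimes F'\cong\kk^{\bfb'-2}\otimes\kk^2$ and applying Lemma~\ref{lem_dim_2_have_rank_one_matrix} together with Proposition~\ref{p:1_n_hook_shaped} to exclude rank-one slices --- this is precisely where algebraic closedness and the hypothesis $R(W')\le\bfb'+\bfc'$ enter, neither of which your argument ever uses in substance; and finally the \emph{upper} bound $\hl+\vl\le R(W')-2$ via Lemma~\ref{lem_bound_on_rank_for_non_concise_decompositions} applied to $W''\subseteq\pi_{B'}\langle\HR,\VR,\Mix\rangle$. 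The equality $R(W')=\hl+\vl+2$ needs both directions; Proposition~\ref{prop_projection_inequalities} only ever gives one of them.

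There is also a local logical error in your endgame: from $\hr+\vr+\min(\mix,\bfe''\bff'')\ge R(W'')$, knowing $\min(\mix,\bfe''\bff'')\ge 1$ does \emph{not} give $\hr+\vr\ge R(W'')-1$ --- the bound weakens as the min grows, so you would need the opposite inequality $\min(\mix,\bfe''\bff'')\le 1$, which you have not established. As it stands the proposal is a framework with the hard analytic content (and the use of both hypothesis (iv) and algebraic closedness) missing.
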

\begin{proof}
    Let us assume the additivity of ranks does not hold.
Corollary~\ref{cor_bounds_on_es_and_fs} and assumption~(iii) imply that both $\bfe', \bff'$ are at most 2.
       From Corollary~\ref{cor_1_2_hook_shaped} and assumption~(ii) follows that 
       $\bfe'=\bff'= 2$.
        
        Now we show that the defect equals one, i.e. $d:=R(W')+R(W'')-R(W)=1$.
        We have from Lemma~\ref{lemma_bound_r'_e'_R_w'} that $R(W'') + \bfe' + \bfa' \leq R(W)$, thus $R(W')+R(W'')-1 \leq R(W)$ by the condition \ref{it:eq_lem_if_rank_3_more_than_dimension}.
        The opposite inequality follows from the failure of the rank additivity.

        Since $\Prime=\emptyset$, we must have 
        $    W'\subset \linspan{\pi_{C''}(\HL), \pi_{B''}(\VL), \pi_{B''}\pi_{C''}(\Mix)} 
            \subset {E'}  \otimes C' + B'\otimes F'.
        $
        That is, $W'$ is $(2,2)$-hook shaped.
        
It follows from Proposition \ref{p:1_n_hook_shaped} that there are no integers $n<\bfb'$, $m<\bfc'$ such that $W'$ is $(n,1)$-hook shaped or $(1,m)$-hook shaped.  Moreover, we can assume that
\begin{enumerate}[label=(\alph*)]
   \item \label{it:beta_lem_if_rank_3_more_than_dimension} 
   there is no $\beta \in (B'/E')^*$ such that $p(\beta)$ is a rank one matrix, and 
   \item \label{it:gamma_lem_if_rank_3_more_than_dimension}
   there is no $\gamma \in (C'/F')^*$ such that $p(\gamma)$ is a rank one matrix. 
\end{enumerate}

        Next, we show that 
        \begin{equation}\label{eq:bound_on_vl}
        \bfb'-1  \leq \vl.       
        \end{equation}
        For this purpose we consider the projection $\pi_{E'\oplus B''}\colon B \to B'/E'$.
        The related map $B\otimes C \to   (B'/E')\otimes C$ 
        (which by the standard abuse we also denote $\pi_{E'\oplus B''}$), 
        kills all the rank one tensors of types $\HL$, $\HR$, $\VR$ and $\Mix$, possibly
        leaving a few of type $\VL$ alive.
        The image $\pi_{E'\oplus B''}(W) \subset  (B'/E')\otimes F'$ has rank at most $\vl$ 
        and is concise (otherwise, either
        there is  $\beta \in (B'/E')^*$ such that $p(\beta)$ is a rank one matrix
or  $p'$ is not concise, a contradiction in both cases).
        Note, that $(B'/E')\otimes F' \simeq \CC^{\bfb'-2} \otimes \CC^2$.
        Concise linear subspaces $G$ of $\CC^{\bfb'-2} \otimes \CC^2$ need to have a rank at least $\bfb'-2$. 
        
        In the case, when the rank of $G$ is exactly $\bfb' -2$. The tensor $g$ corresponding to  $G$, is contained in the space $\CC^{\bfb' -2} \otimes (B'/E')\otimes F' \simeq \CC^{\bfb' -2} \otimes \CC^{\bfb' -2} \otimes \CC^2$. It follows from 
Lemma~\ref{lem_dim_2_have_rank_one_matrix} that
there exists $\beta'\in (B'/E')^* \subset B'^*$
such that $g(\beta')$ has rank 1.
       Furthermore, both tensors $p'(\beta') \in A' \otimes C'$ and $p(\beta')  \in A \otimes C$ have rank $1$ as well, contradicting \ref{it:beta_lem_if_rank_3_more_than_dimension}.
Thus, $R(\pi_{E'\oplus B''}(W))$ must be at least $\bfb'-1$ and consequently, $\bfb'-1 \leq \vl$. 
Analogously, we can prove
                \begin{equation}\label{eq:bound_on_hl}
                \bfc'-1  \leq \hl.
                \end{equation}

        By Proposition~\ref{prop_projection_inequalities}\ref{item_projection_inequality_short_bis} we obtain
        $
        R(W'')\leq \hr +\vr +\mix = R(W) - (\hl+\vl) .
        $ Thus
        \begin{equation}\label{eq:bound_on_sum_hl_vl}
          \hl+\vl\leq R(W') -1.
        \end{equation}
        and similarly 
        \begin{equation}\label{eq:bound_on_sum_hr_vr}
          \hr+\vr\leq R(W'') -1.
        \end{equation}

        At least one of $\pi_{E' \oplus B''}(\VL)$, $\pi_{F' \oplus C''}(\HL)$ is linearly independent. Otherwise, arguing as before, we see that
         $\pi_{E'\oplus B''}(W)$ has rank at most $\vl-1$. Thus,          
        $\bfb'-1\leq \vl-1$ and similarly $\bfc'-1\leq \hl-1$. 
        Together with an inequality \ref{eq:bound_on_sum_hl_vl} it gives $\bfb' + \bfc' \leq \hl + \vl \leq R(W') -1 \leq \bfb'+\bfc' -1$, a contradiction. The last inequality is implied by the condition \ref{it:ineq_lem_if_rank_3_more_than_dimension}.

        Now we will prove, that $R(W')\leq \hl + \vl+2$.
        Let us assume   $\pi_{E' \oplus B''}(\VL)$ is linearly independent. 
        (In the case when $\pi_{F' \oplus C''}(\HL)$ is linearly independent we proceed similarly by exchanging $B'$ with $C'$ and $\HL$ with $\VL$). 
        It follows 
        \begin{equation}\label{eq:niedomaga_mix_hr,vr,mix}
        W''= \pi_{B'} (W'') \subseteq \pi_{B'}(\langle \HR,\VR,\Mix\rangle)
        \end{equation}
        and $W'' \not \subseteq \pi_{B'}(\langle \HR,\VR\rangle)$ because of (\ref{eq:bound_on_sum_hr_vr}). 
We obtain from Lemma~\ref{lem_bound_on_rank_for_non_concise_decompositions} and (\ref{eq:niedomaga_mix_hr,vr,mix}) that
         $R(W'') +1 \leq \hr +\vr +\mix$.
        Together with $\hl+\hr+\vl+\vr+\mix =  R(W') + R(W'')-1$ (because defect $d=1$) it gives $\hl + \vl \leq R(W') -2$. 
        
        Assumptions~\ref{it:eq_lem_if_rank_3_more_than_dimension}, \ref{it:ineq_lem_if_rank_3_more_than_dimension} and Equations \eqref{eq:bound_on_vl}, \eqref{eq:bound_on_hl} imply that
\[ R(W') -2 = \bfa'+ 1 \leq \bfb' + \bfc' -2 \leq \hl + \vl. \] 
        Hence, the inequalities can be changed into equalities. In particular $R(W')=\hl + \vl+2$, thus  \begin{equation}\label{eq:inside_proof_rank_of_W''}
        R(W'')=\hr + \vr + \mix -1. 
        \end{equation}        
        In result $\mix \geq 2$. Indeed, from Corollary~\ref{cor_inequalities_HL__Mix} follows that $\mix \geq 1$. If we assume $\mix = 1$, then we look at the $\pi_{B''}\pi_{C''} (W) = W''$, which is contained in $\langle \pi_{B''}\pi_{C''}(\HL),\pi_{B''}\pi_{C''}(\VL),\pi_{B''}\pi_{C''}(\Mix) \rangle$. Taking the rank into account we obtain $R(W'') \leq R(W)-\vr -\hr = R(W)-R(W')$. It is against our assumption saying, that the additivity of ranks does not hold.
        
        We get from \eqref{eq:bound_on_sum_hr_vr} and \eqref{eq:inside_proof_rank_of_W''} that  $R(W'')\leq R(W'') + \mix -2$, thus $\mix=2$ which ends our proof.
\end{proof}

\begin{rem}
        In Lemma \ref{lem_if_rank_3_more_than_dimension} the assumption $\Bis=\emptyset$ can be relaxed in the following way. Let us assume that the pair $(W', W'')$ is such that
        the additivity of ranks does not hold,
        $\Prime=\emptyset, \Bis \neq \emptyset$ and 
        assumptions (i),(iii),(iv) of the Lemma \ref{lem_if_rank_3_more_than_dimension} are fulfilled.
        We digest and replenish obtaining $(\digestion\repletion{W'}, \digestion\repletion{W''})=(W', \digestion\repletion{W''})$. From Corollary~\ref{cor_can_replete_and_digest} follows that $(W', \digestion\repletion{W''})$ is another pair without the rank additivity property and 
        which fulfills all assumptions of Lemma \ref{lem_if_rank_3_more_than_dimension}. 
        Thus we obtain:
        \begin{itemize}
            \item $R(W')+R(\digestion\repletion{W''}) -1= R(W)$
            \item $R(\digestion\repletion{W''})=\hl+\vl+2$
            \item $R(\digestion\repletion{W''})=\hr+\vr+1$
            \item $\mix=2$ 
        \end{itemize}
\end{rem}

If both tensors $p',p''$ fulfill the assumptions of the Lemma~\ref{lem_if_rank_3_more_than_dimension}, then the pair $(p',p'')$ posses the rank additivity property.

\begin{cor}\label{cor_if_p1_p2_the_same_and_rank_3_more_than_dimension}
    Over an algebraically closed base field $\kk$, assume $W'\subseteq \kk^{\bfb'} \otimes \kk^{\bfc'}$ is of dimension $\bfa'$, $W''\subseteq \kk^{\bfb''} \otimes \kk^{\bfc''}$ is of dimension $\bfa''$, the corresponding tensor is $p = p' \oplus p'' \in A' \otimes B' \otimes C' \oplus A'' \otimes B'' \otimes C''$ and:
    \begin{enumerate}
     \item $W', W''$ are concise,
     \item $\Prime=\Bis=\emptyset$,
     \item \label{it:eq_cor_if_rank_3_more_than_dimension}
   $\bfa'+3= R(W')$,
    \item \label{it:eq_cor_if_rank_3_more_than_dimension2}
   $\bfa''+3= R(W'')$,
     \item \label{it:ineq_cor_if_rank_3_more_than_dimension}
   $R(W') \leq \bfb' + \bfc'$,
     \item \label{it:ineq_cor_if_rank_3_more_than_dimension2}
   $R(W'') \leq \bfb'' + \bfc''$.
\end{enumerate}
   Then additivity of ranks holds.
\end{cor}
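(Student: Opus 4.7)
The plan is straightforward: assume for contradiction that the additivity of the rank fails for the pair $(W',W'')$, and then apply Lemma \ref{lem_if_rank_3_more_than_dimension} twice, once to $W'$ (in the role of the ``primed'' space) and once to $W''$ (in the role of the ``primed'' space, after swapping the roles of ${}'$ and ${}''$). Both applications are legitimate because the hypotheses of Corollary \ref{cor_if_p1_p2_the_same_and_rank_3_more_than_dimension} are perfectly symmetric in ${}'$ and ${}''$: conciseness, the numerical equalities $\bfa' + 3 = R(W')$ and $\bfa'' + 3 = R(W'')$, the bounds $R(W') \leq \bfb' + \bfc'$ and $R(W'') \leq \bfb'' + \bfc''$, and the joint condition $\Prime = \Bis = \emptyset$, are all stable under interchanging ${}'$ and ${}''$.

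Applying Lemma \ref{lem_if_rank_3_more_than_dimension} to $W'$ first, and using that the additivity fails by our contrapositive assumption, we obtain
\[
  R(W') = \hl + \vl + 2, \qquad R(W'') = \hr + \vr + 1, \qquad \mix = 2,
\]
together with $R(W') + R(W'') - 1 = R(W)$. Then applying the same lemma with the roles of ${}'$ and ${}''$ interchanged (as explicitly allowed by the concluding remark of Theorem \ref{thm_additivity_rank_intro_filip}, and by the symmetry of the hypotheses of the lemma itself), we likewise obtain
\[
  R(W'') = \hr + \vr + 2, \qquad R(W') = \hl + \vl + 1, \qquad \mix = 2.
\]
Comparing the two expressions for $R(W')$ yields $\hl + \vl + 2 = \hl + \vl + 1$, which is absurd. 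Hence the initial assumption that additivity fails must be false, and $R(W) = R(W') + R(W'')$.

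There is essentially no substantive obstacle here, since all the work has been done inside Lemma \ref{lem_if_rank_3_more_than_dimension}. The only point worth checking carefully is that one really is allowed to swap the roles of ${}'$ and ${}''$ in that lemma: this amounts to verifying that in the combinatorial labeling of Notation \ref{notation_Prime_etc} the interchange of ${}'$ and ${}''$ swaps $\Prime \leftrightarrow \Bis$, $(\HL,\VL) \leftrightarrow (\HR,\VR)$, while leaving $\Mix$ and $\mix$ invariant, so that the hypotheses of the lemma are indeed preserved under this symmetry. Once that is granted, the contradiction between the two derived values of $R(W')$ closes the argument immediately.
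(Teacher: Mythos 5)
Your proposal is correct and follows essentially the same route as the paper: assume failure of additivity, apply Lemma~\ref{lem_if_rank_3_more_than_dimension} once as stated and once with the roles of ${}'$ and ${}''$ (and correspondingly $(\HL,\VL)\leftrightarrow(\HR,\VR)$) interchanged, and derive the contradiction $\hl+\vl+2 = R(W') = \hl+\vl+1$. Your explicit check that the hypotheses and the combinatorial labeling are symmetric under this swap is exactly the point the paper relies on implicitly.
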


\begin{proof}
 Assume the additivity of ranks does not hold. We obtain from Lemma~\ref{lem_if_rank_3_more_than_dimension}  that
    $R(W')=\hl+\vl+2$, $R(W'')=\hr+\vr+1$. Now we can exchange every $'$ with $''$ and $(\Prime, \HL, \VL)$ with $(\Bis, \HR, \VR)$ and apply the same lemma again. This time we have (in the notation before the exchange) $R(W')=\hl+\vl+1$, $R(W'')=\hr+\vr+2$. A contradiction.
\end{proof}

We end the subsection with 
a positive answer for the question about rank additivity property for $2\times2$ matrix multiplication tensors (over a base field $\CC$), i.e. $\mu_{2,2,2}\oplus\mu_{2,2,2}\in \CC^{4+4} \otimes \CC^{4+4} \otimes \CC^{4+4}$. 
On a way to prove it we need to show the following fact. 
If both tensors from the pair $(p', p'')$ have ranks less or equal 7, or if
at least one of linear spaces from every triple $\{A',B',C'\}$, $\{A'',B'',C''\}$ is 3 dimensional and all other spaces are at most 4 dimensional,
then the additivity of rank holds.
\begin{cor}\label{cor_additivity_C_443_443}
 Over the base field $\CC$, 
 if 
$(\bfa',\bfb',\bfc')=(4,4,3)$, and either $(\bfa'',\bfb'',\bfc'')=(4,4,3)$ or $(\bfa'',\bfb'',\bfc'')=(4,3,4)$, 
then rank additivity holds.
\end{cor}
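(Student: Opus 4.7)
The plan is to match the given dimensions to the hypotheses of Corollary~\ref{cor_if_p1_p2_the_same_and_rank_3_more_than_dimension}: indeed $\bfa'+3 = 7 = \bfb'+\bfc'$ and similarly $\bfa''+3 = 7 = \bfb''+\bfc''$ in both allowed triples. So the whole task reduces to producing a pair $(\tilde W',\tilde W'')$ satisfying the conciseness and $\Prime=\Bis=\emptyset$ hypotheses of that corollary while preserving the (hypothetical) defect of rank additivity. The two cases are essentially the same up to the $B$--$C$ swap in the ``analogous statements'' clause, so I would concentrate on $(\bfa'',\bfb'',\bfc'')=(4,4,3)$ and argue by contradiction, supposing $R(W)<R(W')+R(W'')$.

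First I would reduce to the concise situation. If $p'$ fails to be $A'$-, $B'$-, or $C'$-concise, then $p'$ lives in a tensor space with two factors of dimension at most $3$ (dropping to dimension $\le 2$ in the $C'$-direction), and so Theorem~\ref{thm_additivity_rank_intro_jarek}\ref{it:3_thm_additivity_rank_intro_jarek} (or Theorem~\ref{t:jaja_2_0_hook} in the dimension-$2$ case) immediately yields additivity, contradicting the assumption. The same argument handles $p''$. Hence both tensors may be taken concise, which in particular forces $R(p'),R(p'')\ge 4$.

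Next I would bound the ranks from above. The maximal rank in $\CC^4\otimes \CC^4\otimes \CC^3$ is $7$ by \cite{atkinson_stephens_maximal_rank_of_tensors,sumi_miyazaki_sakata_maximal_tensor_rank}, so $R(p'),R(p'')\le 7$. If one of them is at most $6$, Theorem~\ref{thm_additivity_rank_intro_jarek}\ref{it:2_thm_additivity_rank_intro_jarek} delivers additivity over $\CC$, again a contradiction. Thus we are reduced to the single case $R(p')=R(p'')=7$, for which hypotheses (i), (iii)--(vi) of Corollary~\ref{cor_if_p1_p2_the_same_and_rank_3_more_than_dimension} already hold.

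The remaining and main obstacle is hypothesis (ii), namely $\Prime=\Bis=\emptyset$, for the chosen minimal decomposition $V$ of $W'\oplus W''$. I would tackle this by induction on $|\Prime|+|\Bis|$, using the repletion-and-digestion process of Corollary~\ref{cor_can_replete_and_digest} with respect to a chosen $v\in\Prime\sqcup\Bis$. The defect is preserved, and for the new pair $(S',S'')$ one has $R(S'')=R(W'')$ (when $v\in\Prime$), $\dim S'\in\{3,4\}$ and $R(S')\in\{6,7\}$ (or $8$, which however is excluded by the bound $R(S')\le7$ already used in step two). If $R(S')=6$, the low-rank step applies and yields additivity for $(S',S'')$, contradicting the preserved defect. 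If $\dim S'=3$, then (after restoring conciseness, which only shrinks dimensions further) the tensor corresponding to $S'$ lies in a space with two factors of dimension $\le 3$, and Theorem~\ref{thm_additivity_rank_intro_jarek}\ref{it:3_thm_additivity_rank_intro_jarek} applies. In the only remaining subcase $(\dim S',R(S'))=(4,7)$, the new pair satisfies all the previously verified hypotheses with $|\Prime|+|\Bis|$ strictly decreased, so we iterate. After finitely many steps we reach a pair to which Corollary~\ref{cor_if_p1_p2_the_same_and_rank_3_more_than_dimension} applies, contradicting the persistence of a positive defect. The delicate part is precisely this bookkeeping: one has to ensure at every digestion that the falling back into an ``easier'' situation is detected, and otherwise that $|\Prime|+|\Bis|$ genuinely drops, which is guaranteed by Corollary~\ref{cor_can_replete_and_digest}\ref{it_cor_can_RD_4}.
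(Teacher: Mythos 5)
Your proposal is correct and follows essentially the same route as the paper: reduce to concise tensors via Theorem~\ref{thm_additivity_rank_intro_jarek}\ref{it:3_thm_additivity_rank_intro_jarek} and Theorem~\ref{t:jaja_2_0_hook}, pin down $R(W')=R(W'')=7$ using the maximal-rank bound in $\CC^4\otimes\CC^4\otimes\CC^3$ together with the low-rank cases of Theorem~\ref{thm_additivity_rank_intro_jarek}, eliminate $\Prime$ and $\Bis$ by repletion and digestion (Corollary~\ref{cor_can_replete_and_digest}), and conclude with Corollary~\ref{cor_if_p1_p2_the_same_and_rank_3_more_than_dimension}. The only differences are cosmetic: the paper performs the full repletion--digestion in one step at the outset rather than iterating with your case-by-case bookkeeping, and it invokes part \ref{it:1_thm_additivity_rank_intro_jarek} rather than part \ref{it:2_thm_additivity_rank_intro_jarek} of Theorem~\ref{thm_additivity_rank_intro_jarek} to exclude ranks below $7$.
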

\begin{proof}
Assume that the rank additivity does not hold. 
We can reduce the problem to $(W',W'')=(\digestion\repletion W', \digestion\repletion W'')$ by Corollary~\ref{cor_can_replete_and_digest}.
Further, we assume that both tensors are concise. Indeed, if at least one of the tensors is not concise then it follows from either
Theorem \ref{thm_additivity_rank_intro_jarek}~\ref{it:3_thm_additivity_rank_intro_jarek}
or 
Theorem \ref{t:jaja_2_0_hook} that the additivity of rank holds.

\cite[p.~6]{atkinson_stephens_maximal_rank_of_tensors} 
   claims that the maximal rank of tensors from $\CC^4 \otimes \CC^4 \otimes \CC^3$  is $7$ (see also \cite[Prop.~2]{sumi_miyazaki_sakata_maximal_tensor_rank}).
   As a corollary from Theorem~\ref{thm_additivity_rank_intro_jarek}~\ref{it:1_thm_additivity_rank_intro_jarek}, we may restrict ourselves to the case $R(W')=R(W'')=7$.
Applying Corollary~\ref{cor_if_p1_p2_the_same_and_rank_3_more_than_dimension} we obtain a contradiction.
\end{proof}

We prove the following corollary in a similar way.

\begin{cor}\label{cor_additivity_C_444_444_r_7}
 Over the base field $\CC$, if both 
tensors have ranks less or equal 7,
then rank additivity holds.
 
 In particular, over the base field $\CC$, a pair of $2 \times 2$ matrix multiplication tensor has  rank additivity property, i.e.  $R(\mu_{2,2,2} \oplus \mu_{2,2,2})=R(\mu_{2,2,2}) + R(\mu_{2,2,2})$.
\end{cor}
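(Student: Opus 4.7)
The plan is to mimic the proof of Corollary~\ref{cor_additivity_C_443_443}, steering the pair $(W', W'')$ into the reach of Corollary~\ref{cor_if_p1_p2_the_same_and_rank_3_more_than_dimension}. Suppose for contradiction that the pair $(W', W'')$ corresponding to $(p', p'')$ violates additivity and has $R(W'), R(W'') \le 7$.

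First, by Theorem~\ref{thm_additivity_rank_intro_jarek}\ref{it:2_thm_additivity_rank_intro_jarek}, additivity holds over $\CC$ whenever $R(W') \le 6$ or $R(W'') \le 6$, so we must have $R(W') = R(W'') = 7$. Apply consecutive repletion and digestion (Corollary~\ref{cor_can_replete_and_digest}) with respect to all of $\Prime \sqcup \Bis$ to obtain a pair --- still denoted $(W', W'')$ --- with $\Prime = \Bis = \emptyset$ and persisting failure of additivity. Then shrink the ambient spaces (Proposition~\ref{prop_bound_on_rank_for_non_concise_decompositions_for_vector_spaces}) so that both $W'$ and $W''$ are concise.

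Now apply the dimension sieve: factors of dimension $\le 2$ are ruled out by Theorem~\ref{t:jaja_2_0_hook}; two-small-factor configurations by Theorem~\ref{thm_additivity_rank_intro_jarek}\ref{it:3_thm_additivity_rank_intro_jarek}; Theorem~\ref{thm_additivity_rank_intro_jarek}\ref{it:1_thm_additivity_rank_intro_jarek} and its $B, C$ analogues give $R(W') \ge \max(\bfa', \bfb', \bfc') + 3$ (and likewise for $W''$). Combined with $R(W') = R(W'') = 7$, each profile $(\bfa', \bfb', \bfc')$ and $(\bfa'', \bfb'', \bfc'')$ is forced into a permutation of $(4, 4, 4)$ or $(4, 4, 3)$. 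A short case check shows that a single global $\{A, B, C\}$-permutation (permitted by the ``analogous statements'' clause) places $\bfa' = \bfa'' = 4$. All hypotheses of Corollary~\ref{cor_if_p1_p2_the_same_and_rank_3_more_than_dimension} are then met: conciseness; $\Prime = \Bis = \emptyset$; $\bfa' + 3 = \bfa'' + 3 = 7 = R(W') = R(W'')$; and $R(W') = 7 \le \bfb' + \bfc'$ (and analogously for $W''$) in every admissible sub-profile. The corollary then forces additivity, contradicting the standing assumption. The consequence $R(\mu_{2,2,2} \oplus \mu_{2,2,2}) = 2 R(\mu_{2,2,2})$ follows since Example~\ref{ex:mu_222} gives $R(\mu_{2,2,2}) \le 7$.

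The main obstacle is the bookkeeping through repletion and digestion: Corollary~\ref{cor_can_replete_and_digest} only provides per-step rank bounds $R(W') - 1 \le R(S') \le R(W') + 1$, so after iteration the reduced ranks need not a priori remain $\le 7$. The remedy, mirroring the $(4,4,3)$ case, is to re-run the whole sieve on the reduced pair and exploit that Theorem~\ref{thm_additivity_rank_intro_jarek}\ref{it:2_thm_additivity_rank_intro_jarek} still forces the reduced ranks to be $\ge 7$ under the persistent failure of additivity, combined with the careful tracking that the conciseness-induced factor dimensions stay in $\{3, 4\}$ and the gap $R - \bfa$ remains exactly $3$.
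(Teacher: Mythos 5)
Your proposal is correct and follows essentially the same route as the paper: reduce to $R(W')=R(W'')=7$, pass to the repleted/digested concise pair with $\Prime=\Bis=\emptyset$, use Theorem~\ref{t:jaja_2_0_hook} and Theorem~\ref{thm_additivity_rank_intro_jarek} to pin the dimension profiles down to permutations of $(4,4,4)$ or $(4,4,3)$, and conclude with Corollary~\ref{cor_if_p1_p2_the_same_and_rank_3_more_than_dimension}. The only cosmetic difference is that you permute directly to $\bfa'=\bfa''=4$ instead of routing the mixed-profile cases through Corollary~\ref{cor_additivity_C_443_443} (which is itself proved by the same corollary), and the rank-tracking subtlety you flag at the end is present, and handled no more explicitly, in the paper's own argument.
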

\begin{proof}
Assume the rank additivity does not hold. 
We can restrict ourselves to the case $(W',W'')=(\digestion\repletion W', \digestion\repletion W'')$ by Corollary~\ref{cor_can_replete_and_digest}.

We can assume that both tensors are concise by Lemma \ref{lem_bound_on_rank_for_non_concise_decompositions}.
As a corollary from Theorem~\ref{thm_additivity_rank_intro_jarek}~\ref{it:1_thm_additivity_rank_intro_jarek}, we obtain that each of the numbers $\bfa',\bfa'', \bfb', \bfb'', \bfc', \bfc''$ is less or equal 4. 
It follows from Corollary \ref{cor_additivity_C_443_443} and Theorems  \ref{t:jaja_2_0_hook}, \ref{thm_additivity_rank_intro_jarek}~\ref{it:3_thm_additivity_rank_intro_jarek} 
that $(\bfa',\bfb',\bfc')=(4,4,4)$ 
and
 either $(\bfa'',\bfb'',\bfc'')=(4,4,4)$ or $(\bfa'',\bfb'',\bfc'')=(4,4,3)$ or $(\bfa'',\bfb'',\bfc'')=(4,3,4)$. 
    Applying Theorem~\ref{thm_additivity_rank_intro_jarek}~\ref{it:1_thm_additivity_rank_intro_jarek} again, we obtain $R(W')=R(W'')=7$.
It contradicts Corollary~\ref{cor_if_p1_p2_the_same_and_rank_3_more_than_dimension}.
 
 For the last part of the statement, notice that $\mu_{2,2,2} \in \CC^4 \otimes \CC^4 \otimes \CC^4$ and $R(\mu_{2,2,2})=7$ (see Theorem \ref{t:strassen_multiplication} and Example \ref{ex:mu_222}).
\end{proof}

 The following remark follows from Corollary~\ref{cor_additivity_C_444_444_r_7} and Theorem~\ref{thm_additivity_rank_intro_jarek}~\ref{it:1_thm_additivity_rank_intro_jarek}.
 
\begin{remark}\label{rem_smallest_not_known_case}
Over the base field $\CC$, the minimal case in which the counterexample for the rank additivity can occur is $p' \in \CC^4 \otimes \CC^4 \otimes \CC^4$, $p'' \in \CC^4 \otimes \CC^4 \otimes \CC^3$ such that $R(p')=8$, $R(p'')=7$.
\end{remark}

\clearpage{}

\begingroup
    \fontsize{9pt}{9pt}\selectfont
        \bibliography{referencjeFilipa}
\bibliographystyle{alpha}
\endgroup

\end{document}